\theoremstyle{plain}
\newtheorem{theorem}{Theorem}[section]
\newtheorem{lem}[theorem]{Lemma}
\newtheorem{prop}[theorem]{Proposition}
\newtheorem{defi}[theorem]{Definition}
\newtheorem{remark}[theorem]{Remark}
\def\ad{{\rm ad}}
\def\Ad{{\rm Ad}}
\def\a{{\mathfrak{a}}}
\def\s{{\mathfrak{s}}}
\def\m{{\mathfrak{m}}}
\def\p{{\mathfrak{p}}}    
\def\k{{\mathfrak{k}}}
\def\g{{\mathfrak{g}}}
\def\l{{\mathfrak{l}}}
\def\C{{\mathbb{C}}}
\def\R{{\mathbb{R}}}
\def\nsmallskip{\smallskip\noindent}
\def\bbigskip{\bigskip\bigskip}
\def\nbigskip{\bigskip\noindent}
\def\nmedskip{\medskip\noindent}
\def\buildunder#1#2{\mathrel{\mathop{\kern0pt #2}
\limits_{#1}}}
\def\dds{\frac{d}{ds}{\big |_{s=0}}}
\def\ddt{\frac{d}{dt}{\big |_{t=0}}}
\def\pn{\par\noindent}
\def\sn{\smallskip\noindent}
\def\mn{\medskip\noindent}
\def\bn{\bigskip\noindent}
\begin{document}

\title[The adapted hyper-K\"ahler  structure]{The adapted hyper-K\"ahler structure on the crown domain}

\bbigskip

\author[Laura Geatti]{Laura Geatti}
\author[Andrea Iannuzzi]{Andrea Iannuzzi}

\address{Dipartimento di Matematica,
Universit\`a di Roma  ``Tor Vergata", Via della Ricerca Scientifica 1,
I-00133 Roma, Italy} 
\email{geatti@mat.uniroma2.it, iannuzzi@mat.uniroma2.it}

\thanks {\ \ {\it Mathematics Subject Classification (2010):}  53C26, 32M15, 37J15 }

\thanks {\ \ {\it Key words}: Hyper-K\"ahler manifold, Hermitian symmetric space, crown domain}

\begin{abstract}
Let  $\,\Xi\,$ be the crown domain associated with a non-compact irreducible hermitian symmetric space
$\,G/K$.
We give an explicit description of  the unique $\,G$-invariant {\it adapted hyper-K\"ahler
structure} on $\,\Xi$,$\ $i.$\,$e.$\ $compatible with the adapted complex structure $\,J_{ad}\,$  and 
with the $\,G$-invariant K\"ahler structure of $\,G/K$.
 We also compute invariant potentials of the 
involved K\"ahler metrics  and the associated moment maps.
\end{abstract} 
\maketitle

\centerline{Roma, 5 novembre 2017}


\bigskip
\section{Introduction}

\bigskip
\noindent

 A quaternionic complex structure on a 4n-dimensional real  manifold consists of 
 three complex structures $\,I,\ J,\ K\,$ such that $\,IJK=-Id$.
 It is called  hyper-K\"ahler if there exist 
 2-forms $\,\omega_I$, $\,\omega_J$, $\,\omega_K$  which are 
K\"ahler for  $\,I,\ J,\ K$, respectively, and define the same Riemannian
metric  given by 
$$g (\,\cdot\,,\,\cdot\, )=\omega_I (\,\cdot\,,I\,\cdot\, )=\omega_J(\,\cdot\,,J\,\cdot\,)=
 \omega_K(\,\cdot\,,K\,\cdot\, )\,.$$ 
 A hyper-K\"ahler manifold is holomorphic symplectic with respect to any
 of its complex structures, e.g. the complex symplectic form 
 $\,\omega_J + i \omega_K\,$ is holomorphic with respect to $\,I$.

Let $\,(G/K,\, g_0)\,$ be an irreducible Hermitian symmetric space.
In  \cite{BiGa96a}, O. Biquard and B. Gauduchon  proved that in the compact case the holomorphic cotangent bundle $\,T^*G/K^{1,0}$, endowed with its canonical holomorphic symplectic form $\,\omega^\C_{can}$, carries a unique $\,G$-invariant hyper-K\"ahler metric whose restriction to $\,G/K$, identified with the zero section,
 coincides with~$\,g_0$ (see also \cite{Cal79}).
They also showed that in the non-compact case  such a hyper-K\"ahler metric only exists on an appropriate tubular neighbourhood of
$\,G/K$ in $\,T^*G/K^{1,0}$. 

Identify $\,T^*G/K^{1,0}\cong T^*G/K\,$ with the tangent bundle 
$\,TG/K\,$ via the metric$\,g_0$. For $\,G/K\,$ a classical Hermitian symmetric space,
A. S. Dancer and R. Sz\"oke (\cite{DaSz97}) have shown that the 
 hyper-K\"ahler metric constructed in \cite{BiGa96a}
is determined by $\,\omega^\C_{can}\,$ and the pull-back
of the so-called adapted complex structure $\,J_{ad}\,$  (see \cite{LeSz91} and \cite{GuSt91})
 via a  $\,G$-equivariant fiber preserving diffeomorphism of the tangent bundle $\,TG/K$.
This suggests that, on the maximal domain of existence of $\,J_{ad}$, 
there exists a $\,G$-invariant hyper-K\"ahler  structure which includes $\,J_{ad}$.

In this paper we consider an arbitrary non-compact Hermitian symmetric space $\,G/K.$
We regard the maximal domain of existence of the adapted complex structure as a 
$\,G\,$-invariant domain $\,\Xi\,$  in the Lie group complexification $\,G^\C/K^\C$ of $\,G/K$,
where $\,J_{ad}\, $ coincides with the complex structure of $\,G^\C/K^\C$ (cf. \cite{BHH03}).
In the literature $\,\Xi\,$ is referred to as the crown domain associated with~$\,G/K$.

We show that indeed $\,\Xi\,$ admits a unique  $\,G$-invariant {\it 
adapted hyper-K\"ahler  structure},
i.e.  such that $\,J=J_{ad}\,$ and the  restriction  of
$\,(I,\, \omega_I)\,$ to $\,G/K\,$ coincides with 
the K\"ahler structure  defined by  $\,g_0$.
The adapted hyper-K\"ahler  structure coincides with the pull-back of the hyper-K\"ahler  structure determined by O. Biquard and P. Gauduchon. However, it satisfies different   initial conditions
and its uniqueness does not follow from their arguments.
Moreover, from the condition $\,J=J_{ad}\,$  it is easy to deduce  that the  forms $\,\omega_I\,$ and
$\,\omega_K\,$ are   locally $\,G^\C$-invariant     (Lemma 7.2), a fact which was not evident from the previous  investigations. 

For all the quantities involved in the adapted hyper-K\"ahler  structure, we provide  explicit formulas  in Lie theoretical terms. 
In the case of $\,G/K\,$  compact, one can adopt
 a similar strategy to obtain a  unique invariant adapted hyper-K\"ahler  structure on the whole complexification $\,G^\C/K^\C \cong TG/K$.
 
In order to state our main result we need to fix some notation.
Let $\,\k \oplus \p\,$ be the Cartan decomposition of the Lie algebra $\,\g\,$ of $\,G\,$
with respect to $\,K$.
Let $\,\a\,$ be a maximal abelian subalgebra  of $\,\p\,$ and denote 
by $\,\Sigma\,$ the associated restricted root system.  
The  crown domain associated with $\,G/K\,$ in $\,G^\C/K^\C\,$ is by definition 
$$\,\Xi =G \exp i \Omega K^\C/K^\C\,,$$
where
$\textstyle \Omega:=\{ H \in \a \ : \ |\alpha(H)|<{\pi\over 2}, ~\forall \alpha\in\Sigma \}\,$
is the  cell defined by D. N. Akhiezer and S. G. Gindikin in  \cite{AkGi90}.
The closed subset $\,\exp i \Omega K^\C/K^\C\,$ is a $\,G$-slice of~$\,\Xi$.

Let $\,I_0\,$ be the $\,G$-invariant  complex structure  of $\,G/K$. On $\,\p \cong T_{eK} G/K$, it coincides  with the adjoint action of a central 
 element of $\,\k\,$ (see (\ref{CENTER})). 
 Its $\,\C$-linear extension to $\,\p^\C\,$ is also denoted by $\,I_0$.
 The conjugation with respect to $\,\p$ of an element $\,Z\,$ in $\,\p^\C\,$
is indicated  by $\,\overline Z$.

The Killing form of $\,\g^\C$, as well as its
restrictions to $\,\p^\C\,$ and to $\,\p$, is denoted by $\,B$. 
 The  standard $\,G$-invariant K\"ahler structure  $\,(I_0, \omega_0)\,$  on
 $\,G/K\,$ is uniquely determined by its
restriction to $\,\p\,$, namely 
 $\, \omega_0(\, \cdot \,,\, \cdot \,)=B(I_0 \,\cdot\,, \, \cdot\,)$.
Finally, 
for $\,z \in G^\C/K^\C\,$ and  $\,Z\in \g^\C\,$, let 
$$\textstyle
\widetilde Z_z:=\dds \exp(sZ) \cdot z $$
be  the vector field induced by the holomorphic $\,G^\C$-action on $\,G^\C/K^\C$.
Our main  result is as follows.

\bigskip
\noindent
{\bf Theorem.}
{\sl   Let $\,G/K\,$ be an irreducible, non-compact Hermitian symmetric space
endowed with its standard $\,G$-invariant K\"ahler structure, and let $\,\Xi\,$ be the associated crown domain.
There exists a unique  $\,G$-invariant adapted hyper-K\"ahler structure $\,(I,J,K, \omega_I, \omega_J, \omega_K)\,$
on
$\,\Xi$, i.e.  such that $\,J=J_{ad}\,$ and the 
K\"ahler structure $\,(I,\, \omega_I)\,$ coincides with $\,(I_0,\, \omega_0)\,$
when restricted to $\,G/K$. 

\mn
 $\bf (a)$ The symplectic $\,J$-holomorphic form
$\,\omega_I-i \omega_K\,$ is the restriction of a $\,G^\C$-invariant form on $G^\C/K^\C$ 
and  is uniquely determined by $$\,(\omega_I-i \omega_K)(Z,W)=B(I_0Z, \, W)\,,$$
for  $\,Z,\,W \in \p^\C \cong T_{eK^\C}G^\C/K^\C$.

\mn
 $\bf (b)$
For $\,z=aK^\C\,$ on the $\,G$-slice $\,\exp i \Omega K^\C/K^\C\,$ of $\,\Xi$, the $\,G$-invariant complex structure $\,I\,$ is
  given by
$$\,I \widetilde Z_z= \widetilde {\overline {I_0Z}}_z\,.$$

Let $\,\{A_1, \cdots, A_r\}\,$ be  the  basis 
 of $\,\a\,$ defined in  (\ref{NORMALIZ1}) and (\ref{NORMALIZ2}) of Section \ref{PRELIMINARIES}
 and $\,C:=B(A_1,\,A_1)= \dots = B(A_r,\,A_r)$.
Write $\,a = \exp iH$, where  $\,H=  \sum_{j=1}^r t_jA_j$.

 \mn
 $\bf (c)$
Let $\,\rho_0 \,$ be  a potential of $\,\omega_0\,$ and  let $\,p:\Xi \to G/K\,$
 the $\,G$-equivariant projection given by $\,p(gaK^\C)=gK$.
A potential of $\,\omega_I\,$ is given by $\,\rho_0 \circ p +\rho_I$,
where the $\,G$-invariant  function $\,\rho_I\,$ is defined by
$$\,\textstyle \rho_I(gaK^\C):=  -\frac{C}{4}\sum_j f_I(2t_j)\,,$$
with $\,f_I\,$  a real valued function satisfying $\,\frac{\sin x}{\cos x} f_I'(x)= \cos x -1$.

 \mn
 \noindent
 $\bf (d)$
A $\,G$-invariant potential of $\,\omega_J\,$ is given by
$$\, \textstyle \rho_J(ga K^\C):=-\frac{C}{4}\sum_{j=1}^r \cos (2t_j)\,.$$
The  moment map $\,\mu_J: \Xi \to \g^*\,$  associated with $\,\rho_J\,$  is given by
 $$\,\textstyle \mu_J(gaK^\C)(X)=B(\Ad_{g^{-1}}X, \Psi(H))\,,$$ where $\,\Psi(\sum_{j=1}^r t_jA_j)=
\frac{1}{2}\sum_{j=1}^r \sin (2t_j)A_j$.}

\bigskip


We sketch the strategy of the proof. If such a $\,G$-invariant hyper-K\"ahler  structure exists, 
then the forms  $\,\omega_I\,$ and $\,\omega_K\,$ are
necessarily restrictions of $\,G^\C$-invariant forms on $\,G^\C/K^\C$
(Lemma \ref{OLOSIMPLECTIC}).
It follows that they coincide with the forms given in (a) (Rem.$\,$\ref{INVARIANTIK}).
A standard argument also shows that they are closed (Lemma \ref{FORMS2}(iii)).

The forms  $\,\omega_I\,$, $\,\omega_K$, the complex structure $\,J=J_{ad}$,
and the  almost complex structure $\,I\,$
defined in (b),
determine  a $\,G$-invariant quaternionic  almost complex structure. 
Then, by a result of N. J. Hitchin,
the integrability of $\,I\,$ and $\,K:=IJ\,$  follows from the closeness
of  $\,\omega_J(\,\cdot\,,\,\cdot\,):=\omega_I(J\,\cdot\,,I\,\cdot\,)\,$  
(\cite{Hit87}, Lemma 6.8). This property is proved 
by showing
 that  the $\,G$-invariant function $\,\rho_J\,$ defined in (d) is a potential  of  $\,\omega_J\,$
by means of restricted root theory and moment map techniques (Prop.~\ref{POTENTIAL}). 
As a result, 
$\,(\,I,\ J,\ K\,,\,\omega_I,\,\omega_J,\,\omega_K)\,$ is a $\,G$-invariant adapted hyper-K\"ahler  structure, as claimed.
A  similar strategy is used to obtain the 
$\,G$-invariant potential $\,\rho_I\,$ of $\,\omega_I-p^*\omega_0\,$ indicated in (c)
(Prop. \ref{POTENTIALII}). Such potential
is expressed in terms of a real function $\,f_I\,$ satisfying 
a  simple trigonometric differential equation
(cf. \cite{BiGa96a}, Thm.$\,$1). A proof of uniqueness of the
adapted hyper-K\"ahler  structure
 is outlined in Section \ref{PROOF}.
In the case of $\,G=SL_2(\R)$, all details  are given in Appendix A. 

As a further application of the above techniques, we also provide
a Lie theoretical formulation of the pull-back to $\,\Xi\,$ 
of the canonical real symplectic form on the cotangent
bundle $\,T^*G/K$ (Appendix B).

 \medskip
The exposition is organized as follows. 
In Section 2 we collect the basic facts which are needed in the sequel.
In Section 3 we introduce the (almost) complex structure $\,I$. 
In Section 4 we express in a Lie theoretical fashion the inverse of the 
$G$-equivariant diffeomorphism introduced in \cite{DaSz97}.
This leads to a useful expression of $\,I\,$ which is exploited
in the computation of $\,2i\partial \bar \partial_J\rho_J$.
In Section 5 we introduce the forms $\,\omega_I,\,\omega_J$, $\,\omega_K\,$
and study their basic properties.
In Section 6 we  show that the   $\,G$-invariant function $\,\rho_J\,$
is a potential of  $\,\omega_J\,$ and compute the associated moment map.
In Section 7 we prove the main theorem by assembling the results obtained in 
other sections. 
In Section 8 we  show that the   $\,G$-invariant function $\,\rho_I\,$
is a potential of  $\,\omega_I-  p^* \omega_0$.


\medskip
\section{Preliminaries}
\label{PRELIMINARIES}

\medskip
Let $\,\g\,$ be a non-compact semisimple Lie algebra and let $\,\k\,$ be a maximal compact subalgebra of $\,\g\,$.
Denote by  $\,\theta\,$  the Cartan involution of $\,\g\,$ with respect to $\,\k$, with Cartan decomposition $\,\g=\k\oplus\p$.
Let  $\,\a\,$ be a maximal abelian subspace in $\,\p$. The dimension of $\,\a\,$ is by definition the {\it rank} of $\,G/K$.
The adjoint action of $\,\a\,$   decomposes $\,\g\,$  as
 $$\g= \a\oplus \m\oplus\bigoplus_{\alpha\in\Sigma}\g^\alpha,$$
 where $\,\m\,$ is the centralizer of $\,\a\,$ in $\,\k$,
 the joint eigenspace 
 $\,\g^\alpha=\{X\in\g~|~ [H,X]=\alpha(H)X, {\rm \ for\  every \ } H\in\a\}\,$ is the $\,\alpha$-restricted root space and  $\,\Sigma\,$ consists of those  $\,\alpha\in\a^*\,$ for which
 $\,\g^\alpha\not=\{0\} $. Denote by $B$ the Killing form of $\g$, as well as its holomorphic extension to $\g^\C$ (which coincides with the Killing form of $\g^\C$).
  
For $\,\alpha\in\Sigma$, consider the $\,\theta$-stable  space $\,\g[\alpha]:=\g^\alpha\oplus  \g^{-\alpha}$, and denote by $\,\k[\alpha]\,$ and
$\,\p[\alpha]\,$ the projections of $\,\g[\alpha]\,$ onto $\,\k\,$ and $\,\p$, respectively. 
 Then  
\begin{equation}\label{DECO}\k=\m \oplus  \bigoplus_{\alpha \in \Sigma^+} \k[\alpha]\, \qquad 
{\rm and} \qquad \p= \,\a  \oplus \bigoplus_{\alpha \in \Sigma^+} \p[\alpha] \, \end{equation}
are $\,B$-orthogonal decompositions of $\,\k\,$ and $\,\p$, respectively.
\bigskip
\begin{lem}
\label{BASIS}
Every element $\,X\,$ in $\,\p\,$ decomposes in a unique way  as 
 $$X_\a + \textstyle \sum_{\alpha \in \Sigma^+} P^\alpha,$$
where $\,X_\a\in \a\,$ and    $\,P^\alpha\in \p[\alpha]$. 
The vector $\,P^\alpha\,$ can be written uniquely as  $\,P^\alpha=X^\alpha-\theta X^\alpha$, where
$\,X^\alpha\,$ is the component of $\,X\,$ in the root space $\,\g^\alpha$.
Moreover,  $[H,P^\alpha]=\alpha(H)K^\alpha$, where $\,K^\alpha\,$ is the element in $\,\k[\alpha]\,$ uniquely defined by $\,K^\alpha=X^\alpha+\theta X^\alpha$.
\end{lem}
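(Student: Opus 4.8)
The plan is to read the first assertion straight off the $\,B$-orthogonal decomposition (\ref{DECO}) of $\,\p\,$ and then to identify the summands $\,P^\alpha\,$ explicitly in terms of the restricted root spaces. Since $\,\p=\a\oplus\bigoplus_{\alpha\in\Sigma^+}\p[\alpha]\,$ is a direct sum, every $\,X\in\p\,$ has a unique component $\,X_\a\in\a\,$ and unique components $\,P^\alpha\in\p[\alpha]$; this already yields the existence and uniqueness claimed in the first sentence. It then remains to match each $\,P^\alpha\,$ with the root-space data of $\,X\,$ and to compute the bracket.

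For the second assertion I would first record the elementary fact that $\,\theta\g^\alpha=\g^{-\alpha}$: if $\,Y\in\g^\alpha\,$ and $\,H\in\a$, then, using $\,\theta H=-H\,$ and that $\,\theta\,$ is an automorphism, $\,[H,\theta Y]=-\theta[H,Y]=-\alpha(H)\theta Y$, so $\,\theta Y\in\g^{-\alpha}$. Next I write the restricted root decomposition of $\,X$, letting $\,X^\alpha\,$ denote its component in $\,\g^\alpha$. Since $\,X\in\p\,$ satisfies $\,\theta X=-X$, comparing the $\,\g^{-\alpha}$-components of $\,X\,$ and of $\,-\theta X=X\,$ gives $\,X^{-\alpha}=-\theta X^\alpha$. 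Hence the part of $\,X\,$ lying in $\,\g[\alpha]=\g^\alpha\oplus\g^{-\alpha}\,$ equals $\,X^\alpha-\theta X^\alpha$, which is $\,\theta$-anti-invariant and therefore lies in $\,\p[\alpha]$; by the uniqueness already established this is exactly $\,P^\alpha$. Uniqueness of the expression $\,P^\alpha=X^\alpha-\theta X^\alpha\,$ follows because $\,X^\alpha\,$ is the unique $\,\g^\alpha$-component of $\,X\,$ and the map $\,Y\mapsto Y-\theta Y\,$ is injective on $\,\g^\alpha\,$ (a kernel element would lie in $\,\g^\alpha\cap\g^{-\alpha}=\{0\}$).

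The last assertion is then a direct computation. For $\,H\in\a\,$ one has $\,[H,X^\alpha]=\alpha(H)X^\alpha\,$ and, since $\,\theta X^\alpha\in\g^{-\alpha}$, also $\,[H,\theta X^\alpha]=-\alpha(H)\theta X^\alpha$; subtracting gives $\,[H,P^\alpha]=\alpha(H)(X^\alpha+\theta X^\alpha)=\alpha(H)K^\alpha\,$ with $\,K^\alpha:=X^\alpha+\theta X^\alpha$. This $\,K^\alpha\,$ is $\,\theta$-invariant, hence lies in $\,\k$, and belongs to $\,\g[\alpha]$, so $\,K^\alpha\in\k[\alpha]$; it is uniquely determined by the displayed formula. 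I do not expect a genuine obstacle here, as the whole statement is formal restricted root theory: the only step needing care is the bookkeeping of the $\,\theta$-action on the root spaces, namely the identity $\,X^{-\alpha}=-\theta X^\alpha$, on which both the expression for $\,P^\alpha\,$ and the bracket computation rest.
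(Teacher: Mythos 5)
Your argument is correct and is precisely the standard restricted-root-theory computation the paper has in mind when it dismisses this lemma as ``an easy exercise'': the identity $X^{-\alpha}=-\theta X^\alpha$ for $\theta$-anti-invariant $X$, the resulting identification of the $\g[\alpha]$-component with $X^\alpha-\theta X^\alpha\in\p[\alpha]$, and the bracket computation all check out. Nothing is missing.
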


\begin{proof} The proof of this lemma is an easy exercise.\end{proof}

The restricted root system of a Lie algebra $\,\g\,$  of Hermitian type is either of type 
$\,C_r\,$ (if $\,G/K\,$ is of tube type) or of type $\,BC_r\,$ (if $\,G/K\,$ is not of tube type), i.e.  there exists a basis $\,\{e_1,\ldots,e_r\}\,$ of $\,\a^*\,$ for which   $\,\Sigma =\Sigma^+
\cup-\Sigma^+$, with
$$\Sigma^+=\{2e_j, ~1\le j\le r,~~e_k\pm e_l,~ 1\le k< l\le r\},\quad \hbox{ for type
$\,C_r$}, $$
$$\Sigma^+=\{e_j,~2e_j,~1\le j\le r,~~e_k\pm e_l,~~1\le k<l\le r\},\quad \hbox{ for type
$\,BC_r$}\,.$$
With the above choice of a positive system $\,\Sigma^+$, the roots
$$\lambda_1:=2e_1,\,\dots\,,\,\lambda_r:=2e_r \,$$
form a maximal set of  long strongly orthogonal positive restricted  roots (i.e. such that  $\,\lambda_k\pm \lambda_l\not\in \Sigma $, for $\,k\not=l$).

For every $\,j=1,\ldots,r$, the root space   $\, \g^{\lambda_j}\,$ is one-dimensional.
Fix $\,\ E^j \in \g^{\lambda_j}\,$ such that the $\,\s \l (2)$-triples 
$\,\{E^j,~\theta  E^j,~ A_j:=[\theta  E^j,\, E^j]\}\,$
are normalized as follows  
\begin{equation}\label{NORMALIZ1}
[ A_j,\, E^j]=2  E^j, \quad \hbox{for}\quad j=1,\ldots,r. \end{equation}
The vectors $\,\{ A_1,\ldots, A_r\} \,$ form  a $\,B$-orthogonal basis of $\,\a\,$ and 
\begin{equation}\label{NORMALIZ2}
[ E^k,\, E^l]=[ E^k,\,\theta  E^l]=0,\quad [ A_k,\, E^l]=\lambda_l(A_k) E^l=0, \quad {\rm for}~k\not=l\, .
\end{equation}
For $j=1,\ldots,r$, define 
\begin{equation} \label{KJPJ} K^j:=E^j+\theta E^j\quad\hbox{ and}\quad  P^j:=E^j-\theta E^j.\end{equation}

Denote by $\,I_0\,$ the $\,G$-invariant  complex structure  of $\,G/K$. On $\,\p \cong T_{eK} G/K$, it coincides 
with the adjoint action of the element $\,Z_0 \in Z(\k)\,$ given by 
\begin{equation}
\label{CENTER}
 \textstyle Z_0=S+\frac{1}{2} \sum_{j=1}^r K^j \,,
 \end{equation}
for some element   $\,S\in\m\,$  (see Lemma 2.4 in \cite{GeIa13}). 
The complex structure $\,I_0\,$ permutes the blocks of the decomposition (\ref{DECO}) of~$\,\p$.
Indeed, from the normalizations (\ref{NORMALIZ1}) and  (\ref{NORMALIZ2}), one sees that 
\begin{equation} \label{CPLX0}
I_0P^j =[Z_0,\,P^j]=A_j\qquad {\rm and} \qquad I_0A_j=[Z_0,\,A_j]=-P^j \,, 
\end{equation}
for $\,j=1,\ldots,r$. 
In particular $\,I_0\a=\bigoplus_{j=1}^r \p[\lambda_j].$
Moreover, one can easily check that 
\begin{equation}
\label{CPLXBIS}
\quad I_0\p[e_k+e_l]=\p[e_k-e_l]\,, \quad \quad \quad I_0 \p[e_j]=\p[e_j] \  \hbox{(non-tube case).}
 \end{equation}

For $\,a=\exp iH$, with $\,H \in \a$, define a $\,\C$-linear operator  $ \,F_a :\p^\C \to \p^\C\, $  by 
\begin{equation}
\label{EFFEDEF}
F_a := \pi_\# \circ \Ad_{a^{-1}}|_{\p^\C}\,,
 \end{equation}
 where $\, \pi_\# :\g^\C \to \p^\C\,$ be the linear projection  along $\,\k^\C$.
 One easily checks that  
\begin{equation}\label{EFFE}F_aA =A \qquad {\rm and} \qquad F_aP^\alpha=\cos\alpha(H)P^\alpha\,, \end{equation}
for all $\,A\in\a\,$ and $\,P^\alpha\in \p[\alpha]$. In particular, for every $\,H \in \Omega $, the operator
$\,F_a\,$ is an isomorphism and  $\,F_a(\p)=\p$.

For $\,z \in G^\C/K^\C\,$ and  $\,Z\in \g^\C$, let 
\begin{equation}\label{TILDEFIELDS}
\textstyle
\widetilde Z_z:=\dds \exp(sZ) \cdot z \end{equation}
be the vector field induced by the holomorphic $\,G^\C$-action on $\,G^\C/K^\C$.
For $\,z=aK^\C\,$  on the slice of $\,\Xi\,$ and $\,Z\in\p^\C$,  one has \begin{equation}\label{TILDEVSHAT}\widetilde Z_z = a_*F_aZ ,\qquad 
a_*Z=\widetilde{F_a^{-1} Z}_z. \end{equation}

Denote by $\,\Xi'\,$ the $\,G$-invariant  subdomain of $\,\Xi\,$
defined by
\begin{equation}\label{REGULAR}
\Xi':=G \exp i\Omega' K^\C\,,
\end{equation}
where
$\,\Omega' :=\{ H \in \Omega  \ :\ \alpha(H)\not=0, ~\forall \alpha\in
\Sigma\}\,$
is the regular subset of $\,\Omega $.
Note that $\,\Omega' \,$ is dense in $\,\Omega \,$ and $\,\Xi'\,$ is dense in $\,\Xi$.

Later on, in the computation of the potentials of the various K\"ahler forms, we need the  identities   
contained in the next lemma. 

 \bigskip
\begin{lem}
\label{CURVESSTA} 
Fix   $\,z=aK^\C$, with $\,a= \exp iH\,$ and $\,H \in \Omega' $.  Decompose $\,Y\in \p$ as $\,Y=Y_\a + \textstyle\sum_\alpha
Q^\alpha\,$, where $\,Y_\a\in\a\,$ and $\,Q^\alpha= Y^\alpha- \theta Y^\alpha \in\p[\alpha]\,$ (see  Lemma \ref{BASIS}). Then
 \begin{itemize} 
 \item[(i)]   $\  \widetilde {iY}_z=
\dds \exp sC \exp i(H+sY_\a) K^\C\,,$ 
where 
$$\,C:=- \textstyle  \sum_\alpha
\frac {\cos \alpha(H)}{\sin \alpha(H)}
  K^\alpha \,\qquad \hbox{and }  \qquad\,K^\alpha:=Y^{\alpha}+\theta Y^{\alpha}{\it .}$$

 \item[(ii)]     $\ \widetilde{iQ^\alpha}_z = a_*iF_aQ^\alpha=
   - \frac{\cos \alpha(H)}{\sin \alpha(H)}\widetilde{K^\alpha}_z$.
 
 \item[(iii)]     $\widetilde{K^\alpha}_z= -{\sin \alpha(H)} a_*iQ^\alpha$.

\end{itemize}
\end{lem}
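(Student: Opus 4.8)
The plan is to reduce all three identities to one elementary computation of $\,\Ad_{a^{-1}}\,$ on the vectors $\,K^\alpha=Y^\alpha+\theta Y^\alpha\,$ and $\,Q^\alpha=Y^\alpha-\theta Y^\alpha$, together with the standard identification $\,T_{eK^\C}G^\C/K^\C\cong\p^\C$, under which $\,\dds\exp(sW)K^\C=\pi_\# W\,$ for $\,W\in\g^\C\,$ and the left translation $\,a_*\,$ carries $\,\p^\C\cong T_{eK^\C}\,$ to $\,T_z$.

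I would first establish (iii), from which (ii) and (i) then follow. As $\,Y^\alpha\in\g^\alpha\,$ and $\,\theta Y^\alpha\in\g^{-\alpha}$, and $\,a=\exp iH$, we have $\,\Ad_{a^{-1}}Y^\alpha=e^{-i\alpha(H)}Y^\alpha\,$ and $\,\Ad_{a^{-1}}\theta Y^\alpha=e^{i\alpha(H)}\theta Y^\alpha$. Expanding $\,e^{\pm i\alpha(H)}=\cos\alpha(H)\pm i\sin\alpha(H)\,$ gives
$$\Ad_{a^{-1}}K^\alpha=\cos\alpha(H)\,K^\alpha-i\sin\alpha(H)\,Q^\alpha\,.$$
Applying $\,\pi_\#\,$ annihilates the $\,\k^\C$-summand $\,K^\alpha$, so $\,\pi_\#\Ad_{a^{-1}}K^\alpha=-i\sin\alpha(H)\,Q^\alpha$. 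Writing $\,\exp(sK^\alpha)a=a\exp(s\Ad_{a^{-1}}K^\alpha)\,$ and pushing forward by $\,a\,$ yields
$$\widetilde{K^\alpha}_z=a_*\big(\pi_\#\Ad_{a^{-1}}K^\alpha\big)=-\sin\alpha(H)\,a_*iQ^\alpha\,,$$
which is (iii). The regularity hypothesis $\,H\in\Omega'\,$ ensures $\,\sin\alpha(H)\neq0$, so this relation can be inverted.

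Identity (ii) is then immediate: its first equality is (\ref{TILDEVSHAT}) applied to $\,iQ^\alpha\in\p^\C$, combined with the $\,\C$-linearity of $\,F_a\,$ and (\ref{EFFE}), giving $\,\widetilde{iQ^\alpha}_z=a_*iF_aQ^\alpha=\cos\alpha(H)\,a_*iQ^\alpha$; substituting $\,a_*iQ^\alpha=-\frac{1}{\sin\alpha(H)}\widetilde{K^\alpha}_z\,$ from (iii) produces the second equality. For (i) I would split $\,iY=iY_\a+\sum_\alpha iQ^\alpha\,$ and use linearity of $\,Z\mapsto\widetilde Z_z$. The abelian part gives $\,\widetilde{iY_\a}_z=a_*iY_\a=\ddt\exp i(H+tY_\a)K^\C\,$ (using $\,F_aY_\a=Y_\a\,$ and that $\,\a\,$ is abelian), while (ii) turns the root part into $\,\sum_\alpha\widetilde{iQ^\alpha}_z=\widetilde C_z\,$ with $\,C\,$ as in the statement. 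Finally, differentiating the curve $\,s\mapsto\exp(sC)\exp i(H+sY_\a)K^\C\,$ at $\,s=0\,$ via the product rule gives exactly $\,\widetilde C_z+\widetilde{iY_\a}_z=\widetilde{iY}_z$, which is (i).

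I expect no serious obstacle: the single nontrivial input is the trigonometric expansion of $\,\Ad_{a^{-1}}\,$ above. The only point requiring care is the bookkeeping of the identifications, namely recording that $\,K^\alpha\,$ enters through the full $\,\g^\C$-action (hence via $\,\pi_\#\Ad_{a^{-1}}$) whereas $\,Q^\alpha\,$ enters as a tangent vector in $\,\p^\C\cong T_{eK^\C}$, and keeping track of where $\,a_*\,$ is applied.
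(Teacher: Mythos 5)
Your proof is correct and rests on the same key computation as the paper's, namely that $\pi_\#\Ad_{a^{-1}}K^\alpha=-i\sin\alpha(H)\,Q^\alpha$; you merely run the logic in the opposite order, establishing (iii) first and assembling (i) from it via the product rule, whereas the paper proves (i) directly (determining the coefficients $c_\alpha$ via Campbell--Hausdorff) and reads off (ii) and (iii) as special cases. This is an organizational difference only, not a different method.
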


\smallskip
\begin{proof} 
(i) By (\ref{TILDEVSHAT}) one has
 \begin{equation}\label{TILDEQ}\widetilde {iY}_z=a_*F_aiY=a_* \big (
 iY_\a+\textstyle \sum_\alpha \cos \alpha(H)  iQ^\alpha \big ) \,.\end{equation}
On the other hand, for $K^\alpha=Y^\alpha+\theta Y^\alpha$  and 
$\,C=\sum_\alpha c_\alpha K^\alpha$, Campbell-Hausdorff formula  yields
  $$\exp sC \exp i(H+sY_\a) K^\C=a\exp sAd_{a^{-1}}C\exp siY_\a  K^\C=$$
 $$ =a\exp(sAd_{a^{-1}}C+siY_\a+{s^2\over 2}[Ad_{a^{-1}}C,iY_\a]+\ldots)K^\C.$$
Differentiating the above expression at  $\,0$, one obtains
$$ 
\textstyle 
a_*\pi_\# (Ad_{a^{-1}}C+iY_\a)=a_* \big(iY_\a-\sum_\alpha c_\alpha\sin\alpha(H) iQ^\alpha
\big ).$$
Then the required identity follows by taking
 $ \,c_\alpha=- \frac {\cos \alpha(H)}{\sin \alpha(H)}.$ 

\sn
(ii) This is a  special case of (i).

\sn
(iii)  By setting $\,C=K^\alpha\,$ and 
$\,Y_\a=0\,$ in the  proof of (i), one obtains

\noindent
\smallskip
\centerline{$
\textstyle 
\widetilde {K^\alpha}_z=a_*\pi_\# Ad_{a^{-1}}K^\alpha=-a_* \sin\alpha(H) iQ^\alpha\,.$}
\end{proof}


\section{The complex structure $I$}
\label{I}
\bigskip
In this section we introduce a new $\,G$-invariant almost complex structure $\,I\,$ on $\,\Xi$. Its integrability will   be  settled  in Section \ref{PROOF}.
Eventually, $\,I$, $\,J=J_{ad}\,$ and $\,K:=IJ\,$ will be the three complex structures of our hyper-K\"ahler structure on~$\,\Xi$. 


\bigskip
\begin{defi}
\label{CPLX} For $\,gaK^\C\,$ in $\,\Xi\,$ and $\,Z \in \p^\C\,$ 
the $\,G$-invariant  (almost) complex structure $\,I\,$ is defined by
$$\,I g_*\widetilde Z_{aK^\C}= g_*\widetilde{ \overline {I_0Z}}
_{aK^\C}\,. $$
 \end{defi}
\nmedskip

We claim  that the above definition is well posed. Suppose that  $\,z=gaK^\C=g'a'K^\C\,$, for some $\,g,\,g'\in G\,$ and
$\,a,a'\in\exp i\Omega $,  and that $\,g_*\widetilde Z_{aK^\C}=g'_* \widetilde U_{a'K^\C}$, for some
$\,Z,\,U\in \p^\C$. This is equivalent to $\,g=g'wk\,$ and $\,a=w^{-1}a'w$, for some $\,w\in N_K(\a)\,$ and $\,z\in Z_K(a)\,$ (see \cite{KrSt05}, Prop.4.1), and $\,Ad_{wk}Z = U$. 
Then
$$ I g'_*\widetilde U_{a'K^\C}=g'_*\widetilde{\overline{I_0 U}}_{a'K^\C}= g'_* \widetilde{\overline{I_0 Ad_{wk}Z}}_{wa w^{-1}K^\C} =g'_*(wk)_* \widetilde{\overline{ I_0 Z}}_{a K^\C}=$$
 $$=g_*\widetilde{\overline{I_0 Z}}_{ a  K^\C}=  Ig_* \widetilde Z_{a K^\C},$$
as claimed.

By equations (\ref{TILDEVSHAT}) one   has  
\begin{equation}
\label{CPLXONSLICE}
 \,Ia_*Z=I\widetilde{F_a^{-1}Z}_z= \widetilde{ \overline {I_0F_a^{-1}Z}}_z=
 a_*F_aI_0F_a^{-1} \overline Z\,
\end{equation}
 for every $\,z=aK^\C\,$ on the slice of $\,\Xi$. 
 In particular,  for $\,\alpha \in \Sigma^+ \cup \{0\}\,$ and   $\,P^\alpha \in \p[\alpha]\,$ with
 $\,I_0P^\alpha \in \p[\beta]$, one has 
$$\textstyle I a_*P^\alpha= a_*\frac{\cos \beta(H)}{\cos \alpha(H)} I_0P^\alpha\,.$$

 From Definition \ref{CPLX} it is also clear that $\,I^2=-Id\,$ and $\,IJ=-JI$.
Then, by defining $\,K:=IJ$, one obtains a quaternionic (almost) complex structure
$\,(I,\,J,\,K)\,$ on $\,\Xi$.


\bigskip
\begin{prop}
\label{OLOMAP}
The $\,G$-equivariant projection $$p: \Xi \to G/K\,\quad \quad  gaK^\C \to gK\,,$$ is holomorphic with respect to 
the $\,G$-invariant  complex structures $\,I\,$ on $\,\Xi\,$ and
$\,I_0\,$ on $\,G/K$.
\end{prop}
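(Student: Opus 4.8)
The plan is to verify the holomorphy condition $\,p_* \circ I = I_0 \circ p_*\,$ directly on the $\,G$-slice $\,\exp i\Omega K^\C\,$ of $\,\Xi$, and then use $\,G$-equivariance of $\,p\,$ together with the $\,G$-invariance of both complex structures to extend the identity to all of $\,\Xi$. Since $\,p(gaK^\C)=gK\,$ intertwines the $\,G$-actions, it suffices to check holomorphy at the slice points $\,z=aK^\C\,$ with $\,a=\exp iH$, $\,H\in\Omega$, and at such a point every tangent vector is of the form $\,a_*Z\,$ for some $\,Z\in\p^\C\,$ by (\ref{TILDEVSHAT}).

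First I would compute $\,p_*(a_*Z)\,$ for $\,Z\in\p^\C$. Writing $\,a=\exp iH$, the curve $\,s\mapsto a\exp(sZ)K^\C\,$ projects under $\,p\,$ to $\,s\mapsto p(a\exp(sZ)K^\C)$; I expect that the definition $\,p(gaK^\C)=gK\,$ forces $\,p_*\,$ to annihilate the $\,\a$-independent imaginary directions and to record only the ``real part'' of $\,Z\,$ relative to the decomposition of $\,\p^\C=\p\oplus i\p$. The natural guess is that $\,p_* a_* Z\,$ equals (up to the identification $\,\p\cong T_{eK}G/K$) the $\,\p$-component governing the horizontal motion, so that $\,p_*a_*Z = \mathrm{Re}\,Z\,$ or a closely related expression; the precise form will come out of differentiating the projected curve and using that the fiber direction is exactly $\,i\p$. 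The key structural input is the formula (\ref{CPLXONSLICE}), namely $\,Ia_*Z = a_*F_aI_0F_a^{-1}\overline Z$.

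The heart of the argument is then to combine these two computations: I would apply $\,p_*\,$ to both sides of (\ref{CPLXONSLICE}) and show that the resulting vector equals $\,I_0\,$ applied to $\,p_* a_* Z$. Here the operator $\,F_a\,$ acts as the identity on $\,\a\,$ and by scaling $\,\cos\alpha(H)\,$ on each block $\,\p[\alpha]\,$ (by (\ref{EFFE})), and $\,I_0\,$ permutes these blocks according to (\ref{CPLX0}) and (\ref{CPLXBIS}). I anticipate that under $\,p_*\,$ the scaling factors $\,\cos\beta(H)/\cos\alpha(H)\,$ and the conjugation $\,\overline{(\cdot)}\,$ that appear in $\,I\,$ collapse precisely to the action of $\,I_0\,$ on the horizontal part, because $\,p_*\,$ sees only the $\,\p$-direction where conjugation acts trivially and $\,I_0\,$ is block-permuting in a way compatible with its extension to $\,\p^\C$. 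The main obstacle I expect is bookkeeping: correctly tracking how $\,p_*\,$ interacts with the real-and-imaginary splitting of $\,\p^\C\,$ and confirming that the $\,F_a\,$-conjugated, $\,\overline{(\cdot)}$-twisted operator on the left projects down to the untwisted $\,I_0$; this requires being careful that $\,I_0\,$ commutes with the relevant projection and that the cosine factors cancel after projection. Once this identity $\,p_* I a_* Z = I_0 p_* a_* Z\,$ is established on the slice, equivariance closes the argument on all of $\,\Xi$.
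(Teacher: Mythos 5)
Your plan is correct and follows essentially the same route as the paper: reduce to the slice by equivariance, show that $\,p_*\,$ annihilates the $\,i\p$-directions and records only the real part of the tangent vector, and then push the formula for $\,I\,$ forward under $\,p_*$. The one step you defer --- that the fiber direction at $\,aK^\C\,$ is exactly $\,a_*i\p$, i.e. $\,p_*(\widetilde{iY}_z)=0\,$ --- is the crux and is settled in the paper via Lemma \ref{CURVESSTA}(i); note also that $\,p_*a_*Z\,$ comes out as $\,F_a^{-1}{\rm Re}\,Z\,$ rather than $\,{\rm Re}\,Z$, after which the $\,F_a$-factors and the conjugation in (\ref{CPLXONSLICE}) cancel exactly as you predict, since $\,F_a\,$ and $\,I_0\,$ are real operators.
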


\medskip
\begin{proof} Since $\,p\,$ is  $\,G$-equivariant,    it is sufficient  to consider its restriction to   the slice.
 Let $\,Z=X+iY\,$ be an element of $\,\p^\C$, with $\,X,Y\in\p$. We claim that the differential
 $\,p_*\colon T\Xi\to TG/K\,$ at  $\,z=aK^\C$, 
is given by $$p_*(\widetilde Z_z)=p_*(\widetilde{X}_z)+p_*(\widetilde{iY}_z) =X.$$
It is straightforward to check that  $\,p_*(\widetilde X_z)=X $. In order to verify that 
$\,p_*(\widetilde{ iY}_z)=0$, write $\,Y=Y_\a+\sum_\alpha Q^\alpha$, according to Lemma \ref{BASIS}. By  Lemma \ref{CURVESSTA}(i) one has 
$\,\widetilde{iY}_z =\dds\exp sC\exp siY_\a aK^\C$, for an appropriate element $\,C\in \k$.
Then from the definition of $\,p\,$ it follows that $\,p_*(\widetilde{ iY}_z)=0$.
Now for $\,Z \in \p^\C\,$ one has 
$$ p_*(I\widetilde {Z}_z)=p_*(\widetilde {\overline{I_0Z}}_z)= 
p_*(\widetilde {I_0X}_z)- p_*(i\widetilde {I_0Y}_z)=I_0X =I_0p_*(\widetilde {Z}_z),$$
which concludes the proof of the statement.
 \end{proof}


\medskip
\section{The inverse of Dancer-Sz\"oke's deformation vs. the complex structure $I$}
\label{DEFORMATION}
\medskip
In this section we define a $\,G$-equivariant diffeomorphism $\,\psi\,$ 
 of the tangent bundle $\,TG/K\,$ with the property that  our complex structure $\,I\,$ is the pull-back via $\,\psi\,$ of the 
natural  complex structure of the holomorphic cotangent bundle $\,T^*G/K^{1,0} \cong T^*G/K \cong TG/K\,$
 (see also Rem.$\,$\ref{PULLSIMPL}). 
The map $\,\psi\,$ is the inverse of the diffeomorphism introduced in \cite{DaSz97}, Sect.$\,$4.
However, here $\,\psi\,$ and $\,I\,$ are expressed in a Lie theoretical fashion, a fact that 
 will be repeatedly exploited
in the sequel.

By identifying  the tangent bundle $TG/K$ with the homogeneous vector bundle $\,G\times_K\p$, the map $\,\psi\,$ is completely determined  by its restriction $\,\Psi \colon \p \to\p$, namely 
$\,\psi[g,X]=[g,\Psi(X)]$, for $\,g\in G\,$ and $\,X\in\p$. Note that $\,\psi\,$ maps every fiber into itself.

\medskip

 Let $\,Z_0\in Z(\k)\,$ be the element inducing the complex structure $\,I_0\,$ on $\,\p\,$
 and let $\, \pi_\# :\g^\C \to \p^\C\,$ be the linear projection  along $\,\k^\C$.


\begin{lem} 
Let  $\,\Psi:\p \to \p\,$ be the map defined by 
$$\Psi(Y) := -I_0 \circ J \circ \pi_\# \circ Ad_{\exp iY}Z_0\,.$$
Then
\begin{itemize}
\smallskip
\item[(i)]  $\,\Psi\,$  is $\,\Ad_K$-equivariant,
\smallskip
\item[(ii)]  for $\,H=\sum_jt_jA_j\,$ in $\,\a\,$ one has 
$$\Psi(H)=\textstyle {1 \over 2}\sum^r_{j=1} \sin \lambda_j(H) A_j=\textstyle {1 \over 2}\sum^r_{j=1} \sin(2t_j)A_j\,.$$
\end{itemize}
\end{lem}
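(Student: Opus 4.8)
The plan is to verify the two claims about the map $\,\Psi(Y) = -I_0 \circ J \circ \pi_\# \circ \Ad_{\exp iY}Z_0\,$ by working directly with the defining formula. For part (i), the $\,\Ad_K$-equivariance should follow from chasing how each constituent operator transforms under $\,\Ad_k\,$ for $\,k \in K$. First I would observe that $\,Z_0 \in Z(\k)\,$ is $\,\Ad_K$-fixed, so that $\,\Ad_{\exp i(\Ad_k Y)}Z_0 = \Ad_k \Ad_{\exp iY}\Ad_{k^{-1}}Z_0 = \Ad_k \Ad_{\exp iY}Z_0$. Then I must check that $\,\pi_\#\,$, the projection onto $\,\p^\C\,$ along $\,\k^\C$, commutes with $\,\Ad_k\,$ (true since $\,\Ad_k\,$ preserves both the $\,\k^\C$ and $\,\p^\C\,$ summands of the Cartan decomposition), that $\,J\,$ commutes with the $\,G$-action and in particular with $\,\Ad_k\,$ (since $\,J=J_{ad}\,$ is $\,G$-invariant), and finally that $\,I_0 = \ad_{Z_0}\,$ commutes with $\,\Ad_k\,$ because $\,Z_0\,$ is central in $\,\k$. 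Assembling these, one gets $\,\Psi(\Ad_k Y) = \Ad_k \Psi(Y)$, as required.

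For part (ii), the computation is an explicit calculation on the maximal abelian subalgebra $\,\a$. I would take $\,H = \sum_j t_j A_j\,$ and use the strong orthogonality of the roots $\,\lambda_j = 2e_j\,$ together with the normalizations (\ref{NORMALIZ1}) and (\ref{NORMALIZ2}) to decouple the problem into $\,r\,$ commuting $\,\mathfrak{sl}_2$-triples $\,\{E^j, \theta E^j, A_j\}$. Writing $\,Z_0 = S + \frac{1}{2}\sum_j K^j\,$ from (\ref{CENTER}), the central term $\,S \in \m\,$ commutes with all of $\,\a\,$ and so is annihilated once we project onto $\,\p^\C$; the surviving contribution comes from the $\,K^j = E^j + \theta E^j\,$ terms. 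The key is to expand $\,\Ad_{\exp iH}K^j\,$ using the adjoint action restricted to the $\,j$-th $\,\mathfrak{sl}_2$-block, where $\,[H, E^j] = \lambda_j(H)E^j = 2t_j E^j\,$ and $\,[H,\theta E^j] = -2t_j \,\theta E^j$. This yields $\,\Ad_{\exp iH}K^j = \cos(2t_j)\,K^j + i\sin(2t_j)\,P^j\,$ by the standard exponential-of-trigonometric identities for the compact/hyperbolic rotations inside each block (recalling $\,P^j = E^j - \theta E^j$). Projecting along $\,\k^\C\,$ via $\,\pi_\#\,$ kills the $\,K^j\,$ component and retains $\,i\sin(2t_j)P^j$, so $\,\pi_\# \Ad_{\exp iH}Z_0 = \frac{i}{2}\sum_j \sin(2t_j)P^j$.

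It then remains to apply $\,-I_0 \circ J\,$ to this expression. Here I would use that on $\,\p^\C\,$ the operator $\,J = J_{ad}\,$ acts as multiplication by $\,i\,$ on the tangent space identified with $\,\p^\C\,$ (or, more precisely, interacts with the $\,i$-factor so as to strip it off), and that $\,I_0 P^j = A_j\,$ by (\ref{CPLX0}). Carefully tracking the factor of $\,i\,$ and the sign, applying $\,J\,$ to $\,\frac{i}{2}\sum_j \sin(2t_j)P^j\,$ and then $\,-I_0\,$ should produce $\,\frac{1}{2}\sum_j \sin(2t_j)\,I_0 P^j = \frac{1}{2}\sum_j \sin(2t_j)A_j$, which is the asserted formula.

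The main obstacle I anticipate is bookkeeping the interaction between the complex structure $\,J=J_{ad}\,$ on the tangent space of $\,G^\C/K^\C\,$ and the factor of $\,i\,$ arising from $\,\pi_\# \Ad_{\exp iH}Z_0\,$: one must be precise about the identification $\,\p^\C \cong T_{eK^\C}G^\C/K^\C\,$ and about whether $\,J\,$ is being applied to a tangent vector or to an abstract element of $\,\g^\C$, since a sign or factor error here would propagate through the whole formula. The equivariance in (i) is conceptually straightforward once the commutation relations are isolated, so the real care lies in getting the constants right in (ii).
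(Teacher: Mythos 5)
Your proof is correct and follows essentially the same route as the paper: both establish (i) from the centrality of $Z_0$ and the $\Ad_K$-equivariance of the remaining operators, and both prove (ii) by computing $\pi_\#\Ad_{\exp iH}Z_0=\frac{i}{2}\sum_j\sin(2t_j)P^j$ (the paper via the even/odd split of the power series $e^{\ad_{iH}}Z_0$, you via the equivalent per-block $\mathfrak{sl}_2$ rotation formula) and then applying $-I_0\circ J$ with $J$ acting as multiplication by $i$ on $\p^\C$ and $I_0P^j=A_j$. The sign and factor bookkeeping you were worried about comes out exactly as in the paper.
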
 
 
\begin{proof} (i) Since $\,Z_0\,$ lies in the center of $\,\k$, for every $\,k \in K\,$ one has 
$\,Ad_{\exp  i \Ad_k Y}Z_0=Ad_{k\exp  i  Y}Z_0=Ad_k \circ Ad_{\exp  i  Y}Z_0$. Now the statement follows 
from the $\,\Ad_K$-equivariance of all  the remaining maps in the composition
defining $\Psi$.

\sn
(ii) One has
$$Ad_{\exp  iH}Z_0=e^{ad_{ iH}}Z_0=\cos ad_H Z_0 +i\sin ad_H Z_0=$$
$$=\textstyle \sum_{n\ge 0}{{(-1)^n}\over {(2n)!}}ad^{2n}_{H}Z_0+ i
\textstyle \sum_{n\ge 0}{{(-1)^n}\over {(2n+1)!}}ad^{2n+1}_{H}Z_0.$$
Lemma \ref{BASIS} and relations (\ref{CENTER}) and (\ref{CPLX0}),  imply that
$$
\textstyle
ad_{H}^{2n}Z_0= {1\over 2} \textstyle \sum_j \lambda_j^{2n}(H) K^{j},\quad ad_{H}^{2n+1}Z_0=
 {1\over 2}\sum_j \lambda_j^{2n+1}(H) P^j. 
$$
It follows that  the $\,\p^\C$-component of $\,Ad_{\exp  iH}Z_0\,$ is given by
$$
\textstyle
\pi_\#Ad_{\exp  iH}Z_0=   {i\over 2} \textstyle \sum_j   \sin \lambda_j(H) P^j= {i\over 2}\sum_j \sin (2t_j)P^j\,,$$
and
$$-I_0 \circ J \circ \pi_\# \circ Ad_{\exp iH}Z_0 =$$
$$
\textstyle
=-I_0 \circ J \big( {i\over 2}
\sum_j \sin \lambda_j(H)P^j
\big)= {1\over 2} I_0 \sum_j \sin \lambda_j(H)P_j= {1 \over 2}\sum_j \sin (2t_j)A_j,$$
as claimed.
\end{proof}


For $\,a=\exp iH$, with $\,H\in\Omega $, consider the $\,\C$-linear map $\,E_a\colon\p^\C\to \p^\C\,$ uniquely defined  by  $$\,E_a:= \pi_\#\circ  \widetilde E_a|_{\p^\C}, $$  where $\,\widetilde E_a\colon \g^\C\to\g^\C\,$ is given by $ \,\widetilde E_a= \sum_{n \ge 0} \frac{(-1)^{n}}{(n+1)!} \ad^n_{iH}\,$,
and has  the property that (cf. \cite{Var84}, Thm. 2.14.3, p. 108) $\,(\exp_*)_{iH}=(\exp iH)_* \widetilde E_a$.
One can verify that 
\begin{equation} \label{E}E_aA=A,\qquad\qquad E_aP^\alpha= \textstyle \frac {\sin\alpha(H)}{\alpha(H)}P^\alpha,   \end{equation}
for all $A\in\a$ and $P^\alpha\in\p[\alpha]$, with $\alpha\in\Sigma$.

\bigskip
\begin{lem}
\label{PATH} Fix $\,a=\exp iH$, with $\,H \in \Omega' $, and 
 $\,P^\alpha \in \p[\alpha]$. 
Then 
$$ \textstyle \dds  \Ad_{\exp i(H+sP^\alpha)}Z_0=
\dds  \Ad_{\exp - s \frac{K^\alpha}{\alpha(H)} \exp  iH}Z_0 \,.$$ 
\end{lem}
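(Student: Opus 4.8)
The plan is to reduce the asserted equality of derivatives to the comparison of a single $\,\p^\C$-valued quantity: the $\,\pi_\#$-projection of the logarithmic derivative of each curve at $\,s=0$. Both curves pass through $\,a=\exp iH\,$ at $\,s=0$, so for any smooth curve $\,c(s)\,$ in $\,G^\C\,$ with $\,c(0)=a\,$ I would factor $\,\Ad_{c(s)}Z_0=\Ad_a\big(\Ad_{a^{-1}c(s)}Z_0\big)\,$ and differentiate, obtaining
$$\dds \Ad_{c(s)}Z_0=\Ad_a[\xi,\,Z_0],\qquad \hbox{where}\quad \xi:=\dds \big(a^{-1}c(s)\big)\in\g^\C\,.$$
Thus everything is controlled by the bracket $\,[\xi,Z_0]$.

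The key point is that $\,[\xi,Z_0]\,$ depends on $\,\xi\,$ only through $\,\pi_\#\xi$. Indeed, since $\,Z_0\in Z(\k)\,$ one has $\,[\k^\C,Z_0]=0$, while on $\,\p^\C\,$ the operator $\,\ad_{Z_0}\,$ equals the complex structure $\,I_0\,$ and is therefore invertible; hence $\,\ker \ad_{Z_0}=\k^\C=\ker\pi_\#\,$ and $\,[\xi,Z_0]=[\pi_\#\xi,\,Z_0]$. Consequently it suffices to check that the two curves have the same projected logarithmic derivative, i.e. $\,\pi_\#\xi_L=\pi_\#\xi_R$, where $\,\xi_L\,$ and $\,\xi_R\,$ correspond to $\,c_L(s)=\exp i(H+sP^\alpha)\,$ and $\,c_R(s)=\exp(-s\frac{K^\alpha}{\alpha(H)})\exp iH$, respectively.

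For the left curve I would use the differential-of-exponential relation $\,(\exp_*)_{iH}=(\exp iH)_*\widetilde E_a\,$ recalled just before the statement, which gives $\,\xi_L=\widetilde E_a(iP^\alpha)\,$ and hence, by the definition of $\,E_a\,$ and (\ref{E}), $\,\pi_\#\xi_L=E_a(iP^\alpha)=\frac{\sin\alpha(H)}{\alpha(H)}\,iP^\alpha$. For the right curve, conjugating the first factor through $\,a\,$ yields $\,a^{-1}c_R(s)=\exp\big(-\frac{s}{\alpha(H)}\Ad_{a^{-1}}K^\alpha\big)$, so $\,\xi_R=-\frac{1}{\alpha(H)}\Ad_{a^{-1}}K^\alpha$. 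Writing $\,K^\alpha=X^\alpha+\theta X^\alpha\,$ and using $\,\Ad_{a^{-1}}X^\alpha=e^{-i\alpha(H)}X^\alpha\,$ together with $\,\Ad_{a^{-1}}\theta X^\alpha=e^{i\alpha(H)}\theta X^\alpha$, a one-line computation gives $\,\Ad_{a^{-1}}K^\alpha=\cos\alpha(H)\,K^\alpha-\sin\alpha(H)\,iP^\alpha$, whose $\,\p^\C$-component is $\,-\sin\alpha(H)\,iP^\alpha$. Hence $\,\pi_\#\xi_R=\frac{\sin\alpha(H)}{\alpha(H)}\,iP^\alpha=\pi_\#\xi_L$, and the identity follows.

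I expect the only genuinely conceptual step to be the reduction in the second paragraph: recognizing that the $\,\k^\C$-part of the logarithmic derivative is annihilated by $\,\ad_{Z_0}\,$ and may be discarded. Once this is in place, the two projected velocities are pinned down by (\ref{E}) and by the elementary root-space action of $\,\Ad_{a^{-1}}$; the hypothesis $\,H\in\Omega'\,$ enters only to guarantee $\,\alpha(H)\neq 0$, so that division by $\,\alpha(H)\,$ is legitimate.
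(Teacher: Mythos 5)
Your proof is correct and follows essentially the same route as the paper: both reduce the identity to comparing the $\,\p^\C$-components of the logarithmic derivatives of the two curves, computing the left one as $\,E_a(iP^\alpha)=\frac{\sin\alpha(H)}{\alpha(H)}iP^\alpha\,$ via the differential of $\,\exp\,$ and the right one as $\,-\frac{1}{\alpha(H)}\pi_\#\Ad_{a^{-1}}K^\alpha\,$ via the root-space action. The only difference is that you spell out the justification (that $\,\ker\ad_{Z_0}=\k^\C\,$, so only $\,\pi_\#\xi\,$ matters) which the paper compresses into the one-line appeal to $\,\Ad_{G^\C}Z_0\cong G^\C/K^\C$.
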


\begin{proof} 
Since $\,\Ad _{G^\C}Z_0 \cong G^\C/K^\C\,$ the statement can be reformulated as
$$ \textstyle \dds \exp i(H+sP^\alpha)K^\C=
 \dds \exp -s \textstyle \frac{K^\alpha}{\alpha(H)} \exp  iH K^\C \,.$$
By the definition of $\,E_a$, the  left-hand side is  $\,a_*E_a iP^\alpha$; likewise  the right-hand side is 
$$
\textstyle
-a_*\pi_\#Ad_{a^{-1}}{1\over\alpha(H)}K^\alpha=   a_*{{\sin\alpha(H)}\over{\alpha(H)}} iP^\alpha= a_*E_a iP^\alpha.$$
\end{proof}

\nbigskip

Fix  $\,H \in \a$.  Identify as usual  $\,T_H\p$ and $T_{\Psi(H)}\p$, the tangent spaces to $\,\p\,$ at $\,H\,$ and at $\,\Psi(H)$,  with $\,\p$.
Consider the differential $\,(\Psi_*)_H:\p \to \p\,$ of  $\,\Psi\,$ at $\,H$.

\bigskip
\begin{lem} {\rm (cf. \cite{DaSz97}, Lemma 2.4)}
Fix $\,a=\exp iH$, with $\,H = \sum_jt_jA_j$ in $\Omega'$. Then
\label{DIFFERENTIAL}
\begin{itemize}
\smallskip
\item[(i)]  $\,(\Psi_*)_H A_j = \cos \lambda_j(H)A_j,$ for all $\,j=1, \dots,r$,

\smallskip
\item[(ii)]  
$\,(\Psi_*)_H P^\alpha= \frac {\alpha (\Psi(H))}{\alpha (H)} P^\alpha$, for all $\,P^\alpha \in \p
[\alpha]\,$ with $\, \alpha \in \Sigma^+$.
\end{itemize}
In particular $\, (\Psi_*)_H\,$ is self-adjoint with respect to the Killing form $\,B$.
\end{lem}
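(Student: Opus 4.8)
The plan is to reduce both computations to two facts already available for $\,\Psi\,$: its $\,\Ad_K$-equivariance and the closed formula $\,\Psi(H)=\half\sum_{j=1}^r\sin\lambda_j(H)\,A_j\,$ on $\,\a$, both obtained in the preceding lemma. In particular I would not reuse the defining expression of $\,\Psi\,$ in terms of $\,Z_0$; everything follows from the restriction of $\,\Psi\,$ to $\,\a\,$ and from equivariance.

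For (i) I would simply differentiate the closed formula within $\,\a$. Writing $\,H=\sum_k t_kA_k$, the curve $\,s\mapsto H+sA_j\,$ increments only the $\,j$-th coordinate; since $\,\lambda_k(A_j)=2\delta_{kj}\,$ by (\ref{NORMALIZ1}) and (\ref{NORMALIZ2}), differentiating $\,\half\sum_k\sin\big(\lambda_k(H)+2s\delta_{kj}\big)A_k\,$ at $\,s=0\,$ kills all terms except $\,k=j\,$ and returns $\,\cos\lambda_j(H)A_j$, as required.

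For (ii) the key idea is to realize the transverse direction $\,P^\alpha\,$ infinitesimally through the $\,\Ad_K$-action, thereby avoiding any explicit manipulation of $\,\pi_\#$, $\,I_0\,$ or $\,J$. From the $\,\theta$-description of the partners $\,K^\alpha,P^\alpha\,$ in Lemma \ref{BASIS} one has $\,[H,K^\alpha]=\alpha(H)P^\alpha\,$ for $\,H\in\a$, so the curve $\,c(s)=\Ad_{\exp sK^\alpha}H\,$ satisfies $\,c(0)=H\,$ and $\,c'(0)=[K^\alpha,H]=-\alpha(H)P^\alpha$. As $\,H\in\Omega'\,$ forces $\,\alpha(H)\ne0$, the derivative of $\,\Psi\,$ in the $\,P^\alpha\,$ direction is recovered from this single curve. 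Differentiating the equivariance identity $\,\Psi(c(s))=\Ad_{\exp sK^\alpha}\Psi(H)\,$ at $\,s=0\,$ gives
$$-\alpha(H)\,(\Psi_*)_HP^\alpha=[K^\alpha,\Psi(H)]\,.$$
Since $\,\Psi(H)\in\a\,$ and $\,[A_j,K^\alpha]=\alpha(A_j)P^\alpha$, the closed formula yields $\,[K^\alpha,\Psi(H)]=-\big(\half\sum_j\sin\lambda_j(H)\,\alpha(A_j)\big)P^\alpha=-\alpha(\Psi(H))\,P^\alpha$, and dividing by $\,-\alpha(H)\,$ gives exactly $\,(\Psi_*)_HP^\alpha=\frac{\alpha(\Psi(H))}{\alpha(H)}P^\alpha$.

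The final assertion is then automatic: (i) and (ii) exhibit $\,\{A_1,\dots,A_r\}\,$ together with bases of the blocks $\,\p[\alpha]\,$ as an eigenbasis of $\,(\Psi_*)_H\,$ with the real eigenvalues $\,\cos\lambda_j(H)\,$ and $\,\frac{\alpha(\Psi(H))}{\alpha(H)}$; since the decomposition $\,\p=\a\oplus\bigoplus_{\alpha\in\Sigma^+}\p[\alpha]\,$ is $\,B$-orthogonal by (\ref{DECO}) and the $\,A_j\,$ are mutually $\,B$-orthogonal, this is a $\,B$-orthogonal eigenbasis, so $\,(\Psi_*)_H\,$ is $\,B$-self-adjoint. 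I expect the only delicate point to be the velocity-matching step in (ii)—correctly computing $\,c'(0)\,$ and tracking the sign of $\,\alpha(H)$. A more computational alternative would invoke Lemma \ref{PATH} to rewrite the inner derivative and then push $\,\pi_\#[K^\alpha,\Ad_{\exp iH}Z_0]\,$ through $\,I_0\,$ and $\,J$; that route is viable but forces the genuine root-theoretic work which the equivariance argument sidesteps, which is precisely why I prefer the latter.
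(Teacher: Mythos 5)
Your argument is correct, and part (ii) takes a genuinely shorter route than the paper's. The paper unfolds $(\Psi_*)_HP^\alpha=\dds\Psi(H+sP^\alpha)$ through the defining expression $\Psi=-I_0\circ J\circ\pi_\#\circ\Ad_{\exp i\,\cdot}\,Z_0$ and then invokes Lemma \ref{PATH} (whose proof requires the operator $E_a$ and the identity $E_aP^\alpha=\frac{\sin\alpha(H)}{\alpha(H)}P^\alpha$) to trade the curve $s\mapsto\Ad_{\exp i(H+sP^\alpha)}Z_0$ for $s\mapsto\Ad_{\exp(-sK^\alpha/\alpha(H))\exp iH}Z_0$, after which $\Ad_K$-equivariance produces the bracket $-\bigl[\tfrac{K^\alpha}{\alpha(H)},\Psi(H)\bigr]$. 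You reach the same bracket identity
$$-\alpha(H)\,(\Psi_*)_HP^\alpha=[K^\alpha,\Psi(H)]$$
entirely inside $\p$, by differentiating the equivariance relation along the adjoint-orbit curve $c(s)=\Ad_{\exp sK^\alpha}H$ with $c'(0)=-\alpha(H)P^\alpha$; since $H\in\Omega'$ guarantees $\alpha(H)\neq0$, the orbit directions $[\k,H]$ sweep out every block $\p[\alpha]$ and the chain rule recovers $(\Psi_*)_HP^\alpha$. This bypasses Lemma \ref{PATH}, the projection $\pi_\#$, and the operators $I_0$, $J$ altogether, at the price of using only the abstract facts that $\Psi$ is $\Ad_K$-equivariant and restricts to the explicit map $H\mapsto\half\sum_j\sin\lambda_j(H)A_j$ on $\a$ — which is indeed all the preceding lemma provides. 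The terminal computations agree: $[K^\alpha,\Psi(H)]=-\alpha(\Psi(H))P^\alpha$ because $\Psi(H)\in\a$ and $[A,K^\alpha]=\alpha(A)P^\alpha$, and your treatment of (i) (differentiating the closed formula, using $\lambda_k(A_j)=2\delta_{kj}$ from (\ref{NORMALIZ1})--(\ref{NORMALIZ2})) and of the self-adjointness (a $B$-orthogonal eigenbasis adapted to the decomposition (\ref{DECO}), with real eigenvalues constant on each block) matches what the paper leaves implicit. The one point worth stating explicitly in a write-up is the chain-rule step itself, i.e.\ that $\dds\Psi(c(s))=(\Psi_*)_H\,c'(0)$ for the smooth map $\Psi$, since that is what lets a single orbit curve determine the derivative in the $P^\alpha$ direction.
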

 
 \medskip
\begin{proof} 
Part (i) follows from the definition of $\,\Psi$. 

\noindent
(ii) By Lemma \ref{PATH} and the $\,\Ad_K$-equivariance of $\,\Psi$ one has 
$$\textstyle 
(\Psi_*)_H P^\alpha= \dds \Psi (H+sP^\alpha ) =-I_0 \circ J
\circ \pi_\# \big(\dds  \Ad_{\exp  i(H+sP^\alpha)}Z_0 \big )=$$
$$\textstyle 
=-I_0 \circ J \circ \pi_\# \big (\dds \Ad_{\exp -s \textstyle \frac{K^\alpha}{\alpha(H)} \exp iH}Z_0
\big)  
 = \dds \Ad_{\exp -s \textstyle \frac{K^\alpha}{\alpha(H)}}  \Psi(H) $$
$$\textstyle 
= -[\textstyle \frac{K^\alpha}{\alpha(H)},\,\Psi (H)]=
 \frac {\alpha (\Psi(H))}{\alpha (H)} P^\alpha\,.$$
\end{proof}

Extend $\,\C$-linearly   $\,(\Psi_*)_H\colon \p^\C\to\p^\C$, and 
define  a $\,G$-invariant, real analytic map $\,L:\exp i\Omega' \to {\rm GL}(\p^\C)$ by 
$$a \to L_{a}:=I_0F_a^{-1}(\Psi_*)_HE_a^{-1}\,.$$


\bn
\begin{lem}
\label{STRUTTURACOMPLESSA}
Given $\,a \in \exp i\Omega' \,$ one has $\,L_a = F_aI_0F_a^{-1}$. 
 In particular  $\,L\,$  extends real-analytically  to $\,\exp i\Omega\,$ and 
 $$Ia_*Z= a_*L_a \overline Z\,,$$ for every $\,Z \in \p^\C$.
\end{lem}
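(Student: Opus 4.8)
The plan is to verify the operator identity $\,L_a = F_aI_0F_a^{-1}\,$ directly, working block by block with respect to the $\,B$-orthogonal decomposition $\,\p^\C = \a^\C \oplus \bigoplus_{\alpha\in\Sigma^+}\p[\alpha]^\C\,$ obtained by complexifying (\ref{DECO}). What makes this tractable is that the three operators $\,F_a,\ E_a,\ (\Psi_*)_H\,$ entering the definition $\,L_a = I_0F_a^{-1}(\Psi_*)_HE_a^{-1}\,$ all act as a scalar on each summand, by (\ref{EFFE}), (\ref{E}) and Lemma \ref{DIFFERENTIAL} respectively, while $\,I_0\,$ merely permutes the summands according to (\ref{CPLX0}) and (\ref{CPLXBIS}). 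Consequently both $\,L_a\,$ and $\,F_aI_0F_a^{-1}\,$ carry each block to the same block (the one dictated by $\,I_0$), and on each block the asserted equality becomes a scalar identity. A point worth isolating first is that the single factor $\,I_0\,$ occurs at the same place on both sides; hence the (a priori unspecified) normalization of $\,I_0\,$ between two distinct blocks cancels, and one never needs it.

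I would treat first the $\,I_0$-invariant subspace $\,\a^\C \oplus \bigoplus_{j=1}^r \p[\lambda_j]^\C$, on which $\,I_0\,$ interchanges $\,A_j\,$ and $\,-P^j\,$ by (\ref{CPLX0}). Evaluating on $\,A_j$, the operators $\,E_a^{-1}\,$ and $\,F_a^{-1}\,$ act trivially on $\,\a$, while $\,(\Psi_*)_HA_j = \cos\lambda_j(H)A_j\,$ by Lemma \ref{DIFFERENTIAL}(i); one reads off that both $\,L_aA_j\,$ and $\,F_aI_0F_a^{-1}A_j\,$ equal $\,\cos\lambda_j(H)\,I_0A_j$. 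Evaluating on $\,P^j$, I would use $\,\lambda_j(\Psi(H)) = \sin\lambda_j(H)$, which follows from the formula $\,\Psi(H) = \frac12\sum_k \sin\lambda_k(H)A_k\,$ together with $\,\lambda_j(A_k)=2\delta_{jk}\,$ coming from (\ref{NORMALIZ1})--(\ref{NORMALIZ2}); the scalars produced by $\,E_a^{-1}\,$ and $\,(\Psi_*)_H\,$ then cancel and both sides reduce to $\,\frac{1}{\cos\lambda_j(H)}I_0P^j$.

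The heart of the computation is the off-diagonal case. Fix $\,P^\alpha\in\p[\alpha]\,$ with $\,I_0\p[\alpha]=\p[\beta]$, so that by (\ref{CPLXBIS}) the pair $\,\{\alpha,\beta\}\,$ is one of $\,\{e_k+e_l,\ e_k-e_l\}$, or $\,\alpha=\beta=e_j\,$ in the non-tube case. Feeding the scalars of $\,E_a^{-1}$, $\,(\Psi_*)_H\,$ and $\,F_a^{-1}\,$ into $\,L_aP^\alpha\,$ and comparing with $\,F_aI_0F_a^{-1}P^\alpha = \frac{\cos\beta(H)}{\cos\alpha(H)}I_0P^\alpha$, the equality collapses to the single trigonometric identity
$$\alpha(\Psi(H)) = \sin\alpha(H)\cos\beta(H)\,.$$
This is what must really be checked, and I expect it to be the only genuine obstacle. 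Writing $\,H=\sum_j t_jA_j\,$ and using $\,e_m(A_j)=\delta_{mj}$, one has $\,e_m(\Psi(H)) = \frac12\sin(2t_m)$, and the identity follows from the product-to-sum formula $\,\sin\alpha(H)\cos\beta(H) = \frac12[\sin(\alpha+\beta)(H) + \sin(\alpha-\beta)(H)]\,$ applied in each of the three cases (e.g. for $\,\alpha=e_k+e_l,\ \beta=e_k-e_l\,$ the right-hand side is $\,\frac12[\sin(2t_k)+\sin(2t_l)]=\alpha(\Psi(H))$).

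Finally, for the two concluding assertions: since $\,F_a\,$ is an isomorphism of $\,\p^\C\,$ for every $\,H\in\Omega\,$ (as noted after (\ref{EFFE})), the expression $\,F_aI_0F_a^{-1}\,$ is manifestly real-analytic on all of $\,\exp i\Omega$, which furnishes the desired real-analytic extension of $\,L\,$ beyond $\,\exp i\Omega'$. The displayed formula $\,Ia_*Z = a_*L_a\overline Z\,$ is then immediate from (\ref{CPLXONSLICE}), which already gives $\,Ia_*Z = a_*F_aI_0F_a^{-1}\overline Z$, upon substituting the identity $\,L_a = F_aI_0F_a^{-1}\,$ just established.
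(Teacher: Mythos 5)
Your proposal is correct and follows essentially the same route as the paper: a block-by-block comparison over the decomposition (\ref{DECO}), using that $F_a$, $E_a$, $(\Psi_*)_H$ act by scalars on each block while $I_0$ permutes them, with everything reducing to the identity $\alpha(\Psi(H))=\sin\alpha(H)\cos\beta(H)$ verified by product-to-sum formulas. The only cosmetic difference is that the paper first rewrites the claim as $(\Psi_*)_HE_a^{-1}=-I_0F_aI_0$ on $\p$ before checking the same cases.
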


\medskip
\begin{proof} Since the maps $\,F_a$, $\,E_a\,$ and $\,(\Psi_*)_H\,$ are $\,\C$-linear and commute, the statement 
of the lemma is equivalent to 
 \begin{equation}
 \label{RELAZIONE2}
 (\Psi_*)_HE_a^{-1}=-I_0F_aI_0
 \end{equation} 
 on $\,\p$. Recall that $\,I_0\,$ permutes the blocks of decomposition (\ref{DECO}), namely
$$I_0A_j \in \p[2e_j],\qquad I_0 \p[e_j]=\p[e_j],\qquad
 I_0\p[e_k+e_l]=\p[e_k-e_l]\,.$$
As $\,\lambda_j=2e_j$, by (\ref{CPLX0}) and Lemma \ref{DIFFERENTIAL},   one easily verifies that
$$(\Psi_*)_HE_a^{-1}A_j=\cos\lambda_j(H) A_j=-I_0F_aI_0 A_j$$ and  that 
$$\textstyle (\Psi_*)_HE_a^{-1}I_0A_j={{\lambda_j(\Psi(H))}\over{\sin\lambda_j(H)}}I_0A_j=I_0A_j=-I_0F_aI_0 \,I_0 A_j.$$
If $\,I_0\p[\alpha]=\p[\beta]$, with $\,\alpha,\beta\not=0$,
then we have that  $\,(\Psi_*)_HE_a^{-1}P^\alpha=-I_0F_aI_0 P^\alpha\,$
if and only if the following identity holds true
 \begin{equation}\label{TRIGO} \alpha(\Psi(H))=\sin\alpha(H)\cos\beta(H).\end{equation} 
For  $\,\alpha=\beta=e_k$, equation (\ref{TRIGO}) becomes $\,{1\over 2}\sin 2t_k=\sin t_k\cos t_k$, which is obviously verified.
For $\,\alpha=e_k\pm e_l\,$ and $\,\beta=e_k\mp e_l$, equation (\ref{TRIGO}) becomes
$$\textstyle {1\over 2} \left(\sin 2t_k \pm \sin 2t_l\right)=\sin(t_k\pm t_l)\cos (t_k\mp t_l),$$
which can be easily checked.
\end{proof}

\begin{remark}
\label{PULLI}
By using  the identity 
$$L_a=E_a(\Psi_*)_H^{-1}I_0(\Psi_*)_HE_a^{-1}\  $$
one can also verify that the complex structure $\,I\,$  is the pull-back via $\, \psi\,$ of the natural complex structure on the 
holomorphic cotangent bundle $\,T^*G/K^{1,0}\cong T^*G/K \cong TG/K\,$ of $\,G/K\,$ (see also Rem. \ref{PULLSIMPL}) .
\end{remark}


\medskip
\section{The hyper-k\"ahler structure}
\label{STRUCTURE}
\bigskip

In this section we introduce three $\,G$-invariant differential 2-forms $\,\omega_I$, $\,\omega_J\,$
and $\,\omega_K\,$ on the crown domain $\,\Xi\,$ in $\,G^\C/K^\C$ and study their basic properties. 
The forms $\,\omega_I\,$
and $\,\omega_K\,$ are restrictions of $\,G^\C$-invariant forms on $\,G^\C/K^\C\,$ and therefore closed;
$\,\omega_J\,$ will be shown to be closed in Proposition \ref{POTENTIAL}. 
The forms $\,\omega_I$, $\,\omega_J\,$
and $\,\omega_K\,$  are invariant under the  (almost) complex structures
$\,I$, $\,J\,$ and $\,K$, respectively.  
   Eventually, they
will be   the three K\"ahler forms of our hyper-K\"ahler structure.     


\bigskip
\begin{defi}
\label{FORMS}
For $\,g \cdot z\in \Xi$, with $\,z=aK^\C$, and $\,Z \in \p^\C\,$ 
define  $\,G$-invariant real-analytic forms by
  
\begin{itemize}
\smallskip
\item[]
 $\omega_I(g_*\widetilde Z_{z}, \,g_*\widetilde W_{z}):=\ \ \, {\rm Re}B (I_0F_{a} Z,\,F_{a}  W)\,$,
 
\smallskip
\item[]
  $\omega_J(g_*\widetilde Z_{z}, \,g_*\widetilde W_{z}):=\omega_I(J\widetilde Z_z, I \widetilde W_z)=-{\rm Im}B(I_0F_a  Z,  F_aI_0 \overline W)$, 
   
\smallskip
\item[]
 $\omega_K(g_*\widetilde Z_{z}, \,g_*\widetilde W_{z}):=-{\rm Im}B (I_0F_{a} 
Z,\,F_{a} 
 W) $, 
   \end{itemize}
 where  $F_a=\pi_\# Ad_{a^{-1}}|_{\p^\C}$ $(\,$see $($\ref{EFFEDEF}\,$)\,)$.
   \end{defi}

  \mn
  
  We claim that  the forms $\,\omega_I$, $\,\omega_J\,$ and 
 $\,\omega_K\,$  are   well   defined. Indeed,
 assume that  $\,gaK^\C=g'a'K^\C$, for some  $\,g,\,g'\in G$ and $\,a,\,a'\in\exp i\Omega$, and that 
 $\,g_*\widetilde Z_{aK^\C}=h_* \widetilde U_{a'K^\C}\,$ and $\,g_*\widetilde W_{aK^\C}=g'_* \widetilde V_{a'K^\C}$, for some
 $\,Z,\,U,\, W,\, V\in \p^\C$.  
This  is equivalent to $\,g=g'wk\,$ and $\,a=w^{-1}a'w$, for some $\,w\in N_K(\a)\,$ and $\,z\in Z_K(a)\,$
 (see \cite{KrSt05}, Prop.4.1), and in addition $\,  Ad_{wk}Z = U \,$ and $\,  Ad_{wk}W= V $.
Then, from the definition of the operator $\,F_a$, the $\,\Ad_K$-equivariance of $\,\pi_\#\,$ and the $\,\Ad_K$-invariance of
$\,B$, it follows that 
  $$B (I_0F_{a'} U,\,F_{a'} V)  
   =B (I_0 \pi_\#Ad_{wa^{-1} w^{-1}} Ad_{wk}Z, \pi_\#Ad_{wa^{-1} w^{-1}} Ad_{wk}W)=$$
   $$=B(I_0 F_a Z,F_aW).$$
As a result, 
 $$\omega_I(g'_*\widetilde U_{a'K^\C},g'_*\widetilde V_{a'K^\C})={\rm Re} B (I_0F_{a'} U,\,F_{a'} V) =  {\rm Re}B(I_0 F_a Z,F_aW)=$$
 $$=\omega_I(g_* \widetilde Z_{aK^\C},g_* \widetilde W_{aK^\C}),$$
which says that  $\,\omega_I\,$ is well defined.
A similar reasoning applies to  $\,\omega_K$.

The form   $\,\omega_J\,$ is well defined, since the complex structure $\,I\,$ and the form $\,\omega_I\,$ are.
For $\,Z,W\in\p^\C$, one has 
$$\omega_J(g_*\widetilde Z_{z},\,g_*\widetilde W_{z})=
\omega_I(Jg_*\widetilde Z_{z},\,Ig_*\widetilde W_{z})=
\omega_I(g_*\widetilde{iZ}_{z},\,g_*\widetilde{\overline{ I_0 W}}_{z})=$$
$$={\rm Re}B( I_0 F_a iZ,F_aI_0 \overline W)
=-{\rm Im}B(I_0 F_a  Z, F_aI_0 \overline W).$$


 \bn
 \begin{lem}
 \label{FORMS2}
 \item
\begin{itemize}
\item[(i)]
On $\,\Xi'\,$  one has
    $$\omega_J(g_*\widetilde Z_{aK^\C}, \,g_*\widetilde W_{aK^\C})=-{\rm Im}B \big ( 
  Z,\,(\Psi_*)_H  E_{a}^{-1} F_{a} \overline W \big ) .$$

\smallskip
\item[(ii)] 
 For $\,gaK^\C \in \Xi\,$ and 
 $\,Z,W\in\p^\C$, one has

\begin{itemize}

\smallskip
\item[]
$\omega_I((ga)_*Z , (ga)_*W )\ =\ \ \,{\rm Re} B (I_0Z ,\,  W );$

\smallskip
\item[]
 $\omega_J((ga)_*Z , (ga)_*W )\  =-{\rm Im} B 
(I_0 Z,\, F_a I_0F_{a}^{-1}  \overline W)$;

\smallskip
\item[]
$\omega_K((ga)_*Z , (ga)_*W )\ =-{\rm Im} B (I_0Z ,\,  W ) $.
\end{itemize}
\smallskip
\item[(iii)] The forms $\,\omega_I\,$ and $\,\omega_K\,$ are locally
$\,G^\C$-invariant 
and are closed. 
\end{itemize}
\end{lem}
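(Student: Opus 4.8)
The plan is to read the three identities straight off Definition \ref{FORMS} and the Lie-theoretic dictionary of Section \ref{DEFORMATION}, doing (ii) first since (iii) rests on it. For $\,z=aK^\C\,$ on the slice, the second identity in (\ref{TILDEVSHAT}) gives $\,(ga)_*Z=g_*a_*Z=g_*\widetilde{F_a^{-1}Z}_z\,$, and similarly for $\,W\,$. Substituting $\,F_a^{-1}Z,\,F_a^{-1}W\,$ into the three defining expressions and using $\,F_aF_a^{-1}={\rm Id}\,$ immediately produces $\,\omega_I((ga)_*Z,(ga)_*W)={\rm Re}\,B(I_0Z,W)\,$ and $\,\omega_K((ga)_*Z,(ga)_*W)=-{\rm Im}\,B(I_0Z,W)\,$. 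For $\,\omega_J\,$ one uses in addition that $\,F_a\,$ commutes with the conjugation of $\,\p^\C\,$ (clear from the real eigenvalues in (\ref{EFFE})), so that $\,\overline{F_a^{-1}W}=F_a^{-1}\overline W\,$ and the middle operator collapses to $\,F_aI_0F_a^{-1}\,$, which is $\,L_a\,$ by Lemma \ref{STRUTTURACOMPLESSA}.

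For (i) I would start from the expression $\,\omega_J(g_*\widetilde Z_z,g_*\widetilde W_z)=-{\rm Im}\,B(I_0F_aZ,F_aI_0\overline W)\,$ recorded just after Definition \ref{FORMS} and transport operators across $\,B\,$. Invariance of the Killing form makes $\,I_0=\ad_{Z_0}\,$ skew-adjoint, the $\,B$-orthogonal block decomposition (\ref{DECO}) together with (\ref{EFFE}) makes $\,F_a\,$ self-adjoint, and $\,F_a,\,E_a,\,(\Psi_*)_H\,$ are simultaneously diagonal on that decomposition, hence commute. Moving $\,I_0\,$ off the first slot and inserting the identity $\,(\Psi_*)_HE_a^{-1}=-I_0F_aI_0\,$ from (\ref{RELAZIONE2}), then moving $\,F_a\,$ to the second slot and commuting it through, gives
$$B(I_0F_aZ,F_aI_0\overline W)=B(F_aZ,(\Psi_*)_HE_a^{-1}\overline W)=B(Z,(\Psi_*)_HE_a^{-1}F_a\overline W),$$
so that $\,\omega_J=-{\rm Im}\,B(Z,(\Psi_*)_HE_a^{-1}F_a\overline W)\,$, as required on $\,\Xi'\,$ where $\,H\in\Omega'\,$.

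For (iii) I would manufacture a globally defined comparison form. Since $\,Z_0\,$ is central in $\,\k\,$, the operator $\,I_0=\ad_{Z_0}\,$ commutes with $\,\Ad_{K^\C}\,$ on $\,\p^\C\,$, and $\,B\,$ is $\,\Ad$-invariant; hence $\,(Z,W)\mapsto B(I_0Z,W)\,$ on $\,\p^\C\cong T_{eK^\C}G^\C/K^\C\,$ is $\,\Ad_{K^\C}$-invariant and extends to a genuine $\,G^\C$-invariant $\,2$-form $\,\Theta\,$ with $\,\Theta_{eK^\C}(Z,W)=B(I_0Z,W)\,$. By $\,G^\C$-invariance $\,\Theta_{gaK^\C}((ga)_*Z,(ga)_*W)=B(I_0Z,W)\,$, which by part (ii) equals $\,(\omega_I-i\omega_K)((ga)_*Z,(ga)_*W)\,$; as the vectors $\,(ga)_*Z\,$, $\,Z\in\p^\C\,$, exhaust $\,T_{gaK^\C}(G^\C/K^\C)\,$, we get $\,\Theta|_\Xi=\omega_I-i\omega_K\,$. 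Two conclusions follow. First, for every $\,X\in\g^\C\,$ the fundamental field $\,\widetilde X\,$ is a \emph{real} vector field, so $\,\mathcal L_{\widetilde X}\omega_I\,$ and $\,\mathcal L_{\widetilde X}\omega_K\,$ are real; splitting $\,\mathcal L_{\widetilde X}\Theta=0\,$ into real and imaginary parts yields $\,\mathcal L_{\widetilde X}\omega_I=\mathcal L_{\widetilde X}\omega_K=0\,$, i.e. local $\,G^\C$-invariance. Second, $\,\Theta\,$ is closed by the standard argument for invariant forms on the symmetric space $\,G^\C/K^\C\,$: in the Chevalley--Eilenberg differential of the $\,\Ad_{K^\C}$-invariant form attached to $\,\g^\C=\k^\C\oplus\p^\C\,$, every term of $\,d\Theta(X,Y,Z)\,$ with $\,X,Y,Z\in\p^\C\,$ involves the $\,\p^\C$-component of a bracket of two elements of $\,\p^\C\,$, which vanishes because $\,[\p^\C,\p^\C]\subseteq\k^\C\,$. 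Restricting $\,d\Theta=0\,$ to $\,\Xi\,$ gives $\,d\omega_I=d\omega_K=0\,$.

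The computations in (i) and (ii) are routine once the adjointness and commutation properties of $\,F_a,\,E_a,\,(\Psi_*)_H,\,I_0\,$ are assembled. The delicate point, and the conceptual heart of (iii), is the passage from invariance of the complex form $\,\omega_I-i\omega_K\,$ to separate invariance of its real and imaginary parts: this is legitimate precisely because the holomorphic $\,G^\C$-action induces real fundamental vector fields, so the vanishing of a complex Lie derivative forces the vanishing of each component. The main thing I would double-check is the normalization of the Chevalley--Eilenberg differential to be sure no boundary term survives, but the symmetric-space identity $\,[\p^\C,\p^\C]\subseteq\k^\C\,$ makes closedness automatic.
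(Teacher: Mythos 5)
Your parts (i) and (ii) are correct and follow the paper's own computations essentially verbatim: (ii) by substituting $a_*Z=\widetilde{F_a^{-1}Z}_{aK^\C}$ into Definition 5.1 (using that $F_a$ preserves $\p$ and hence commutes with conjugation), and (i) by transporting $F_a$, $I_0$ across $B$ and invoking the identity $(\Psi_*)_HE_a^{-1}=-I_0F_aI_0$ of (4.4), exactly as in the paper, only read in the opposite direction.

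For (iii) you take a genuinely different, and in fact cleaner, route to local $G^\C$-invariance. The paper verifies invariance of $\omega_I-i\omega_K$ pointwise on a neighbourhood of $eK^\C$, using the local decomposition $\exp\p\exp i\p\,K^\C$ of $G^\C$ and the fact that any $\exp iY$ is $K$-conjugate to some $\exp iH$ with $H\in\a$, and then extends the conclusion to all of $\Xi$ by real-analyticity of the forms. You instead build the $G^\C$-invariant extension $\Theta$ abstractly from the $\Ad_{K^\C}$-invariant pairing $B(I_0\cdot,\cdot)$ on $\p^\C$ (legitimate, since $Z_0\in Z(\k)$ makes $I_0=\ad_{Z_0}$ commute with $\Ad_{K^\C}$ and $B$ is $\Ad$-invariant and $I_0$-skew) and identify $\Theta|_\Xi=\omega_I-i\omega_K$ directly from part (ii), since the vectors $(ga)_*Z$, $Z\in\p^\C$, span each tangent space of $\Xi$. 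This bypasses both the local decomposition and the analytic-continuation step, at the price of having to know a priori that the invariant extension exists; the splitting of $\mathcal L_{\widetilde X}\Theta=0$ into real and imaginary parts is sound because $\widetilde X$ is a real vector field. The closedness argument is then the same in both treatments: reduction to $\p^\C$-directions and the inclusion $[\p^\C,\p^\C]\subset\k^\C$, i.e. Wolf's argument for invariant forms on a symmetric space; your Chevalley--Eilenberg phrasing and the paper's pull-back to $G^\C$ with Cartan's formula are equivalent formulations, and in either normalization every surviving term contains $\pi_*\widehat{[Y,W]}$ with $[Y,W]\in\k^\C$, so nothing is left to check there.
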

 
 \medskip
 \begin{proof}
 (i) 
By (\ref{RELAZIONE2}),    the  quantity on the right-hand side equals 

$$- {\rm Im}B(   Z,  (\Psi_*)_H E_a^{-1}  F_a \overline W) =- {\rm Im}B(F_a  Z, \,  (\Psi_*)_H E_a^{-1}  \overline W)=$$
$${\rm Im}B(F_a  Z,\, I_0 F_aI_0 \overline W)=-{\rm Im}B(I_0F_a  Z,\, F_aI_0 \overline W)\,,$$
 as claimed.


\bn 
(ii)
One has 
$\,a_*Z=\widetilde{F_a^{-1}Z}_{aK^\C}$. 
The statements about $\,\omega_I\,$ and $\,\omega_K\,$ are immediate.
For  $\,\omega_J$, one has 
$$\omega_J((ga)_*Z, (ga)_*W )=\omega_I(J(ga)_*Z,\,I(ga)_*W )=\omega_I((ga)_*iZ,\,(ga)_* L_a \overline W)=$$
$$\omega_I((ga)_*iZ,\,(ga)_* F_aI_0 F_a^{-1} \overline W)= {\rm Re}\, iB(I_0 Z,F_aI_0 F_a^{-1} W)=
-{\rm Im}B(I_0Z,F_aI_0 F_a^{-1}\overline W).$$

\bn
(iii) We  first show  that  the complex form $\,\omega_I-i \omega_K\,$ is locally 
$\,G^\C$-invariant. To   this aim, consider the map $\,\p \times i\p \times K^\C \to G^\C$, given by 
$\,(X,\,iY,\,k) \to \exp X \exp iY k$, which sends a neighborhood of 
$\,(0,\,0,\,e)\,$ in $\,\p \times i\p \times K^\C\,$ onto a neighborhood
of $\,e\,$ in $\,G^\C$.  For every  $\,Y\,\in \,\p\,$ one can write  $\,\exp iY = h \exp iH 
h^{-1}$, for some
$\,h \in K\,$ and $\, H \in \a$. 
Then,  due to the $\,K^\C$-equivariance of $I_0$, 
the $\,G^\C$-invariance of $\,B\,$ and the formulas proved in (ii), 
for $\,\exp X \exp iY k\,\in \exp \p\exp i\p K^\C\,$
one has
$$(\omega_J-i\omega_K)((\exp X \exp iY k)_*Z, \,(\exp X \exp iY k)_*W)=$$
$$=(\omega_J-i\omega_K)((\exp X h \exp iH h^{-1} k)_*Z, \,(\exp X 
h \exp iH h^{-1} k)_*W)=$$
$$=B (I_0\Ad_{h^{-1} k}Z ,\,  \Ad_{h^{-1} k}W )=B (I_0Z ,\, W )=\,(\omega_J-i\omega_K)(Z, \, W)\, .
$$
 By letting   $\,\exp X \exp iY k\,$ vary  in 
a neighborhood of $\,e\,$ in $\,G^\C$, one concludes that
the forms 
are locally  $\,G^\C$-invariant near $eK^\C$.

Since $\,\omega_I\,$ and $\,\omega_K\,$ are real-analytic, they are
 restrictions of 
 $\,G^\C$-invariant forms on $\,G^\C/K^\C$.
 In order to prove that they are closed,
 we adapt  
 the proof in \cite{Wol84}, Thm. 8.5.6, p. 250, to  our complex  setting. 
 
Let $\,\omega \,$ be an arbitrary $\,G^\C$-invariant 2-form on $\,G^\C/K^\C$.  
A similar argument as in \cite{Wol84}, Lemma 8.5.5, shows that $\,\omega\, $ is
closed if and only if so is its  pull-back  $\,\pi^*\omega\,$  to $\,G^\C$.
Since $\,\pi\,$ is $\,G^\C$-equivariant, the form $\,\pi^*\omega\,$ is $\,G^\C$-invariant.
Given left invariant vector fields $\,\widehat X$, $\,\widehat Y$, $\,\widehat W$
on $\,G^\C\,$ such that at least one of them lies in the kernel $\,\k^\C\,$ of the differential
$\,\pi_*\,$ at $\,e$, one has 
$$d\pi^*\omega(\widehat X,\,\widehat Y,\,\widehat W)\equiv 
d\pi^*\omega(\widehat X_e,\,\widehat Y_e,\,\widehat W_e)= 
\pi^*d\omega(X,\,Y,\,W)= $$
$$=d\omega(\pi_*X,\,\pi_*Y,\,\pi_*W) =0\,.$$
Hence we may assume that $\,X,\,Y,\,W\in \p^\C$.
From Cartan's formula for the external derivation and 
the $\,G^\C$-invariance of $\,\pi^*\omega\,$ one has
$$d\pi^*\omega(\widehat X,\,\widehat Y,\,\widehat W)= 
\pi^*\omega(\widehat X,\,\widehat {[ Y,\,W]})-
\pi^*\omega(\widehat Y,\,\widehat {[X,\, W]})+
\pi^*\omega(\widehat W,\,\widehat {[X,\,Y]})\,,$$
which vanishes due to the inclusion $\,[\p^\C,\,\p^\C] \subset \k^\C$.
\end{proof}


\bigskip
\begin{lem}
\label{PROPERTIFORMS}
The form $\,\omega_I\,$ is $\,I$-invariant, namely 
  $$\omega_I(\,\cdot\,,\,\cdot\,)=\omega_I(I \,\cdot\,,I\,\cdot\,) \,.$$
  Likewise,
  $\,\omega_J\,$ is $\,J$-invariant and $\,\omega_K\,$ is $\,K$-invariant.
\end{lem}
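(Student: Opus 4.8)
The plan is to prove the three invariance statements in parallel, since each has the same structure: a 2-form $\,\omega_\bullet\,$ is invariant under the complex structure it is paired with. I would reduce everything to a pointwise computation on the $\,G$-slice, because all three forms and all three (almost) complex structures are $\,G$-invariant. Thus it suffices to verify the identities at a point $\,z=aK^\C\,$ on the slice, evaluating on vectors of the form $\,\widetilde Z_z\,$ and $\,\widetilde W_z\,$ with $\,Z,\,W\in\p^\C$; the general point $\,g\cdot z\,$ follows by pushing forward with $\,g_*\,$ and using $\,G$-invariance of both the form and the structure.

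First I would treat $\,\omega_I$. By Definition \ref{FORMS} one has $\,\omega_I(\widetilde Z_z,\widetilde W_z)={\rm Re}\,B(I_0F_aZ,F_aW)$. Applying $\,I\,$ to each slot and using Definition \ref{CPLX}, which gives $\,I\widetilde Z_z=\widetilde{\overline{I_0Z}}_z$, I would compute $\,\omega_I(I\widetilde Z_z,I\widetilde W_z)={\rm Re}\,B(I_0F_a\overline{I_0Z},F_a\overline{I_0W})$. The key point is that $\,F_a\,$ is $\,\C$-linear and commutes with conjugation on the real eigenspaces (by the eigenvalue formulas (\ref{EFFE}), $\,F_a\,$ is real on $\,\p$), and that $\,I_0\,$ is a $\,\C$-linear complex structure with $\,I_0^2=-Id\,$ commuting with $\,F_a\,$. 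Combining these with the symmetry properties of $\,B\,$ — namely the invariance $\,B(I_0\,\cdot\,,I_0\,\cdot\,)=B(\,\cdot\,,\,\cdot\,)\,$, which holds since $\,I_0=\ad Z_0\,$ is skew with respect to $\,B\,$ — should collapse the right-hand side back to $\,{\rm Re}\,B(I_0F_aZ,F_aW)\,$, establishing the first claim.

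For $\,\omega_K\,$ the argument is identical, replacing $\,{\rm Re}\,$ by $\,-{\rm Im}\,$ throughout and using $\,K=IJ\,$ together with $\,J\widetilde Z_z=\widetilde{iZ}_z\,$ and Definition \ref{CPLX} to express the action of $\,K\,$ explicitly; one finds $\,K\widetilde Z_z=-\widetilde{\overline{I_0\,iZ}}_z\,$ (up to a sign I would pin down carefully), and the same cancellation via $\,B(I_0\,\cdot\,,I_0\,\cdot\,)=B(\,\cdot\,,\,\cdot\,)\,$ applies. For $\,\omega_J\,$, the cleanest route is to use its very definition $\,\omega_J(\,\cdot\,,\,\cdot\,)=\omega_I(J\,\cdot\,,I\,\cdot\,)\,$ and the fact, already established in this section, that $\,(I,J,K)\,$ is a quaternionic structure so that $\,IJ=-JI\,$, $\,JK=-KJ\,$, etc.; substituting $\,J\,\cdot\,,J\,\cdot\,$ into $\,\omega_J\,$ and commuting $\,J\,$ past $\,I\,$ reduces $\,J$-invariance of $\,\omega_J\,$ to the already-proven $\,I$-invariance of $\,\omega_I$.

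The main obstacle I anticipate is bookkeeping of the factors $\,i\,$, the signs, and the interaction between complex-conjugation and $\,\C$-linearity: the real and imaginary parts of a $\,\C$-bilinear form transform in opposite ways under insertion of conjugates, so verifying that the $\,{\rm Re}\,$ and $\,{\rm Im}\,$ parts land back on themselves (rather than swapping) requires care. The conceptual engine, however, is uniform and light — it is entirely the skew-symmetry of $\,\ad Z_0=I_0\,$ with respect to $\,B\,$, equivalently the $\,B$-orthogonality of the eigenspace decompositions in (\ref{DECO}), that makes the cancellations work.
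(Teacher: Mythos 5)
Your plan contains a genuine gap at its central step. You assert that $\,I_0\,$ commutes with $\,F_a\,$ and that this, together with $\,B(I_0\,\cdot\,,I_0\,\cdot\,)=B(\,\cdot\,,\,\cdot\,)$, collapses $\,{\rm Re}\,B(I_0F_a\overline{I_0Z},F_a\overline{I_0W})\,$ back to $\,{\rm Re}\,B(I_0F_aZ,F_aW)$. But $\,I_0\,$ does \emph{not} commute with $\,F_a$: by (\ref{CPLX0}) and (\ref{CPLXBIS}), $\,I_0\,$ permutes blocks of the decomposition (\ref{DECO}) on which $\,F_a\,$ has \emph{different} eigenvalues (it sends $\,\a\,$, where $\,F_a=Id$, to $\,\bigoplus_j\p[\lambda_j]$, where $\,F_a=\cos\lambda_j(H)$, and $\,\p[e_k+e_l]\,$ to $\,\p[e_k-e_l]$, with eigenvalues $\,\cos(t_k+t_l)\,$ versus $\,\cos(t_k-t_l)$). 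If they commuted, $\,L_a=F_aI_0F_a^{-1}\,$ would equal $\,I_0\,$ and Lemma \ref{STRUTTURACOMPLESSA} would be vacuous. The identity you need is nevertheless true, but for a subtler reason: $\,F_a\,$ is $\,B$-self-adjoint, $\,I_0\,$ is $\,B$-skew, and $\,(F_aI_0)^2=(I_0F_a)^2\,$ acts as the scalar $\,-\cos\alpha(H)\cos\beta(H)\,$ on each pair of $\,I_0$-swapped blocks $\,\p[\alpha]\leftrightarrow\p[\beta]$. This is exactly the input the paper packages into Lemma \ref{STRUTTURACOMPLESSA} (writing $\,L_a=I_0F_a^{-1}(\Psi_*)_HE_a^{-1}\,$ with a self-adjoint factor and $\,L_a^2=-Id$), which is how its proof of this lemma runs; your argument as stated rests on a false commutation and must be replaced by this block-pairing argument or by the paper's $\,L_a$-computation.

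There is a second, smaller gap in your treatment of $\,\omega_J$. Reducing $\,J$-invariance of $\,\omega_J\,$ to $\,I$-invariance of $\,\omega_I\,$ via $\,IJ=-JI\,$ alone does not close: unwinding $\,\omega_J(J\,\cdot\,,J\,\cdot\,)=\omega_I(-\,\cdot\,,IJ\,\cdot\,)\,$ leaves you needing $\,\omega_I(IX,JY)=\omega_I(JX,IY)$, i.e.\ that $\,J\,$ preserves the bilinear form $\,\omega_I(\,\cdot\,,I\,\cdot\,)$ --- a nontrivial compatibility that is only established later (Proposition \ref{POSITIVE}), not a formal consequence of the quaternion relations. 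The direct route is both correct and easier: by Lemma \ref{FORMS2}(ii), $\,\omega_J((ga)_*Z,(ga)_*W)=-{\rm Im}\,B(I_0Z,F_aI_0F_a^{-1}\overline W)$, and since $\,J(ga)_*Z=(ga)_*(iZ)$, replacing $\,(Z,W)\,$ by $\,(iZ,iW)\,$ multiplies the argument of $\,{\rm Im}\,$ by $\,i\cdot\overline{i}=1\,$ because of the single conjugation, giving $\,J$-invariance at once. Your sign bookkeeping for $\,K=IJ\,$ is a minor matter by comparison; the two points above are where the proof actually needs repair.
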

 
 \sn
  \begin{proof} By the $\,G$-invariance of the forms, it is sufficient 
 to prove the statements for $\,z=aK^\C\,$ in the slice $\,\exp i\Omega K^\C$.\pn
One has 
$$\omega_I(Ia_*Z,\,Ia_*W)=
\omega_I(a_*L_a \overline Z,\, a_* L_a\overline W)=
{\rm Re}B (I_0L_a \overline Z,\, L_a \overline W)=$$
$$-{\rm Re}B (F_a^{-1}(\Psi_*)_H E_a^{-1} \overline Z,L_a \overline W)=
-{\rm Re}B ( \overline Z,\,F_a^{-1}(\Psi_*)_H E_a^{-1}L_a  \overline W)=$$
$$-{\rm Re}B (I_0 \overline Z,\,L^2_a \overline W)=
{\rm Re}B (I_0 \overline Z, \, \overline W)=
{\rm Re}B (I_0 Z,\, W)=
\omega_I(a_*Z,\,a_*W)\,.$$
Similarly, one obtains the $\,J$-invariance of $\,\omega_J\,$ and 
 the $\,K$-invariance of $\,\omega_K$.
\end{proof} 


\bigskip
\section{An invariant potential for $\,\omega_J\,$ and the associated moment map }
\label{POTMOMENT}

\bigskip
In this section we exhibit a $\,G$-invariant potential for $\,\omega_J\,$,
i.e. a $\,G$-invariant, smooth  function
$\,\rho_J\,$ such that $\, 2i \partial  \bar \partial_J \rho_J= \omega_J$.
The fact that $\,\rho_J\,$  is   $\,J$-strictly plurisubharmonic implies
that   $\,\omega_J\,$ is a  K\"ahler form with respect to $\,J$.
We prove that $\, 2i \partial  \bar \partial_J \rho_J= \omega_J\,$
by applying   moment map techniques, namely 
  we use the  following  reformulation of Lemma 7.1 in
[HeGe07].

\medskip
Let $\,{\mathcal G}\,$ be a real Lie group acting  by holomorphic transformations on
 a  manifold  $\,M\,$    with a complex
structure $\,{\mathcal I}\,$. For $\,X \in Lie({\mathcal G})$, denote by 
$\,\widetilde X\,$ the vector field on $\,M\,$  induced by the $\,{\mathcal G}$-action,
 namely $\,\widetilde X_z:=\dds \exp sX \cdot z$.
Let $\,\rho:M \to \R\,$ be a $\,G$-invariant, smooth 
$\,{\mathcal I}$-strictly plurisubharmonic function. Set 
$\,d^c_{\mathcal I}\rho:=d\rho \circ {\mathcal I}$, so that 
$\,2i \partial  \bar \partial_{\mathcal I}\rho=-dd^c_{\mathcal I}\rho$. Then $\,-dd^c_{\mathcal I}\rho\,$ is 
a $\,{\mathcal G}$-invariant K\"ahler form with respect to $\,{\mathcal I}$ and the map 
$\,\mu:M \to Lie({\mathcal G})^*$, defined by 
$$ \mu(z)(X)=d^c_{\mathcal I}\rho(\widetilde X_z)\,,$$
for $\,X\in Lie({\mathcal G})$,  is a moment map. It  is referred to as the moment map associated
with $\,\rho$.

In order to compute  the $\,G$-invariant form $\,-dd^c_{\mathcal I}\rho$, also in the case when 
$\,\rho\,$ is not  known to be $\,{\mathcal I}$-strictly plurisubharmonic (as we do in Section \ref{OMEGAI}), 
one can apply the following lemma.

 
 \sn
\begin{lem}
\label{DIFFER} Let $\,\rho\colon M \to \R\,$ be a smooth $\,{\mathcal G}$-invariant function.  
 For $\,X\in Lie({\mathcal G})$, define  $\,\mu^X: M \to \R\,$ by 
 $\,\mu^X(z)=d^c_{\mathcal I}\rho(\widetilde X_z)\,.$
Then
 $$\,d\mu^X= -\iota_{\widetilde X}dd^c_{\mathcal I}\rho\,.$$ 
 \end{lem}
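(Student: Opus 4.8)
The statement to prove is Lemma \ref{DIFFER}: for a smooth $\,{\mathcal G}$-invariant function $\,\rho\,$ and $\,X \in Lie({\mathcal G})$, the function $\,\mu^X(z)=d^c_{\mathcal I}\rho(\widetilde X_z)\,$ satisfies $\,d\mu^X= -\iota_{\widetilde X}dd^c_{\mathcal I}\rho$.

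The plan is to invoke Cartan's magic formula for the Lie derivative. Setting $\,\alpha:=d^c_{\mathcal I}\rho$, by definition one has $\,\mu^X=\iota_{\widetilde X}\alpha$, so that $\,d\mu^X=d\,\iota_{\widetilde X}\alpha$. Cartan's formula gives the identity of operators on differential forms
$$\mathcal{L}_{\widetilde X}=d\,\iota_{\widetilde X}+\iota_{\widetilde X}\,d\,,$$
hence $\,d\,\iota_{\widetilde X}\alpha=\mathcal{L}_{\widetilde X}\alpha-\iota_{\widetilde X}d\alpha\,$. Since $\,2i\partial\bar\partial_{\mathcal I}\rho=-dd^c_{\mathcal I}\rho=-d\alpha$, the term $\,-\iota_{\widetilde X}d\alpha\,$ is precisely the $\,-\iota_{\widetilde X}dd^c_{\mathcal I}\rho\,$ appearing on the right-hand side of the claim. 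So the whole statement reduces to showing that the remaining term vanishes, namely $\,\mathcal{L}_{\widetilde X}\alpha=0\,$, i.e. the one-form $\,d^c_{\mathcal I}\rho\,$ is invariant under the flow of $\,\widetilde X$.

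First I would establish this invariance. The flow of $\,\widetilde X\,$ is the $\,{\mathcal G}$-action by $\,\exp sX$, which acts by holomorphic transformations and therefore commutes with $\,{\mathcal I}\,$ and with the exterior derivative $\,d$. The $\,{\mathcal G}$-invariance of $\,\rho\,$ gives $\,(\exp sX)^*\rho=\rho\,$ for all $\,s$, hence $\,(\exp sX)^*d\rho=d\rho$. Pulling back $\,\alpha=d\rho\circ{\mathcal I}\,$ and using that $\,(\exp sX)_*\,$ commutes with $\,{\mathcal I}\,$ (holomorphicity of the action), one gets $\,(\exp sX)^*\alpha=\alpha$. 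Differentiating in $\,s\,$ at $\,s=0\,$ yields $\,\mathcal{L}_{\widetilde X}\alpha=0$. Substituting back, $\,d\mu^X=\mathcal{L}_{\widetilde X}\alpha-\iota_{\widetilde X}d\alpha=-\iota_{\widetilde X}d\alpha=-\iota_{\widetilde X}dd^c_{\mathcal I}\rho$, as claimed.

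The only delicate point, and the step I would write out most carefully, is the compatibility between the pull-back by $\,(\exp sX)\,$ and the complex structure $\,{\mathcal I}$: the equality $\,(\exp sX)^*(d\rho\circ{\mathcal I})=((\exp sX)^*d\rho)\circ{\mathcal I}\,$ relies precisely on $\,{\mathcal G}\,$ acting holomorphically, so that $\,(\exp sX)_*\circ{\mathcal I}={\mathcal I}\circ(\exp sX)_*$. This is exactly the hypothesis placed on the $\,{\mathcal G}$-action, so no real obstacle remains; the argument is otherwise a direct application of standard Cartan calculus. I would note in passing that $\,{\mathcal I}$-strict plurisubharmonicity of $\,\rho\,$ is not needed for this lemma, which is why it can be applied in Section \ref{OMEGAI} where $\,\rho\,$ is not yet known to have that property.
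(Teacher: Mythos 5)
Your proof is correct and is precisely the standard Cartan-formula argument ($d\mu^X=d\iota_{\widetilde X}\alpha=\mathcal{L}_{\widetilde X}\alpha-\iota_{\widetilde X}d\alpha$, with $\mathcal{L}_{\widetilde X}d^c_{\mathcal I}\rho=0$ following from the $\mathcal G$-invariance of $\rho$ and the holomorphicity of the action) that the paper itself does not reproduce but delegates to Lemma 7.1 of \cite{HeSc07}, noting exactly as you do that neither compactness of $\mathcal G$ nor plurisubharmonicity of $\rho$ is used. So you have in effect supplied the proof the paper cites, with the one genuinely delicate step (commutation of the pull-back with $\mathcal I$) correctly isolated.
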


\medskip
 \begin{proof} The same proof as the one of  Lemma 7.1 in \cite{HeSc07}
 applies to our situation. Indeed their argument needs
  neither  the compactness of $\,{\mathcal G}\,$ nor the plurisubharmonicity of $\,\rho$. 
   \end{proof}

\bigskip
\begin{prop}
\label{POTENTIAL} 
Let  $\,z=gaK^\C$, with $\,a= \exp iH$, be an element in $\,\Xi$.
The $\,G$-invariant function $\,\rho_J:\Xi \to \R\,$ defined by 
$$\,\rho_J(gaK^\C):=  -\textstyle \frac{1}{4}\sum_{j=1}^r \cos 
\lambda_j(H) B(A_j,A_j)\,,$$
is a strictly plurisubharmonic potential for $\,\omega_J$. 
 The associated moment map $\,\mu_J:~\Xi \to \g^*\,$  is given by 
$$\,
\mu_J(gaK^\C)(X) = B \big (\Ad_{g^{-1}} (X) ,\,\Psi(H) \big )\,,$$
 for $\,X\in\g$.
\end{prop}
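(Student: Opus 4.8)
The plan is to verify the two assertions separately, and the natural order is to establish the moment map formula first, since the identity $2i\partial\bar\partial_J\rho_J=\omega_J$ is most cleanly obtained by showing $-\iota_{\widetilde X}\,dd^c_J\rho_J$ agrees with $\iota_{\widetilde X}\omega_J$ for all $X\in\g$, and this is precisely what the moment map computation delivers. Concretely, I would define $\mu^X_J(z):=d^c_J\rho_J(\widetilde X_z)=d\rho_J(J\widetilde X_z)$ and appeal to Lemma \ref{DIFFER}, which gives $d\mu^X_J=-\iota_{\widetilde X}\,dd^c_J\rho_J$ without assuming plurisubharmonicity. So the task reduces to computing $\mu^X_J$ explicitly and then identifying its differential with $\iota_{\widetilde X}\omega_J$.

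First I would compute $\mu^X_J$ on the slice. Since $\rho_J$ is $\,G$-invariant and constant along the fibers of the slice direction that project to zero, I expect $d\rho_J$ to detect only the $\,\a$-component of a tangent vector. Using Lemma \ref{CURVESSTA} to express $J\widetilde X_z=\widetilde{iX}_z$ (for $z=aK^\C$, $X\in\p$) as $\frac{d}{ds}\big|_{s=0}\exp(sC)\exp i(H+sX_\a)K^\C$, the invariance of $\rho_J$ kills the $\exp(sC)$ factor, and differentiating $-\frac{1}{4}\sum_j\cos\lambda_j(H+sX_\a)B(A_j,A_j)$ at $s=0$ should produce exactly $\frac14\sum_j\lambda_j(X_\a)\sin\lambda_j(H)B(A_j,A_j)$. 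Recognizing this as $B(X,\Psi(H))$ via $\Psi(H)=\frac12\sum_j\sin\lambda_j(H)A_j$ and the normalizations \eqref{NORMALIZ1}, \eqref{NORMALIZ2}, together with $\,B$-orthogonality of the blocks in \eqref{DECO}, yields $\mu^X_J(aK^\C)=B(X,\Psi(H))$ on the slice. The $\,G$-equivariance of $\Psi$ and of the whole construction then upgrades this to $\mu_J(gaK^\C)(X)=B(\Ad_{g^{-1}}X,\Psi(H))$ on all of $\Xi$, which is the claimed moment map.

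Next I would confirm that this $\mu^X_J$ has differential equal to $\iota_{\widetilde X}\omega_J$, which by Lemma \ref{DIFFER} is the statement $2i\partial\bar\partial_J\rho_J=\omega_J$, i.e.\ $\rho_J$ is a potential for $\omega_J$; strict plurisubharmonicity then follows from positive-definiteness of the associated metric $\omega_J(\,\cdot\,,J\,\cdot\,)$, which one reads off from Lemma \ref{FORMS2}(ii) and the explicit eigenvalues of $F_a,E_a,(\Psi_*)_H$ on $\Omega$. The main obstacle is this last comparison: computing $d\mu^X_J$ as a two-form and matching it term-by-term against the Lie-theoretic expression for $\omega_J$ from Lemma \ref{FORMS2}(i), $\omega_J(g_*\widetilde Z_z,g_*\widetilde W_z)=-\mathrm{Im}\,B(Z,(\Psi_*)_HE_a^{-1}F_a\overline W)$. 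This requires carefully tracking derivatives of the root-theoretic quantities $\sin\lambda_j(H)$, $\cos\alpha(H)$ along curves given by Lemma \ref{CURVESSTA}, and invoking Lemma \ref{DIFFERENTIAL} so that the factors $\frac{\alpha(\Psi(H))}{\alpha(H)}$ appearing in $(\Psi_*)_H$ combine correctly with the $\frac{\cos\alpha(H)}{\sin\alpha(H)}$ coefficients from the vector-field identities. I expect the computation to split cleanly over the strongly orthogonal root blocks $\p[\lambda_j]$ and the remaining $\p[\alpha]$, and the trigonometric identities already verified in Lemma \ref{STRUTTURACOMPLESSA} (equation \eqref{TRIGO}) to be exactly what forces the two expressions to agree. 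The bookkeeping over the off-diagonal blocks $\p[e_k\pm e_l]$ is where errors are most likely, so I would organize it by the $I_0$-pairing of blocks established in \eqref{CPLXBIS}.
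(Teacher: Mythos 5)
Your proposal follows essentially the same route as the paper: compute $\mu^X_J$ on the slice via Lemma \ref{CURVESSTA}, identify it with $B(X,\Psi(H))$, and then use Lemma \ref{DIFFER} to match $d\mu^X_J$ against the expression for $\omega_J$ in Lemma \ref{FORMS2}(i). The only divergences are minor: the paper obtains strict plurisubharmonicity directly from the convexity of $H\mapsto\rho_J(\exp iH\,K^\C)$ together with Theorem 10 of \cite{BHH03} (while also noting your positive-definiteness route via Proposition \ref{POSITIVE}), and you should add the short verification that $d^c_J\rho_J(\widetilde X_z)=0$ for $X\in\k$, since the moment map formula is asserted for all $X\in\g$ and not only for $X\in\p$.
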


\bigskip
\noindent

Note that $\,B(A_1,\,A_1)= \dots = B(A_r,\,A_r)\,$ is a constant depending only on the symmetric space
$\,G/K$.

\bigskip
\begin{proof} 
The map $\, \Omega \to \R,$ given by $\,H \to \rho_J(\exp iH K^\C)$, is strictly 
convex. Then Theorem 10
in \cite{BHH03} implies that $\,\rho_J\,$ is strictly plurisubharmonic. 

The form $\,-dd^c_J\rho_J\,$ is computed by applying Lemma \ref{DIFFER}. For this we 
first determine $\,\mu^X_J(z):= d^c_J\rho_J(\widetilde X_z)$. In particular we 
obtain the moment map $\,\mu_J\,$ associated with $\,\rho_J$.

Fix $\,z=aK^\C$, with $\,a= \exp iH\,$ and $\,H \in \Omega'\,$ (see (\ref{REGULAR})).
Start with $\,X \in \p\,$ and 
 write  $\,X_\a={1\over 2} \sum_j \lambda_j(X_\a) A_j\,$ for the $\,\a$-component of $\,X$. 
By  Lemma \ref{CURVESSTA}(i) and the $\,G$-invariance of $\,\rho_J$, 
one has 
$$\textstyle
\mu_J^X(z)=d_J^c \rho_J (\widetilde X_z)= d \rho_J (\widetilde {iX}_z)=\dds \rho_J (\exp i(H+sX_\a) K^\C)=$$
$$
= \textstyle -\frac{1}{4}\dds    \textstyle \sum_{j=1}^r \cos \lambda_j(H+sX_\a)B(A_j,\,A_j)= $$
$$= \textstyle \frac{1}{4} \textstyle \sum_{j=1}^r  \sin \lambda_j(H) \lambda_j(X_\a)B(A_j,\,A_j)
= B \big (X_\a,\,\Psi (H) \big )= 
B \big (X,\,\Psi (H) \big ) \,.$$
The $G$-invariance of $\,\rho_J\,$ and of the complex structure $\,J\,$   
then implies 
$$\mu_J^X(g \cdot z)=d^c_J\rho_J(\widetilde X_{g\cdot z})=d^c_J\rho_J( \widetilde{{\Ad_{g^{-1}}X}_{z}})=
B \big (\Ad_{g^{-1}} X,\,\Psi (H) \big ),$$
for all $\,g\in G\,$ and  $\,X \in \p$.
In order to show that such an identity holds true also for
 $\,X \in \k$, write $\,X=M + \sum_{\alpha}  K^\alpha$, 
  with $\, M \in \m\,$ and $\, K^\alpha=X^{\alpha}+\theta X^{\alpha}$. Set
  $\, P^\alpha=X^{\alpha}-\theta X^{\alpha}$. One has  
$$ \textstyle d_J^c \rho_J (\widetilde X_z)= \dds \rho_J (\exp siXa K^\C)=$$
$$\textstyle \dds \rho_J \big (a \exp si \Ad_{a^{-1}}XK^\C \big )=
\dds \rho_J \big (a \exp \big (s \textstyle \sum_\alpha
\sin \alpha(H) P^\alpha \big ) K^\C\big)=$$
$$ \textstyle \dds \rho_J \big (\exp \big (s \textstyle\sum_\alpha
 \frac{\sin \alpha(H)}{\cos \alpha(H)}P^\alpha \big ) aK^\C \big )=0\,,$$
where the last equality follows from the  $\,G$-invariance of $\,\rho_J$.
Since $\,\a \perp_B \k\,$, it follows  that
 $\,d_J^c\rho_J (\widetilde X_z)=B \big (X,\,\Psi(H) \big )$, as wished. 
 
Next we need to show that $\,-dd_J^c\rho_J(\widetilde Z_z, \widetilde W_z)=\omega_J(\widetilde Z_z, \widetilde W_z),$ 
for all $\,Z ,~W \in\p^\C$.
Since  both forms are $\,J$-invariant, 
it is enough to  show that
 $$\,-dd_J^c \rho_J(\widetilde X_z,\,\widetilde W_z)
 = \omega_J(\widetilde X_z,\,\widetilde W_z)\,$$
  for all $\,X\in \p\,$
and $\,W\in \p^\C$. Moreover, since both forms are $\,G$-invariant 
 it is sufficient to prove the above identity at points $\,z=aK^\C$, with $\,a= \exp iH\,$ and $\,H \in \Omega'\,$ (see (\ref{REGULAR})).
Write $\,W=U+iV$, with 
 $\,U,\, V \in \p$.  Then by Lemma~\ref{DIFFER}, one has 
 $$-dd^c \rho_J (\widetilde{X}_z,\widetilde W_z)=\,d\mu_J^X(\widetilde W_z) = d\mu_J^X(\widetilde U_z)+d\mu_J^X(\widetilde {iV_z)} .$$ 
 
 \sn
The first   summand on the right-hand side   is zero:
$$\textstyle d\mu_J^X(\widetilde U_z) =  
   \dds B \big (\Ad_{\exp -sU} X,\,\Psi (H) \big )=
  \dds B \big (X-s[U,X],\,\Psi (H) \big )=0\,,$$
   since $\,[U,X] \in \k\,$ and $\,\k \perp_B \a$.   

\sn 
For the second summand,   write $\,V=V_\a + \sum_\alpha
 Q^\alpha$, where $V_\a\in\a$ and $Q^\alpha=V^\alpha-\theta V^\alpha\in\p[\alpha]$.
From Lemma \ref{CURVESSTA}(i), we get
  $$\,\textstyle d\mu_J^X(\widetilde {iV}_z)= \dds \mu_J^X(\exp sC \exp i(H +sV_\a) K^\C)=$$
  $$=  \textstyle \dds B \big (X-s[C,\,X],\,\Psi ({H +sV_\a}) \big )=  
  -B \big ([C,\,X],\,\Psi ({H})\, \big ) +B \big (X,\,(\Psi_*)_H V_\a \big ) =$$
   $$= \textstyle B \big (X,\,[C,\,\Psi ({H})]\, \big )+B \big (X,\,(\Psi_*)_H V_\a \big )  \,, $$
   where
   $$\,C = -\textstyle \sum_\alpha \frac {\cos \alpha(H)}{\sin \alpha(H)}
  K^\alpha\,\quad\hbox{and}\quad  K^\alpha=V^\alpha+\theta V^\alpha.$$
Hence  
 $$\,[C,\,\Psi(H)] = \textstyle 
 \sum_\alpha \frac {\alpha (\Psi(H))\cos \alpha(H)}{\sin \alpha(H)}
  Q^\alpha\,.$$
In addition, by (\ref{EFFE}), (\ref{E}) and    Lemma \ref{DIFFERENTIAL} (ii), one obtains
$$(\Psi_*)_{H}V_\a+ [C,\,\Psi(H)] =  (\Psi_*)_{H}E_a^{-1}F_a V.$$
As a result,
$$-dd^c_J \rho_J (\widetilde{X}_z,\widetilde W_z)= 
 B \big (X\,,(\Psi_*)_{H} E_a^{-1}F_a{V} \big )\,, $$
 and, for $\,Z=X+iY$, one has   
$$\,-dd^c_J\rho_J(\widetilde Z_z,\,\widetilde W_z)= -dd^c_J\rho_J(\widetilde X_z,\,\widetilde W_z)+dd^c
_J\rho_J(\widetilde Y_z,\,\widetilde {iW_z})=$$
$$=- {\rm Im}B \big ( Z,\,(\Psi_*)_{H} E_a^{-1}F_a \overline W \big )\, .$$
From  Lemma \ref{FORMS2}(i), it follows    that
$\,-dd^c_J\rho_J =\omega_J,$
as desired.
\end{proof}

\bn

The plurisubharmonicity of $\,\rho_J\,$  will also be proved in Proposition
\ref{POSITIVE} where we show that  $\,\omega_J(\,\cdot\,, J\,\cdot\,)\,$ is positive definite.


\bigskip
\section{Proof of the theorem}
\label{PROOF}
\bigskip

In this section we carry out the proof of the main theorem, mainly by collecting
 results proved in the previous sections. As a preliminary step 
 we show that the forms defined in Section \ref{STRUCTURE} are K\"ahler with respect to the corresponding $($almost$)$ complex structures.


\begin{prop}
\label{POSITIVE}
The $\,G$-invariant forms $\,\omega_I, \, \omega_J, \, \omega_K\,$ are K\"ahler with respect to the corresponding
$($almost$\,)$ complex structures $\,I,\,J,\,K$. Moreover, they define the same Riemannian metric
$$g(\,\cdot\,,\,\cdot\, )=\omega_I (\,\cdot\,,I\,\cdot\, )=\omega_J(\,\cdot\,,J\,\cdot\,)=
 \omega_K(\,\cdot\,,K\,\cdot\, )\,.$$ 
\end{prop}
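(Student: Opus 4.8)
The plan is to establish the three claims of Proposition \ref{POSITIVE} in the following order: first verify the three metric identities coincide pointwise, then deduce positive-definiteness, and finally conclude the Kähler property by invoking the closedness results already proved. By $\,G$-invariance of all the forms and complex structures, it suffices to work at a slice point $\,z=aK^\C\,$ with $\,a=\exp iH\,$ and $\,H\in\Omega'\,$ (the regular set is dense, so positivity there extends to all of $\,\Xi\,$ by continuity). At such a point I would express everything in terms of the operators $\,F_a,\,E_a,\,(\Psi_*)_H\,$ and the relation $\,Ia_*Z=a_*L_a\overline Z\,$ of Lemma \ref{STRUTTURACOMPLESSA}, where $\,L_a=F_aI_0F_a^{-1}$.

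First I would compute $\,\omega_I(\,\cdot\,,I\,\cdot\,)\,$ at $\,z\,$ on vectors $\,a_*Z,\,a_*W\,$ with $\,Z,W\in\p^\C$. Using Definition \ref{FORMS} together with $\,Ia_*W=a_*L_a\overline W\,$ and the identity $\,F_aL_a=I_0F_a\,$, a short computation should reduce $\,\omega_I(a_*Z,Ia_*W)\,$ to $\,{\rm Re}\,B(I_0F_aZ,I_0F_a\overline W)={\rm Re}\,B(F_aZ,F_a\overline W)$, since $\,I_0\,$ is an isometry for $\,B$. Writing $\,Z=X+iY\,$ and recalling that $\,F_a\,$ preserves $\,\p\,$ for $\,H\in\Omega\,$ (by (\ref{EFFE})), this becomes $\,B(F_aX,F_aX)+B(F_aY,F_aY)\,$, a sum of squares of the positive-definite inner product $\,-B|_{\k}\oplus B|_{\p}\,$ restricted appropriately; the key point is that $\,-B\,$ is positive definite on $\,\p\,$ and $\,F_a\,$ is invertible on $\,\Xi'$, so $\,g\,$ is positive definite. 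I would then run the analogous computations for $\,\omega_J(\,\cdot\,,J\,\cdot\,)\,$ and $\,\omega_K(\,\cdot\,,K\,\cdot\,)$, using the formulas of Lemma \ref{FORMS2}(ii) for $\,\omega_J$; the expectation is that all three collapse to the same expression $\,{\rm Re}\,B(F_aZ,F_a\overline W)$, confirming they define a single metric $\,g$.

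With the metric identities and positivity in hand, the Kähler property follows from two inputs already established: each $\,\omega_\bullet\,$ is invariant under its corresponding complex structure (Lemma \ref{PROPERTIFORMS}), and each is closed — $\,\omega_I,\omega_K\,$ by Lemma \ref{FORMS2}(iii), and $\,\omega_J\,$ by Proposition \ref{POTENTIAL}, which exhibits $\,\rho_J\,$ as a potential so that $\,\omega_J=-dd^c_J\rho_J\,$ is automatically closed. A closed, $\,\mathcal I$-invariant $\,2$-form whose associated symmetric tensor $\,\omega(\,\cdot\,,\mathcal I\,\cdot\,)\,$ is positive definite is by definition a Kähler form, so this assembles the claim for each of $\,I,J,K$.

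The main obstacle I anticipate is the bookkeeping in showing that the three seemingly different expressions for $\,g\,$ genuinely coincide, particularly matching the $\,\omega_J\,$ computation (which involves the asymmetric operator $\,F_aI_0F_a^{-1}\,$ in Lemma \ref{FORMS2}(ii)) against the cleaner $\,\omega_I\,$ and $\,\omega_K\,$ formulas. The resolution should come from the self-adjointness of $\,(\Psi_*)_H\,$ (Lemma \ref{DIFFERENTIAL}) and the commutation of $\,F_a,E_a,(\Psi_*)_H\,$, together with the defining relation $\,L_a^2=-Id\,$ (which holds since $\,I^2=-Id$); these let one transfer operators across the Killing form and cancel the factors $\,\cos\alpha(H)\,$ that distinguish the block actions of $\,F_a$. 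A secondary subtlety is ensuring positivity extends from $\,\Xi'\,$ to all of $\,\Xi$: on the non-regular locus some $\,\cos\alpha(H)\,$ may vanish and $\,F_a\,$ degenerate, but since $\,\Omega'\,$ is dense in $\,\Omega\,$ and $\,g\,$ is real-analytic, positive semi-definiteness is automatic and the strict plurisubharmonicity of $\,\rho_J\,$ from Proposition \ref{POTENTIAL} guarantees non-degeneracy everywhere.
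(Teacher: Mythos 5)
Your overall architecture is the same as the paper's (combine the $\mathcal I$-invariance from Lemma \ref{PROPERTIFORMS} and the closedness from Lemma \ref{FORMS2}(iii) and Proposition \ref{POTENTIAL} with a pointwise positivity computation on the slice), but the central computation rests on a false identity. You invoke $F_aL_a=I_0F_a$; what Lemma \ref{STRUTTURACOMPLESSA} actually gives is $L_a=F_aI_0F_a^{-1}$, i.e.\ $L_aF_a=F_aI_0$, and $F_a^2$ does \emph{not} commute with $I_0$ (it acts by $\cos^2\alpha(H)$ on $\p[\alpha]$ while $I_0$ permutes blocks with different $\alpha$). The correct value, using Lemma \ref{FORMS2}(ii), is
$$\omega_I(a_*Z,\,Ia_*W)=\omega_I(a_*Z,\,a_*L_a\overline W)={\rm Re}\,B\bigl(I_0Z,\;F_aI_0F_a^{-1}\overline W\bigr),$$
which is \emph{not} ${\rm Re}\,B(F_aZ,F_a\overline W)$: testing on $Z=W=A_j$ gives $\cos\lambda_j(H)\,B(A_j,A_j)$ for the former and $B(A_j,A_j)$ for the latter. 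The metric is genuinely anisotropic: on a block $\p[\alpha]$ with $I_0\p[\alpha]=\p[\beta]$ it carries the factor $\cos\beta(H)/\cos\alpha(H)$, and positivity holds because these factors are positive on all of $\Omega$ (where $|\alpha(H)|<\pi/2$) and $B$ is positive definite on $\p$ — note also your sign slip: it is $B$, not $-B$, that is positive on $\p$ for a non-compact symmetric space, which matters since your "sum of squares" would otherwise come out negative. This is exactly how the paper argues, via $\omega_J(\,\cdot\,,J\,\cdot\,)$ and the block decomposition (\ref{DECO}); your claimed collapse to ${\rm Re}\,B(F_aZ,F_a\overline W)$ would make the three expressions agree only by accident of both being positive.

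Two smaller points. First, one must also check that $a_*\p$ and $a_*i\p=Ja_*\p$ are orthogonal for the symmetric tensor (the paper verifies $\omega_J((ga)_*X,J(ga)_*iV)=0$), since positivity on $a_*\p$ plus $J$-invariance only then yields positivity on all of $T_z\Xi$; your sketch does not address the cross terms. Second, the retreat to $\Omega'$ and the density/strict-psh patch is unnecessary and based on a misreading: on $\Omega$ the eigenvalues $\cos\alpha(H)$ of $F_a$ never vanish (this is stated right after (\ref{EFFE})), so $F_a$ is invertible everywhere on the slice; the regular set $\Omega'$ only matters for operators involving $\sin\alpha(H)$, none of which enter this computation. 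The direct calculation is valid, and positive, on all of $\Omega$.
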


 \begin{proof} 
 The invariance of $\,\omega_I$, $\,\omega_J$, $\,\omega_K\,$
 with respect to the corresponding almost complex structures was shown in
 Lemma \ref{PROPERTIFORMS}.
Their closeness was proved  in Lemma \ref{FORMS2}(iii) and
Proposition \ref{POTENTIAL}. From Definition \ref{FORMS} it is easy to check that 
 $\,\omega_I (\,\cdot\,,I\,\cdot\, )=\omega_J(\,\cdot\,,J\,\cdot\,)=
 \omega_K(\,\cdot\,,K\,\cdot\, )$.  In order to prove that the forms define a
 Riemannian metric, note that 
 $$\omega_J((ga)_*X,\,J(ga)_*iV)={\rm Im}B(I_0X,F_aI_0F_{a^{-1}} V)=0\,,$$
 for every $\,X \in \p\,$ and $\,iV \in i\p$. As  $\,\omega_J$ is $J$-invariant,
  it is enough to check  that 
$$\omega_J((ga)_*X,J(ga)_*X)={\rm Re}B(I_0X,F_aI_0F_{a^{-1}} X)>0$$
for every $\,X \in \p \setminus\{0\}$.
Since  the blocks of  decomposition  (\ref{DECO}) are eigenspaces of the map $\,F_a\,$ (see (\ref{EFFE})) and    are permuted by the complex structure $\,I_0\,$ (see (\ref{CPLX0})), it is sufficient to compute
 $\, {\rm Re}B(I_0P^\alpha,F_aI_0F_{a^{-1}} P^\alpha),$  for $\alpha  \in \Sigma^+\cup\{0\}$.
For this, note that if $\,I_0P^\alpha \in \p[\beta]\,$ (with the convention $\,\p[0]=\a\,$), one obtains  
$$ \textstyle {\rm Re}B(I_0P^\alpha,\,F_aI_0F_a^{-1}P^\alpha)=\frac{\cos \beta(H)}{\cos
\alpha(H)}B(I_0P^\alpha,\,I_0P^\alpha)=\frac{\cos \beta(H)}{\cos
\alpha(H)}B(P^\alpha,\,P^\alpha)\,,$$
which is strictly positive for all $\,H\in\Omega$.  \end{proof}

\bigskip

The next two lemmas are concerned with the uniqueness question for an arbitrary 
 $G$-invariant hyper-K\"ahler structure $({\mathcal I}, \,{\mathcal J}, \,{\mathcal K},\,
 \omega_{\mathcal I},\,\omega_{\mathcal J}, \,\omega_{\mathcal K})$ with the property that $\,{\mathcal J}=J_{ad}\,$ and the restriction of the K\"ahler structure $({\mathcal I},\,\omega_{\mathcal I})$ to $\,\p\cong T_{eK^\C}G/K\,$ coincides
 with the standard K\"ahler structure $(I_0,\,\omega_0)$ of $\,G/K$.


\bigskip
\begin{lem}
\label{OLOSIMPLECTIC} 
Let $\,G/K\,$ be an irreducible non-compact  Hermitian symmetric space.
Assume that $\,\omega_{\mathcal I}\,$ and $\,\omega_{\mathcal K}\,$ 
are elements of a $\,G$-invariant, hyper-K\"ahler structure on $\,\Xi\,$
 such that $\,{\mathcal J}=J_{ad}$.
Then 
the $\,{\mathcal J}$-holomorphic symplectic form
$\,\omega_{\mathcal I}-i\omega_{\mathcal K}\,$ 
is locally
$\,G^\C$-invariant. 
\end{lem}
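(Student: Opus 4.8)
The plan is to reduce everything to one structural fact and a short computation with Cartan's formula. Set $\,\sigma:=\omega_{\mathcal I}-i\omega_{\mathcal K}$. As recorded in the statement (and in the introduction), on a hyper-K\"ahler manifold this combination is $\,{\mathcal J}$-holomorphic, i.e. of type $(2,0)$; since on $\,\Xi\,$ we have $\,{\mathcal J}=J_{ad}$, which is just the complex structure of $\,G^\C/K^\C$, being $(2,0)$ here means being holomorphic for the ambient complex structure. First I would record the two properties of $\,\sigma\,$ that I need. Because $\,\omega_{\mathcal I}\,$ and $\,\omega_{\mathcal K}\,$ are K\"ahler forms, $\,\sigma\,$ is closed; and because the whole structure is $\,G$-invariant, $\,\sigma\,$ is $\,G$-invariant. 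By Cartan's formula $\,\mathcal L_{\widetilde X}\sigma=d\,\iota_{\widetilde X}\sigma+\iota_{\widetilde X}d\sigma\,$ together with $\,d\sigma=0$, the $\,G$-invariance reads $\,d\,\iota_{\widetilde X}\sigma=0\,$ for every $\,X\in\g$.

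The key step is to pass from the real directions $\,\g\,$ to the complex directions $\,i\g$. Since the orbit maps $\,g\mapsto g\cdot z\,$ are holomorphic, their differentials at the identity are $\,\C$-linear, whence the fundamental vector fields of the $\,G^\C$-action satisfy $\,\widetilde{iX}=J_{ad}\,\widetilde X\,$ for $\,X\in\g$. On the other hand, a form of type $(2,0)$ obeys $\,\iota_{J_{ad}V}\sigma=i\,\iota_V\sigma\,$ for every real vector field $\,V\,$ (immediate from $\,dz(J_{ad}V)=i\,dz(V)\,$ in holomorphic coordinates). Combining these two identities with $\,d\sigma=0\,$ I would compute
$$\mathcal L_{\widetilde{iX}}\sigma=d\,\iota_{\widetilde{iX}}\sigma=d\,\iota_{J_{ad}\widetilde X}\sigma=i\,d\,\iota_{\widetilde X}\sigma=i\,\mathcal L_{\widetilde X}\sigma=0.$$
Hence $\,\mathcal L_{\widetilde Y}\sigma=0\,$ holds for $\,Y\in\g\,$ and for $\,Y\in i\g$, and therefore, by $\,\R$-linearity of the Lie derivative together with $\,\g^\C=\g\oplus i\g$, for every $\,Y\in\g^\C$. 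This is exactly the assertion that $\,\omega_{\mathcal I}-i\omega_{\mathcal K}\,$ is locally $\,G^\C$-invariant; the qualifier ``locally'' is unavoidable because $\,G^\C\,$ need not preserve $\,\Xi$, although the fields $\,\widetilde Y\,$ are globally defined on $\,\Xi$.

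The only genuinely substantive input is the $(2,0)$-type of $\,\sigma\,$ with respect to $\,{\mathcal J}$, that is the holomorphic-symplectic property of the hyper-K\"ahler structure; once this is available, the remainder is a purely formal manipulation resting on the holomorphy of the $\,G^\C$-action and on Cartan's formula, and I anticipate no further obstacle. I would only be careful to state cleanly the two elementary facts that carry the argument, namely the complex-linearity $\,\widetilde{iX}=J_{ad}\widetilde X\,$ and the contraction identity $\,\iota_{J_{ad}V}\sigma=i\,\iota_V\sigma$.
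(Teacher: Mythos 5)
Your argument is correct and is in substance the same as the paper's: the paper simply invokes the analytic continuation principle for the $\,G$-invariant, $\,{\mathcal J}$-holomorphic form $\,\sigma=\omega_{\mathcal I}-i\omega_{\mathcal K}\,$ under the holomorphic local $\,G^\C$-action, and your computation $\,\mathcal{L}_{\widetilde{iX}}\sigma=d\,\iota_{J_{ad}\widetilde X}\sigma=i\,d\,\iota_{\widetilde X}\sigma=i\,\mathcal{L}_{\widetilde X}\sigma=0\,$ is exactly the explicit infinitesimal form of that principle. The two inputs you isolate, namely $\,\widetilde{iX}=J_{ad}\widetilde X\,$ (holomorphy of the action) and $\,\iota_{J_{ad}V}\sigma=i\,\iota_V\sigma\,$ (the $(2,0)$-type of $\,\sigma$), are precisely what the paper's one-line proof rests on.
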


\medskip
\begin{proof} Recall that the local action of  $\,G^\C\,$ on 
$\,\Xi\,$ is ${\mathcal J}$-holomorphic and  the complex form $\,\omega_{\mathcal I}-i\omega_{\mathcal K}\,$
is $\,G$-invariant and  $\,{\mathcal J}$-holomorphic. Since $\,G\,$ is a real form of $\,G^\C$, the result follows from
the analytic continuation principle. 
\end{proof}

\medskip

Denote by  $\, I_0: \p^\C \to \p^\C\,$ and by $\,\bar I_0: \p^\C \to \p^\C\,$ the 
linear and the anti-linear extension
of $\,I_0:\p \to \p$, respectively.
Then $\,\bar I_0 Z= I_0 \overline Z$.


\bigskip
\begin{lem}
\label{INITIAL}
Assume that $\,\omega_{\mathcal I}$, $\,\omega_{\mathcal K}$ and $\,{\mathcal J}\,$ 
are elements of a $\,G$-invariant, hyper-K\"ahler structure on $\,\Xi\,$ with
$\,{\mathcal J}=J_{ad}\,$ and such that the K\"ahler structure $\,({\mathcal I},\,\omega_{\mathcal I})\,$ coincides with
$\,(I_0,\,\omega_0)\,$ when restricted to $\,G/K$. 
Then the restrictions of $\,{\mathcal I}\,$ and $\,\omega_{\mathcal I}\,$ to $\,\p^\C \cong T_{eK^\C}G^\C/K^\C\,$ coincide with  
$\,\bar I_0\,$ and $\,{\rm Re}B(I_0 \,\cdot\,, \, \cdot\,)$, respectively.
\end{lem}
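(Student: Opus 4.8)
The goal is to pin down the $1$-jets of $\mathcal{I}$ and $\omega_{\mathcal{I}}$ at the base point $eK^\C$ purely from the hypotheses. The plan is to exploit the fact that $\omega_{\mathcal{I}} - i\omega_{\mathcal{K}}$ is $\mathcal{J}$-holomorphic together with its local $G^\C$-invariance established in Lemma \ref{OLOSIMPLECTIC}, and then to separate real and imaginary parts.

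First I would analyze the complex structure $\mathcal{I}$ on $\p^\C \cong T_{eK^\C}G^\C/K^\C$. The crucial point is that $\p^\C$ decomposes as $\p \oplus i\p$, where $\p \cong T_{eK}G/K$ is the tangent space to the zero section. On $\p$, the hypothesis that $(\mathcal{I}, \omega_{\mathcal{I}})$ restricts to $(I_0, \omega_0)$ forces $\mathcal{I}|_\p = I_0$. For the $i\p$ part, I would use the quaternionic relations: since $\mathcal{K} = \mathcal{I}\mathcal{J}$ and $\mathcal{J} = J_{ad}$ acts on $\p^\C$ as multiplication by $i$ (so $\mathcal{J}(X) = iX$, i.e. $\mathcal{J}$ sends $\p$ to $i\p$), one has $\mathcal{I}\mathcal{J} = -\mathcal{J}\mathcal{I}$, whence $\mathcal{I}(iX) = -i\,\mathcal{I}(X) = -iI_0 X = \overline{I_0 X}$ for $X \in \p$. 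Combining the two computations gives exactly $\mathcal{I}Z = \bar I_0 Z = I_0\overline Z$ for all $Z \in \p^\C$, matching the linear extension claim.

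Next I would determine $\omega_{\mathcal{I}}$ on $\p^\C$. Here the main tool is the $\mathcal{J}$-holomorphicity of $\omega_{\mathcal{I}} - i\omega_{\mathcal{K}}$: because $\mathcal{J}$ acts as $+i$ on the holomorphic tangent directions, this complex $2$-form is determined on $\p^\C$ by its value on the real slice $\p$, where $\omega_{\mathcal{K}}$ contributes nothing beyond what holomorphy dictates. The hypothesis gives $\omega_{\mathcal{I}}|_\p = \omega_0 = B(I_0\,\cdot\,, \,\cdot\,)$ on $\p$. I would then argue that the $\C$-bilinear (in the $\mathcal{J}$-holomorphic sense) extension forces $\omega_{\mathcal{I}}(Z, W) = \mathrm{Re}\,B(I_0 Z, W)$ once one also uses the $\mathcal{I}$-invariance of $\omega_{\mathcal{I}}$ (Lemma \ref{PROPERTIFORMS} type relation $\omega_{\mathcal{I}}(\,\cdot\,,\,\cdot\,) = \omega_{\mathcal{I}}(\mathcal{I}\,\cdot\,,\mathcal{I}\,\cdot\,)$) to control the imaginary directions. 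Concretely, evaluating on $\p \times \p$ gives $\mathrm{Re}\,B(I_0\,\cdot\,,\,\cdot\,)$ directly; evaluating on $\p \times i\p$ and $i\p \times i\p$ is then forced by $\mathcal{I}$-invariance combined with the already-determined $\mathcal{I} = \bar I_0$.

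The step I expect to be the main obstacle is the careful bookkeeping that $\mathcal{J} = J_{ad}$ genuinely acts as scalar multiplication by $i$ on $T_{eK^\C}G^\C/K^\C \cong \p^\C$, since $J_{ad}$ is by construction the complex structure of $G^\C/K^\C$ at the relevant points, and then using this to convert the abstract quaternionic identities $\mathcal{I}^2 = -\mathrm{Id}$, $\mathcal{I}\mathcal{J} = -\mathcal{J}\mathcal{I}$ into the explicit anti-linear behavior on $i\p$. Once the action of $\mathcal{J}$ is correctly identified, the remaining identities reduce to the routine separation of real and imaginary parts and a comparison with the formulas in Definition \ref{FORMS}.
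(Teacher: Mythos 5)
Your determination of $\,{\mathcal I}|_{\p^\C}\,$ is correct and is exactly the paper's argument: $\,{\mathcal J}=J_{ad}\,$ acts as multiplication by $\,i\,$ on $\,\p^\C$, and $\,{\mathcal I}{\mathcal J}=-{\mathcal J}{\mathcal I}\,$ together with $\,{\mathcal I}|_\p=I_0\,$ gives $\,{\mathcal I}(iX)=-iI_0X$, i.e. $\,{\mathcal I}=\bar I_0\,$ on $\,\p^\C$.

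The second half has a genuine gap. The entire content of the claim about $\,\omega_{\mathcal I}\,$ is the vanishing of the mixed block $\,\omega_{\mathcal I}(\p,\,{\mathcal J}\p)$. Your reduction via ${\mathcal J}$-bilinearity of $\,\omega_{\mathcal I}-i\omega_{\mathcal K}\,$ is circular at this point: the restriction of that form to $\,\p\times\p\,$ has imaginary part $\,-\omega_{\mathcal K}|_{\p\times\p}$, which is not given by the hypotheses, and ${\mathcal J}$-bilinearity shows $\,\omega_{\mathcal I}(X,iY)=\omega_{\mathcal K}(X,Y)$, so the unknown imaginary part \emph{is} the unknown mixed block. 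The step you propose to control it --- ${\mathcal I}$-invariance together with $\,{\mathcal I}=\bar I_0\,$ --- only yields the relation $\,\omega_{\mathcal I}(X,iY)=-\omega_{\mathcal I}(I_0X,iI_0Y)$, which has nonzero solutions: any bilinear pairing $\,\phi\,$ on $\,\p\,$ with $\,\phi(I_0X,I_0Y)=-\phi(X,Y)\,$ (e.g. $\,{\rm Re}(zw)\,$ on $\,\p\cong\C\,$ for $\,G=SL_2(\R)$) is compatible with it. So nothing in your argument forces the mixed block to vanish. The paper closes exactly this gap by actually using the local $\,G^\C$-invariance from Lemma \ref{OLOSIMPLECTIC}: $\,\omega_{\mathcal I}\,$ is invariant under the isotropy one-parameter group $\,\exp itZ_0\subset K^\C$, and substituting $\,\Ad_{\exp itZ_0}X=\cosh t\,X+\sinh t\,iI_0X\,$ into $\,\omega_{\mathcal I}(\Ad_{\exp itZ_0}X,\Ad_{\exp itZ_0}Y)=\omega_{\mathcal I}(X,Y)\,$ and extracting the $\,\cosh t\sinh t\,$ coefficient gives $\,\omega_{\mathcal I}({\mathcal J}\p,\,\p)=0$. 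You cite Lemma \ref{OLOSIMPLECTIC} in your opening sentence but never use the isotropy invariance in the actual argument; that is the missing idea. (An alternative repair would be $\,\Ad_K$-invariance plus Schur's lemma on the irreducible $\,K$-module $\,\p\,$ to kill the anticommuting invariant pairings, but that too requires more than what you wrote.)
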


\medskip
\begin{proof} Recall that the restriction of $\,{\mathcal J}=J_{ad}\,$
to $\,\p^\C\,$ is multiplication by $\,i$.
Since $\,{\mathcal I}{\mathcal J}=-{\mathcal J}{\mathcal I}$, for every $\,X \in \p\,$ one has $\,{\mathcal I}iX =-i{\mathcal I}X=-iI_0X$. This says
that the restriction of $\,{\mathcal I}\,$ to $\,\p^\C\,$ coincides with $\,\bar I_0$.

In a hyper-K\"ahler structure the form $\,\omega_{\mathcal I}$ is anti-${\mathcal J}$-invariant.
This determines its restriction  to $\,i\p={\mathcal J}\p$.

Then, in order to show that its restriction
 to $\,\p^\C\,$ coincides with $\,{\rm Re}B(I_0 \,\cdot\,, \, \cdot\,)$,
we are left to show that 
$\,\p\,$ and $\,{\mathcal J}\p\,$ are $\,\omega_{\mathcal I}$-orthogonal.

For this recall that, by Lemma \ref{OLOSIMPLECTIC},
for every $\,t \in \R\,$
the form $\,\omega_{\mathcal I}\,$ is $\,\exp itZ_0$-invariant.
Since for $\,X, \,Y \in \p\,$ one has 
$$\Ad_{\exp itZ_0}X=\cosh t\,X + \sinh t \,iI_0X \quad  {\rm and} \quad 
\Ad_{\exp itZ_0}Y=\cosh t\,Y + \sinh t \,iI_0Y\,,$$ it follows that 
$$\omega_{\mathcal I}(X,\,Y)=\omega_{\mathcal I}(\cosh tX + \sinh t iI_0X ,\,\cosh tY + \sinh t iI_0Y)=$$
$$ \cosh ^2 t \,\omega_{\mathcal I}(X ,\,Y) -\sinh^2 t \,\omega_{\mathcal I}(I_0X ,\,  I_0Y) +$$
$$\cosh t \sinh t \,\omega_{\mathcal I}(X ,\,{\mathcal J}I_0Y)+
\cosh t \sinh t \,\omega_{\mathcal I}({\mathcal J} I_0X ,\,Y )
\,.$$
Thus $\,\omega_{\mathcal I}(X ,\,{\mathcal J}I_0Y)+\omega_{\mathcal I}({\mathcal J} I_0X ,\,Y )=0\,$
which, by
$$\omega_{\mathcal I}(X ,\,{\mathcal J}I_0Y )=\omega_{\mathcal I}({\mathcal J} X ,\,I_0Y )=
\omega_{\mathcal I}({\mathcal J} I_0X ,\,Y )\,,$$
is equivalent to $\,\omega_{\mathcal I}({\mathcal J} X ,\,I_0Y )=0\,$ for every $\,X,\,Y\,$ in $\,\p$. That is,
$\,\p\,$ and $\,{\mathcal J}\p\,$ are $\,\omega_{\mathcal I}$-orthogonal, as wished.
\end{proof}

\bigskip
\begin{remark}
\label{INVARIANTIK}
Under the  assumptions of Lemma \ref{INITIAL}, 
the local $\,G^\C$-invariance of $\,\omega_{\mathcal I}\,$ and $\,\omega_{\mathcal K}\,$
$($Lemma \ref{OLOSIMPLECTIC}$\,)$ implies that  
$$\,\omega_{\mathcal I}((ga)_*Z , \,(ga)_*W )= {\rm Re} B(I_0Z ,\,  W )\,,$$
and 
$$\,\omega_{\mathcal K}((ga)_*Z , \,(ga)_*W )=\omega_{\mathcal K}(Z ,\,W ) =\omega_{\mathcal I}({\mathcal I}Z ,\,
{\mathcal I}{\mathcal J}W )=\omega_{\mathcal I}(Z ,\,{\mathcal J}W )
=-{\rm Im} B(I_0Z ,\,  W )\,.$$
\end{remark}


\bigskip
\noindent
{\bf Proof of the main Theorem}. 
Let $\,J=J_{ad}\,$ be the adapted complex structure on $\,\Xi$,  let $\,I\,$ be  the almost complex structure defined in Section \ref{I} and   $\,K :=IJ$. Then the usual algebraic properties 
 $$I^2=J^2=K^2=-Id=IJK $$
follow directly  from Definition \ref{CPLX}.
By Proposition \ref{POSITIVE}, the 2-forms $\,\omega_I$, $\,\omega_J\,$ and $\,\omega_K\,
$ defined in Section \ref{STRUCTURE} are K\"ahler with respect to the corresponding almost complex structures and 
all define the same Riemannian metric. 
In addition, they are closed in view of Lemma \ref{FORMS2}(iii)  and Proposition  \ref{POTENTIAL}.
Now Lemma 6.8 in \cite{Hit87} implies that $\,I$, $\,J\,$ and $\,K\,$ are   integrable.
This concludes the proof of the existence of a hyper-K\"ahler structure on $\,\Xi\,$ with the required properties.
The proof of Part (c) in the main theorem can be found in Section \ref{OMEGAI}.

\sn
Finally, we outline a proof of  uniqueness of the adapted hyper-K\"ahler structure. 
Let
$$({\mathcal I},\,{\mathcal J},\,{\mathcal K},\,\omega_{\mathcal I},\,\omega_{\mathcal J},\, \omega_{\mathcal K})$$
 be an
arbitrary 
 $\,G$-invariant hyper-K\"ahler structure  with the property that $\,{\mathcal J}=J_{ad}\,$ and the restriction of the
 K\"ahler structure $\,({\mathcal I},\,\omega_{\mathcal I})\,$ to $\,\p\,$ coincides with the standard K\"ahler structure
 $\,(I_0,\,\omega_0)\,$ of $\,G/K$.
By Lemma \ref{OLOSIMPLECTIC} and Lemma \ref{INITIAL},  the restriction of $\,({\mathcal I},\,\omega_{\mathcal I})\,$
to $\,\p^\C\,$ necessarily coincides with 
$\,(\bar I_0, \,{\rm Re}B(I_0\,\cdot\,,\,\cdot\,))$.
Moreover, the forms $\,\omega_{\mathcal I}\,$ and $\,\omega_{\mathcal K}\,$, being locally $\,G^\C$-invariant, are uniquely determined everywhere on $\,\Xi\,$ (see Rem. \ref{INVARIANTIK}).
Then the relations 
 $\,{\mathcal K}={\mathcal I}{\mathcal J}\,$ and $\,\omega_{\mathcal J}(\cdot,\cdot )
 =\omega_{\mathcal I}({\mathcal I}\cdot ,\cdot)\,$   show that the hyper-K\"ahler structure is uniquely determined if the
  complex structure $\,{\mathcal I}\,$ is.
In turn, because of  its $\,G$-invariance, $\,{\mathcal I}\,$ is uniquely determined by the map
 $\,\overline L:\Omega\to GL_\R(\p^\C)$, given by $\,H\to \overline L_H$, which describes  $\,{\mathcal I}\,$ along the slice.
 That is, 
 $${\mathcal I}a_*Z=a_*\overline L_H  Z.$$
 In our hyper-K\"ahler setting, the operator $\,\overline L_H\,$ is an anti-linear anti-involution and 
 the form $\,{\rm Re}B(I_0 \, \cdot, \,\cdot, \,)\,$ is $\,\overline L_H$-invariant.
Moreover, the condition $\,d\omega_{\mathcal J}=0\,$ yields  a system of first order differential equations in the real analytic components of $\,\overline L_H\,$ with initial conditions $\,\overline L_{0}=\bar I_0$. Then the uniqueness
of the solution can be obtained by applying Cauchy-Kowaleskaya theorem. In the case of $\,G=SL_2(\R)\,$ the details of this strategy
are carried out in Appendix A.
\qed


\bigskip
\section{A potential for $\,\omega_I$.}
\label{OMEGAI}

\bigskip
In this section we determine a $\,G$-invariant function  $\,\rho_I\,$ with the property that 
$\,\omega _I= -dd^c_I \rho_I + p^* \omega_0$, where $\,\omega_0\,$ is the standard $\,G$-invariant  
K\"ahler form on $\,G/K$.  
Since the projection $\,p: \Xi \to G/K\,$ is holomorphic  (Prop. \ref{OLOMAP}),
a potential of $\,p^*\omega_0\,$ is given by the pull-back  of a potential 
$\,\rho_0\,$ of  $\,\omega_0$. Then $\,\rho_0 \circ p+\rho_I\,$ is a potential of $\,\omega_I$.

 We adopt the same strategy used in Section \ref{POTMOMENT}.
As a preliminary step, for a class of smooth $\,G$-invariant functions $\,\rho: \Xi \to \R$, we determine the associated function $\,\mu^X : \Xi \to \R\,$ 
defined by $\,\mu^X(z):=d_I^c \rho(\widetilde X_z)$, 
 for $\,X \in \g$ (cf. Sect. \ref{POTMOMENT}).

 
 \bigskip
\begin{lem}
\label{MOMENTII} Given a smooth function $\,f:\R \to \R$, consider the 
$\,G$-invariant function $\,\rho:\Xi \to \R\,$ defined  by
$$\,\rho(gaK^\C):=  -\textstyle \frac{1}{4}\sum_{j=1}^r f(\lambda_j(H))B(A_j,A_j)\,.$$
For  $\,X\,$ in $\,\g\,$ one has 
$$\,\mu^X(gaK^\C) =\textstyle {\frac{1}{2}\sum_{j=1}^r} \widetilde f (\lambda_j(H))
B (\Ad_{g^{-1}} X ,\,[I_0A_j,\,H])\,,$$
where  $\,\widetilde f(t) = \frac{\sin t}{t\cos t}f'(t)$.
\end{lem}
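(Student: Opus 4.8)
The plan is to follow closely the computation in the proof of Proposition \ref{POTENTIAL}, replacing the complex structure there by $\,I\,$ and the function $\,\cos\,$ by the general profile $\,f$. First I would reduce to the slice: since $\,\rho\,$ and $\,I\,$ are $\,G$-invariant and $\,\widetilde X_{g\cdot z}=g_*\widetilde{\Ad_{g^{-1}}X}_z\,$ for $\,z=aK^\C$, one gets $\,\mu^X(gaK^\C)=\mu^{\Ad_{g^{-1}}X}(aK^\C)$, so it suffices to establish, for $\,Y\in\g\,$ and $\,z=aK^\C\,$ with $\,a=\exp iH$, $H\in\Omega'$, the identity $\,\mu^Y(z)=\frac12\sum_j\widetilde f(\lambda_j(H))B(Y,[I_0A_j,H])$; the general case then follows by taking $\,Y=\Ad_{g^{-1}}X\,$ and extending from $\,\Omega'\,$ to $\,\Omega\,$ by real-analyticity. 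Using $\,I_0A_j=-P^j\,$ (see (\ref{CPLX0})) and $\,[H,P^j]=\lambda_j(H)K^j\,$ (Lemma \ref{BASIS}), I would rewrite $\,[I_0A_j,H]=\lambda_j(H)K^j$, so that, recalling $\,\widetilde f(t)\,t=\frac{\sin t}{\cos t}f'(t)$, the target reduces to $\,\mu^Y(z)=\frac12\sum_j\frac{\sin\lambda_j(H)}{\cos\lambda_j(H)}f'(\lambda_j(H))B(Y,K^j)$.

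The core is to compute $\,\mu^Y(z)=d\rho(I\widetilde Y_z)$. Writing $\,\widetilde Y_z=a_*W\,$ with $\,W=\pi_\#\Ad_{a^{-1}}Y\in\p^\C$, Lemma \ref{STRUTTURACOMPLESSA} gives $\,I\widetilde Y_z=a_*L_a\overline W=a_*F_aI_0F_a^{-1}\overline W$. The key observation, exactly as in Proposition \ref{POTENTIAL}, is that $\,d\rho(a_*V)\,$ sees only the $\,\a$-component of $\,{\rm Im}(F_a^{-1}V)$: splitting $\,F_a^{-1}V=R+iS\,$ with $\,R,S\in\p$, the term $\,\widetilde R_z\,$ is tangent to the $\,G$-orbit and annihilated by $\,d\rho$, while Lemma \ref{CURVESSTA}(i) writes $\,\widetilde{iS}_z=\frac{d}{ds}\exp(sC)\exp i(H+sS_\a)K^\C\,$ with $\,C\in\k$, whence $\,d\rho(a_*V)=\frac{d}{ds}\big|_0\rho(\exp i(H+sS_\a)K^\C)\,$ by $\,G$-invariance. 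Here $\,F_a^{-1}V=I_0F_a^{-1}\overline W$, so the relevant quantity is $\,S_\a=(I_0\,{\rm Im}(F_a^{-1}\overline W))_\a$. I would then run through the decomposition $\,Y=Y_\p+M+\sum_\alpha K^\alpha\,$ ($Y_\p\in\p$, $M\in\m$, $K^\alpha\in\k[\alpha]$) and check that only the long-root pieces $\,K^j\in\k[\lambda_j]\,$ contribute: for $\,Y\in\p\,$ one has $\,F_a^{-1}\overline W=Y\,$ real so $\,S=0$; for $\,M\in\m$, $\,\widetilde M_z=0\,$ since $\,[H,M]=0\,$ and $\,M\in\k^\C$; and for $\,K^\alpha\,$ with $\,\alpha\,$ short, $\,W=-i\sin\alpha(H)P^\alpha\,$ (Lemma \ref{CURVESSTA}(iii)) gives $\,S=\frac{\sin\alpha(H)}{\cos\alpha(H)}I_0P^\alpha$, whose $\,\a$-component vanishes because $\,I_0P^\alpha\in\p[\beta]\,$ with $\,\beta\neq0\,$ by (\ref{CPLXBIS}). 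This mirrors the fact that $\,B(Y,K^j)\,$ only detects the $\,\k[\lambda_j]$-part of $\,Y$.

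For $\,Y=K^j\,$ the same computation gives $\,W=-i\sin\lambda_j(H)P^j$, hence $\,S=\frac{\sin\lambda_j(H)}{\cos\lambda_j(H)}I_0P^j=\frac{\sin\lambda_j(H)}{\cos\lambda_j(H)}A_j\in\a$, using $\,I_0P^j=A_j$. Plugging $\,S_\a\,$ into $\,\frac{d}{ds}\rho(\exp i(H+sS_\a)K^\C)\,$ and using $\,\lambda_k(A_j)=2\delta_{kj}\,$ (from the normalizations (\ref{NORMALIZ1})--(\ref{NORMALIZ2})) yields $\,\mu^{K^j}(z)=-\frac12 f'(\lambda_j(H))\frac{\sin\lambda_j(H)}{\cos\lambda_j(H)}B(A_j,A_j)$. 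To reassemble the general $\,Y$, I would write its $\,\k[\lambda_j]$-component as $\,c_jK^j\,$ and use the normalization identity $\,B(K^j,K^j)=-B(A_j,A_j)\,$ (which follows from $\,A_j=[\theta E^j,E^j]$, $[A_j,E^j]=2E^j\,$ and the isotropy of the root spaces $\,\g^{\pm\lambda_j}$) to get $\,c_j=-B(Y,K^j)/B(A_j,A_j)$; summing $\,\mu^Y=\sum_jc_j\mu^{K^j}\,$ then produces exactly $\,\frac12\sum_j\frac{\sin\lambda_j(H)}{\cos\lambda_j(H)}f'(\lambda_j(H))B(Y,K^j)$, as required, and density of $\,\Omega'\,$ in $\,\Omega\,$ finishes the proof. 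The main obstacle is the careful bookkeeping in computing $\,I\widetilde Y_z\,$ (the anti-linearity of $\,I\,$ in $\,\overline W\,$ and the extraction of the $\,\a$-part of the imaginary component) together with the sign and normalization $\,B(K^j,K^j)=-B(A_j,A_j)$; once these are pinned down the remaining steps are routine.
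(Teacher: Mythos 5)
Your proposal is correct and follows essentially the same route as the paper: reduction to the slice by $G$-invariance, decomposition of $X$ into its $\p$-, $\m$- and $\k[\alpha]$-components, the observation that only the long-root pieces $\k[\lambda_j]$ contribute (because $I_0P^\alpha$ has vanishing $\a$-component for $\alpha\neq\lambda_j$, which is exactly how the paper kills those terms via Lemma \ref{CURVESSTA}), and the explicit evaluation on $K^j$ using $I_0P^j=A_j$ and $\lambda_j(A_j)=2$. Your reassembly identity $B(K^j,K^j)=-B(A_j,A_j)$ is precisely the content of the paper's computation (\ref{PASSAGGI}), so the two arguments coincide in substance.
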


\medskip
\begin{proof}  The $\,G$-invariance of $\,\rho\,$ and $\,I$ implies that, 
$$ \, d_I^c\rho(\widetilde X_{g\cdot z})=d_I^c\rho( \widetilde {\Ad_{g^{-1}}X}_{z})\,$$
for every $\,z \in \Xi\,$ and $\,g \in G$. So it is sufficient to prove the lemma for  $g=e$ and $z=aK^\C$, with $a= \exp iH$ and $H \in \Omega'$ (see (\ref{REGULAR})).

Take $\,X \in \p$. Since
$\,I \widetilde X_z= \widetilde {I_0X}_z$,
the $\,G$-invariance of $\,\rho\,$ implies  that 
$\,d_I^c \rho (\widetilde X_z)=d \rho (\widetilde {I_0X}_z)=0.$

Next, take $X \in \k$ and decompose it as $X=M +\sum_\alpha K^\alpha$, as
in (\ref{DECO}), with $M \in \m$ and $K^\alpha= X^\alpha+ \theta X^\alpha\,$ in $\,\k[\alpha]$.
Since $\widetilde M_z=0$ for $z=aK^\C$, one has $\,d_I^c \rho (\widetilde M_z)=0$.

For $\,\alpha \not= \lambda_1, \dots, \lambda_r$ and 
$\,K^\alpha=X^\alpha+ \theta X^\alpha\,$ in $\,\k[\alpha]$, set
$\,P^\alpha= X^\alpha- \theta X^\alpha\,$ in $\,\p[\alpha]$. Also set 
$\,I_0P^\alpha=:P^\beta=X^\beta- \theta X^\beta$ in $\p[\beta]$
and $K^\beta=X^\beta+\theta X^\beta$. Note that $\,\beta \not=0$.
Then by (\ref{TILDEVSHAT}),  Lemma \ref{CURVESSTA}(iii) and (\ref{CPLXBIS}) one has 
$$d_I^c \rho( \widetilde {K^\alpha}_z )=
-d \rho (Ia_* \sin \alpha(H) iP^\alpha )=d \rho(a_*\sin \alpha(H) i
F_aI_0F^{-1}_aP^\alpha  )=$$
$$
d \rho \big ( a_* \textstyle  \frac {\sin \alpha(H)\cos \beta (H)}{
\cos \alpha(H)}iP^\beta \big )=
-d \rho \big ( \textstyle \frac {\sin \alpha(H)\cos \beta (H)}{ \sin \beta(H)
\cos \alpha(H)} \widetilde{K^\beta}_z \big )= 0\,.$$

\smallskip
For $\alpha=\lambda_l$, with $\,l=1, \dots\, r$, one has
$$d_I^c \rho  ( \widetilde{ K^l}_z)=
d \rho \big (\textstyle  a_* \sin \lambda_l(H) iF_a
I_0F^{-1}_aP_l \big ) =d \rho \big(
a_*\frac{\sin \lambda_l(H)}{\cos \lambda_l(H)}iA_l \big)=$$
$$
\textstyle
-\frac{1}{4} \dds  \sum_j f \big(\lambda_j \big ( H + s \frac{\sin \lambda_l(H)}{\cos \lambda_l(H)} A_l\big)\big )B(A_j,A_j)=$$
$$\textstyle -\frac{1}{2}\frac{\sin \lambda_l(H)}{\cos \lambda_l(H)}f'(\lambda_l(H))B(A_l,\,A_l)\,.$$
Since
\begin{equation}
\label{PASSAGGI}
\begin{aligned}
\textstyle
B (A_l ,\,A_l )=&B (I_0 A_l ,\,I_0 A_l)= \cr
\textstyle
-\frac{1}{\lambda_l(H)}B  ([H,\,K^l] ,\,\,I_0 A_l )=&
\textstyle -\frac{1}{\lambda_l(H)}B (K^l ,\,[I_0A_l,\, H])\,,
\end{aligned}
\end{equation}
the above computation yields  $$\,\textstyle d_I^c\rho(\widetilde X_{aK^\C}) = \frac{1}{2}\sum_j
\widetilde f(\lambda_j(H))B (X ,\,[I_0A_j,\, H])\,,$$
  as desired.
  \end{proof}

\bigskip
\begin{prop}
\label{POTENTIALII} Fix a function $\,f_I: \R \to \R\,$ with the property that
$\, \frac{\sin t}{\cos t} f_I'(t)= \cos t -1$.
The $\,G$-invariant function $\,\rho_I:\Xi \to \R\,$ defined by 
$$\, \textstyle \rho_I(ga K^\C):=  -\frac{1}{4}\sum_{j=1}^r f_I(\lambda_j(H))B(A_j,A_j)\,,$$
is a potential for   $\,\omega_I- p^*\omega_0$.
In particular the form $\,\omega_I=-dd_I^c \rho_I+p^*\omega_0\,$ is closed
$(cf.\  Prop.\ \ref{FORMS2}\,(iii) )$.
\end{prop}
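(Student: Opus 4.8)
The plan is to repeat, almost verbatim, the strategy of Section~\ref{POTMOMENT}, with $\,\omega_J$, $\,d^c_J\,$ replaced by $\,\omega_I-p^*\omega_0$, $\,d^c_I$. Put $\,\eta_1:=-dd^c_I\rho_I\,$ and $\,\eta_2:=\omega_I-p^*\omega_0$. Both are $\,G$-invariant and $\,I$-invariant: $\,\omega_I\,$ is $\,I$-invariant by Lemma~\ref{PROPERTIFORMS}; $\,p^*\omega_0\,$ is $\,I$-invariant because $\,p\,$ is holomorphic (Proposition~\ref{OLOMAP}) and $\,\omega_0\,$ is $\,I_0$-invariant; and $\,\eta_1\,$ is of type $\,(1,1)\,$ with respect to $\,I$, hence $\,I$-invariant. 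Since both forms are $\,G$-invariant it suffices to compare them at slice points $\,z=aK^\C$; and since they are real-analytic and $\,\Xi'\,$ is dense in $\,\Xi$, we may take $\,a=\exp iH\,$ with $\,H\in\Omega'\,$ and extend the final identity to $\,\Xi\,$ by continuity.

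First I would compute $\,\mu^X_I(z):=d^c_I\rho_I(\widetilde X_z)\,$ by means of Lemma~\ref{MOMENTII}. Since $\,f_I\,$ solves $\,\frac{\sin t}{\cos t}f_I'(t)=\cos t-1$, the auxiliary function there is $\,\widetilde f_I(t)=\frac{\sin t}{t\cos t}f_I'(t)=\frac{\cos t-1}{t}$; moreover Lemma~\ref{BASIS} and~(\ref{CPLX0}) give $\,[I_0A_j,H]=[H,P^j]=\lambda_j(H)K^j$. Substituting, the $\,\lambda_j(H)\,$ factors cancel and one obtains the clean expression
\[
\mu^X_I(gaK^\C)=\frac12\sum_{j=1}^r\big(\cos\lambda_j(H)-1\big)\,B\big(\Ad_{g^{-1}}X,\,K^j\big),\qquad X\in\g.
\]
By Lemma~\ref{DIFFER} this already yields $\,\iota_{\widetilde X}\eta_1=d\mu^X_I\,$ for every $\,X\in\g$.

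The heart of the argument is then to verify $\,\iota_{\widetilde X}\eta_2=d\mu^X_I$, i.e. $\,d\mu^X_I(\widetilde W_z)=\eta_2(\widetilde X_z,\widetilde W_z)\,$ for all $\,X\in\g\,$ and $\,W=U+iV\in\p^\C$. For the left-hand side I would differentiate $\,\mu^X_I\,$ along $\,\widetilde U_z\,$ (curve $\,\exp sU$) and along $\,\widetilde{iV}_z\,$ (curve $\,\exp sC\exp i(H+sV_\a)K^\C\,$ of Lemma~\ref{CURVESSTA}(i)), producing the terms $\,\sin\lambda_j(H)\lambda_j(V_\a)B(X,K^j)$, $\,(\cos\lambda_j(H)-1)B([U,X],K^j)\,$ and $\,(\cos\lambda_j(H)-1)B([C,X],K^j)$, exactly as in Proposition~\ref{POTENTIAL}; for $\,X\in\p\,$ the first and third vanish, since then $\,B(X,K^j)=0\,$ and $\,[C,X]\in\p\perp_B\k$, leaving only the $\,[U,X]\,$ term. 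For the right-hand side I would use Definition~\ref{FORMS} for $\,\omega_I\,$ and the identity $\,p_*(\widetilde Z_z)={\rm Re}\,Z\,$ from the proof of Proposition~\ref{OLOMAP} for $\,p^*\omega_0$, expressing the $\,\k$-directions $\,\widetilde{K^\alpha}_z\,$ through $\,\p^\C\,$ by Lemma~\ref{CURVESSTA}(iii). The two sides are then reconciled by root-theoretic identities such as $\,B(K^j,K^j)=-B(P^j,P^j)\,$ and trigonometric relations of the same type as~(\ref{TRIGO}); these hold precisely because $\,f_I\,$ satisfies the prescribed differential equation, the choice of $\,\cos t-1\,$ (rather than $\,\cos t$) being also what keeps $\,\widetilde f_I$, and hence $\,\rho_I$, regular across the walls $\,\lambda_j=0$.

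Finally I would conclude that $\,\eta:=\eta_1-\eta_2\,$ vanishes. By the previous step $\,\eta\,$ is $\,I$-invariant and satisfies $\,\iota_{\widetilde X}\eta=0\,$ for all $\,X\in\g$. On $\,\Xi'\,$ the vectors $\,\{\widetilde X_z:X\in\g\}\,$ fill $\,a_*\p\oplus a_*i\big(\bigoplus_{\alpha\in\Sigma^+}\p[\alpha]\big)$, while their $\,I$-images recover the missing slice part $\,a_*i\a$, because $\,I_0\,$ interchanges $\,\a\,$ and $\,\bigoplus_{j}\p[\lambda_j]\,$ (see~(\ref{CPLX0})); hence $\,\{\widetilde X_z:X\in\g\}\,$ together with their $\,I$-images span $\,T_z$. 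Since $\,\eta(Iu,v)=-\eta(u,Iv)\,$ for $\,I$-invariant $\,\eta$, it follows that $\,\iota_u\eta=0\,$ for every $\,u$, so $\,\eta=0\,$ on $\,\Xi'\,$ and, by continuity, on $\,\Xi$; the closedness of $\,\omega_I=-dd^c_I\rho_I+p^*\omega_0\,$ is then immediate, being a sum of an exact and a closed form. The main obstacle is the reconciliation in the third paragraph: unlike the $\,\omega_J\,$ case, where $\,J$-invariance reduced everything to $\,X\in\p$, the $\,I$-invariant setting genuinely requires the directions $\,X\in\k$, hence carrying the extra $\,B([U,X],K^j)\,$ and $\,B([C,X],K^j)\,$ brackets through the trigonometry.
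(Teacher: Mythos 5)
Your plan follows the paper's own route --- reduce everything via Lemma \ref{DIFFER} and Lemma \ref{MOMENTII} to comparing $d\mu^X_I$ with $(\omega_I-p^*\omega_0)(\widetilde X_z,\cdot)$ at slice points over $\Omega'$, then verify block by block --- so in substance it is the same proof, and your closed form $\mu^X_I(gaK^\C)=\frac12\sum_j(\cos\lambda_j(H)-1)B(\Ad_{g^{-1}}X,K^j)$ is a correct and tidy repackaging of Lemma \ref{MOMENTII} via $[I_0A_j,H]=\lambda_j(H)K^j$. The one genuinely different step is your treatment of the directions $a_*i\a$: the paper cannot apply Lemma \ref{DIFFER} there (they are not induced by $\g$) and instead computes $dd^c_I\rho_I(\widetilde{iA}_z,\widetilde{iA'}_z)$ directly from Cartan's formula for the commuting fields $\widetilde{iA}$, $\widetilde{iA'}$; you instead use that both $-dd^c_I\rho_I$ (of type $(1,1)$ --- legitimate, since the integrability of $I$ is secured in Section \ref{PROOF} independently of this proposition) and $\omega_I-p^*\omega_0$ (Lemma \ref{PROPERTIFORMS} plus Proposition \ref{OLOMAP}) are $I$-invariant, and that $I$ carries $a_*i\bigoplus_j\p[\lambda_j]$, which is spanned by the fields $\widetilde{K^j}$, onto $a_*i\a$; this spanning argument is valid and eliminates one case. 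The caveat is that the decisive content --- checking that $d\mu^X_I(\widetilde W_z)$ actually equals $(\omega_I-p^*\omega_0)(\widetilde X_z,\widetilde W_z)$ on each pair of blocks $\a$, $\p[e_k]$, $\p[e_k\pm e_l]$, $\p[\lambda_j]$, which is where the equation $\frac{\sin t}{\cos t}f_I'(t)=\cos t-1$ is actually used and which occupies essentially all of the paper's Section \ref{OMEGAI} --- is asserted ("reconciled by root-theoretic identities\dots") rather than carried out. The sketched cancellations do occur (e.g.\ $\cos 2t_k+\cos 2t_l=2\cos(t_k+t_l)\cos(t_k-t_l)$ handles $\alpha=e_k+e_l$, $\beta=e_k-e_l$, and the $-1$ in $\cos t-1$ produces exactly the $-p^*\omega_0$ correction on the $\a\times\p[\lambda_k]$ pairs), so your outline would go through; but as written it is a correct, well-aimed plan rather than a complete proof.
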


\medskip
\begin{proof} 
We are going to show that $-dd_I^c \rho_I= \omega_I-p^*\omega_0$. By the $G$-invariance 
of the forms involved it is sufficient to prove the statement for
 $z=aK^\C$, with $a= \exp iH$ and $H \in \Omega'$ (see (\ref{REGULAR})).
 Observe that  \begin{equation}\label{OMEGA00}
 p^*\omega_0(\widetilde Z_z, \, \widetilde W_z)= \omega_0(X, \,U)=B(I_0X, \,U)\,,
 \end{equation}
 for every $Z=X+iY$ and $W=U+iV$ in $\p^\C$.
 Also recall that by Lemma \ref{DIFFER}, for every $X \in \p$ and $W\in \p^\C$
  one has
$$-dd_I^c \rho_I(\widetilde X_z, \, \widetilde W_z)=d\mu_I^X(\widetilde W_z)\,,$$
where the map $\mu_I^X$ can be computed as in Lemma \ref{MOMENTII}.

Fix  a point $z=aK^\C$ in the slice in $\Xi'$. We perform the computation exploiting the block
decomposition of $\p$ given in (\ref{DECO}) with the convention $\p[0]=\a$.
   
 \nmedskip
 {\boldmath \bf$\bullet \ \ -dd_I^c \rho_I= \omega_I-\pi^*\omega_0\ $ on
$\ {a_*\p \times a_*\p}$.}

\bigskip
\noindent
\nmedskip
For  $\,X,\,U \in \p$, by Lemma \ref{DIFFER} and Lemma \ref{MOMENTII} one has 
 $$
 \textstyle 
 -dd_I^c \rho_I(\widetilde X_z, \, \widetilde U_z)=
 \dds \mu_I^{X}( \exp sU aK^\C)= $$
 $$= \textstyle \frac{1}{2}
  \dds \sum_j\widetilde f_I (\lambda_j(H))
B (X-s[U,\,X] ,\,[I_0A_j,\,H])=$$
$$ \textstyle- \frac{1}{2}\sum_{j=1}^r\widetilde f_I (\lambda_j(H))
B  (X ,\,[[I_0A_j,\,H], U] )\,.$$
Moreover
$$B  (X ,\,[[I_0A_j,\,H], U] )=-
B  (X ,\,[[U, \,I_0A_j],\,H] +[[H,\, U],\,I_0A_j] )=$$
$$-B([H,\,X] ,\,[U,\,I_0A_j])-
B ([I_0A_j,\,X],\, [H, U] )=$$
$$
-B([H,\,X] ,\,[A_j,\,I_0U])+
B ([A_j,\,I_0X],\, [H, U] ) =$$
$$B ([A_j,\,[H,\,X]],\,I_0U)-
B  (I_0X,\, [A_j,\,[H,\,U]] )\,$$
which, for $X=P^\alpha \in \p[\alpha]$ and $U=Q^\beta \in \p[\beta]$, becomes 
$$ -\big ( \alpha(A_j)\alpha(H)+\beta(A_j)\beta(H) \big )
B  (I_0P^\alpha ,\,Q^\beta)\,.$$
Thus one obtains 
$$\textstyle
-dd_I^c \rho_I(\widetilde {P^\alpha}_z, \,\widetilde {Q^\beta}_z)=\frac{1}{2}\sum_j\widetilde f_I (\lambda_j(H)) 
\big ( \alpha(A_j)\alpha(H)+\beta(A_j)\beta(H) \big )
B  (I_0P^\alpha ,\,Q^\beta)\,.$$
It is clear that  
$-dd_I^c \rho_I(\widetilde {A}_z, \,\widetilde {A'}_z)
=0\,,$ for all $A, \,A'$ in $\a$.
 In view of relations (\ref{DECO}), (\ref{CPLX0}) and (\ref{CPLXBIS}),
 we are left to check the following  cases.

\bigskip
\noindent
 {\bf Case \boldmath $\,\alpha= 0\,$ and  $\,  \beta=\lambda_k$.}
 The above computation and our assumption on $\,f_I\,$ imply
  $$\textstyle -dd_I^c \rho_I(\widetilde {A}_z, \,\widetilde {P^k}_z)=
  \frac{1}{2}\widetilde f_I (\lambda_k(H)) 
2 \lambda_k(H) B (I_0A ,\,P^k )=
(\cos  \lambda_k(H)-1)B (I_0A ,\,P^k)=$$
$$=\omega_I(\widetilde {A}_z, \,\widetilde {P^k}_z)-p^*\omega_0(\widetilde {A}_z, \,\widetilde {P^k}_z)\,,
$$
where the last identity follows from  (\ref{OMEGA00}) and 
$$\cos \lambda_k(H)B  (I_0A ,\,P^k)=
B (I_0F_aA ,\,F_aP^k)=\omega_I(\widetilde {A}_z, \,\widetilde {P^k}_z)\,.$$

\medskip
\noindent
 {\bf \boldmath Case  $\, \alpha=e_k+e_l\,$ and  $\,  \beta= e_k-e_l$.} Since 
 $\,\lambda_k= \alpha+\beta\,$ and $\,\lambda_l= \alpha-\beta$,
 one has  
 $$
 \textstyle -dd_I^c \rho_I(\widetilde {P^\alpha}_z,
 \,\widetilde {Q^\beta}_z)=
  \frac{1}{2}\big(\widetilde f_I (\lambda_k(H)) (\alpha(H) + \beta(H))+\widetilde f_I (\lambda_l(H)) 
  (\alpha(H) - \beta(H))
  \big )  B \big (I_0P^\alpha ,\,Q^\beta\big)=$$
$$ \textstyle \frac{1}{2}\big(\widetilde f_I (\lambda_k(H)) \lambda_k(H)+\widetilde f_I (\lambda_l(H)) 
  \lambda_l(H)
  \big )  B  (I_0P^\alpha ,\,Q^\beta)=$$
  $$ \textstyle
\big (\frac{\cos  \lambda_k(H)+\cos  \lambda_l(H)}{2}-1\big )B (I_0P^\alpha ,\,Q^\beta)\,.$$
By the trigonometric identity
$$\cos  2t_k+\cos  2t_l= 2\cos (t_k +t_l) \cos (t_k-t_l)\,$$
one obtains
$$\textstyle \frac{1}{2}(\cos  \lambda_k(H)+\cos  \lambda_l(H))B (I_0P^\alpha ,\,Q^\beta)=
B  (I_0F_aP^\alpha ,\,F_aQ^\beta)=\omega_I(\widetilde {P^\alpha}_z, \,\widetilde {Q^\beta}_z)\,.$$
Hence 
$$ \textstyle -dd_I^c \rho_I(\widetilde {P^\alpha}_z,
 \,\widetilde {Q^\beta}_z)=\omega_I(\widetilde {P^\alpha}_z, \,\widetilde {Q^\beta}_z) - p^*\omega_0(\widetilde {P^\alpha}_z, \,\widetilde {Q^\beta}_z)\,.$$

\medskip
\noindent
 {\bf \boldmath Case $\, \alpha=e_k=\beta$.}
Since $\lambda_k= 2 \alpha$, one  has 
$$
\textstyle -dd_I^c \rho_I(\widetilde {P^\alpha}_z, \,\widetilde {Q^\alpha}_z)=
  \frac{1}{2}
  \widetilde f_I(\lambda_k(H) )\lambda_k(H) 
   B(I_0P^\alpha ,\,Q^\alpha)=
\frac{1}{2} \big(\cos  \lambda_k(H)-1\big )B  (I_0P^\alpha ,\,Q^\alpha)=$$
$$(\cos^2  \alpha(H)-1)B (I_0P^\alpha ,\,Q^\alpha)=
\omega_I(\widetilde {P^\alpha}_z, \,\widetilde {Q^\alpha}_z) - p^*\omega_0(\widetilde {P^\alpha}_z, \,\widetilde {Q^\alpha}_z)
$$
where in the last equality one uses 
$$\cos^2  \alpha(H)B (I_0P^\alpha ,\,Q^\alpha)=
B (I_0F_aP^\alpha ,\,F_aQ^\alpha)=\omega_I(\widetilde {P^\alpha}_z, \,\widetilde {Q^\alpha}_z)\,.$$

 \nmedskip
 { \boldmath \bf$\bullet \ \ -dd_I^c \rho_I= 0\ $ on
$\ {a_*\p \times a_*i\p}$.}

\nbigskip

Take $\,X\in\p\,$ and $\,iV \in i\p$. By Lemma   \ref{CURVESSTA}(i)
one has $\,\widetilde{iV}_z=
\dds \exp sC\exp i(H+sV_\a)K^\C$, with $\,C \in \k$. Thus
$$-dd^c\rho_I(\widetilde X_z,\,\widetilde{iV}_z)=
\textstyle
\dds \mu_I^X(\exp sC\exp i(H+sV_\a)K^\C) =$$
$$
\textstyle{1\over 2}
\dds \sum_j\widetilde f(\lambda_j(H+sV_\a))B(Ad_{\exp -sC}X,[I_0A_j,\,H+sV_\a])=$$
$$
\textstyle
{1\over 2}\sum_j \widetilde f'(\lambda_j(H)) \lambda_j(V_\a)B(X,\,[I_0A_j,\,H])
+\widetilde f(\lambda_j(H))
\big ( B([X,C],[I_0A_j,\,H])+ B(X,[I_0A_j,\,V_\a])\big)\,.$$
All the above summands are zero, since $\,[\k,\p]\subset \p$, $\,[\p,\p]\subset \k\,$ and $\,B(\p,\k)\equiv 0$.

\bn

 \nmedskip
{\boldmath \bf
 $\bullet \ \ -dd_I^c \rho_I= \omega_I\ $
 on
$\ {a_*i\p \times a_*i\p}$.}

 \bigskip
\noindent
 {\boldmath  \bf Case $\,\alpha=0=\beta$.}
 We show that 
$\,-dd_I^c \rho_I(\widetilde {iA}_z, \,\widetilde {iA'}_z)=0\,$ for every $\,A,\,A' \in \a$.
Since $\,[\widetilde {iA}, \,\widetilde {iA'}]=\widetilde{[{iA}, \,{iA'}]}=0$,  the usual formula for the exterior derivation
applied to the 1-form $\,d_I^c \rho_I\,$ yields
$$dd_I^c \rho_I(\widetilde {iA}_z, \,\widetilde {iA'}_z)=\widetilde {iA}_z(d_I^c \rho_I(\widetilde {iA'} ))-
\widetilde {iA'}_z(d_I^c \rho_I(\widetilde {iA} ))=$$
$$ \textstyle
=\ddt d\rho_I\big (I\dds \exp siA' \exp tiA aK^\C-I\dds \exp siA \exp tiA' aK^\C \big )=$$
$$=\textstyle
\ddt d\rho_I\big (\dds \exp siI_0A' a\exp tiA K^\C -\dds  \exp siI_0A a\exp tiA' K^\C \big )\,.$$
By Lemma \ref {CURVESSTA}(ii), for $\,A=A_k\,$ and $\,A'=A_l\,$ the above expression 
equals to 
$$
-\textstyle
\ddt \dds \rho_I \big ( \exp s  \frac {\cos \lambda_l(H+tA_k)}{\sin \lambda_l
(H+tA_k)} K^l \exp i(H+tA_k) K^\C \big ) $$
$$+ \textstyle \ddt
\dds \rho_I \big ( \exp s  \frac {\cos \lambda_k(H+tA_l)}
{\sin \lambda_k
(H+tA_l)} K^k  \exp  i(H+tA_l) K^\C \big )\,,$$
which vanishes by the $\,G$-invariance of $\,\rho_I$.

\bigskip
\noindent
 {\bf Case  \boldmath $ \, \alpha \not= 0\,$ and  $\,   \beta=0$.}
Take $\,iP^\alpha \in i\p$, with $\,\alpha \not=0$, and $A_k \in \a$. Then,
by Lemma \ref{CURVESSTA}(iii)
$$\textstyle 
-dd_I^c\rho_I(a_*iP^\alpha,\,a_*iA_k)=
{1\over \sin\alpha(H)}dd_I^c\rho_I((\widetilde{K^\alpha})_z,(\widetilde{A_k})_z)=$$
 $$
 \textstyle 
 -{1\over \sin\alpha(H)} \dds \mu_I^{K^\alpha}(\exp isA_k \, aK^\C)=$$
$$\textstyle 
-{1\over 2\sin\alpha(H)} \dds \sum_j
\widetilde f(\lambda_j(H+sA_k))B(K^\alpha,[I_0A_j,\,H+sA_k])=$$
$$\textstyle 
-{1\over 2\sin\alpha(H)}\sum_j\widetilde f'(\lambda_j(H))\lambda_j(A_k)B(K^\alpha,[I_0A_j,\,H])+\widetilde f(\lambda_j(H))B(K^\alpha,[I_0A_j,\,A_k])\,.
$$
Since $\lambda_j(A_k)=0=B(K^\alpha,[I_0A_j,\,A_k])$ for $j\not=k$, 
the above sum reduces to
$$
\textstyle 
-{1\over 2\sin\lambda_k(H)} \big ( \widetilde f'(\lambda_k(H))2 B(K^\alpha,\,[I_0A_k\,H])+
\widetilde f(\lambda_k(H))\big )B(K^\alpha,\,[I_0A_k\,A_k])=$$
$$=
\textstyle 
{1\over 2\sin\lambda_k(H)} \big ( \widetilde f'(\lambda_k(H))2\alpha(H) +\widetilde f(\lambda_k(H))
\alpha(A_k) \big )B(I_0P^\alpha,\,\,A_k).$$
This expression vanishes for $\alpha \not= \lambda_k$, while for $P^\alpha = P^k \in \p[\lambda_k]$
becomes
$$
\textstyle 
{1\over \sin\lambda_k(H)} \big ( \widetilde f'(\lambda_k(H))\lambda_k(H) +\widetilde f(\lambda_k(H))
 \big )B(I_0P^k,\,\,A_k).$$
Under our assumption $\,\widetilde f(t)= \frac {1}{t}(\cos t -1),$ one has
$\,\widetilde f'(t)= -\frac {1}{t^2}(\cos t -1) -\frac {\sin t}{t}$, and consequently
$$\textstyle 
{1\over \sin t}\big ( \widetilde f'(t)t +\widetilde f(t) \big )=-1\,.$$
Thus
$$
\textstyle 
-dd_I^c\rho_I(a_*iP^\alpha,\,a_*iA_k)=
-B(I_0P^k,\,A_k)=\omega_I(a_*iP^k,\,a_*iA_k)\,.$$

\bigskip
\noindent
 {\bf \boldmath Case  $\, \alpha\not=0\not=\beta$.}
 Next, consider  $\,iP^\alpha,\,iQ^\beta \in i\p$, with $\,\alpha \not=0\not=\beta$.
By Lemma \ref{CURVESSTA}(i) one has
$\, a_*iP^\alpha=-\textstyle \frac{1}{\sin \alpha(H)} \widetilde K^\alpha\,$ and
$\, a_*iQ^\beta=-\textstyle \frac{1}{\sin \beta(H)} \widetilde C^\beta\,,$
for appropriate $K^\alpha \in \k[\alpha]$  and $C^\beta \in \k[\beta]$. 
Then
$$-dd_I^c\rho_I(a_*iP^\alpha,\,a_*iQ^\beta)=\textstyle
\frac{1}{\sin\alpha(H) \sin \beta(H)}
\dds \mu_I^{K^\alpha}( \exp sC^\beta \, aK^\C)=$$
$$ \textstyle
-\frac{1}{2\sin\alpha(H) \sin \beta(H)}\sum_j \widetilde f (\lambda_j(H))
B \big ([C^\beta,\,K^\alpha] ,\,[I_0A_j,\,H] \big)\,=$$
$$ \textstyle
=-\frac{1}{2\sin\alpha(H) \sin \beta(H)}\sum_j \widetilde f (\lambda_j(H))
B \big (K^\alpha ,\,[[I_0A_j,\,H],\,C^\beta] \big)\,.$$

By writing
$$
B \big (K^\alpha ,\,[[I_0A_j,\,H],\,C^\beta] \big)=-B\big (K^\alpha ,\,[[C^\beta,\,I_0A_j],\,H]+[[H,\,C^\beta],\,I_0A_j]  \big)=$$
$$-\big (\alpha(H)\beta(A_j)+
\beta(H) \alpha(A_j)\big) B\big (I_0P^\alpha,\, Q^\beta  \big)\,$$
one  obtains 
$$-dd_I^c\rho_I(a_*iP^\alpha,\,a_*iQ^\beta)=$$
$$\textstyle \frac{1}{2 \sin \alpha(H)\sin\beta(H)}\sum_j\widetilde f(\lambda_j(H))\big(
\alpha(H)\beta(A_j)+\beta(H)\alpha(A_j)\big) B(I_0P^\alpha,\,Q^\beta).$$
In view of relations (\ref{DECO}), (\ref{CPLX0}) and (\ref{CPLXBIS}),
 we are left to check the following  cases.

\bigskip
\noindent
 {\bf \boldmath Case $\, \alpha=e_k+e_l\ $ and  $\  \beta= e_k-e_l$.} Since
  $\, \alpha +\beta=\lambda_k\,$ and  $\,-\alpha+\beta=-\lambda_l$, one has
  $$-dd_I^c\rho_I(a_*iP^\alpha,\,a_*iQ^\beta)=
  \textstyle
  {1\over {2\sin\alpha(H) \sin \beta(H)}} \big (\widetilde f(\lambda_k(H))\lambda_k(H)-\widetilde f(\lambda_l(H))\lambda_l(H)
\big )
B(I_0P^\alpha,\,Q^\beta)$$
$$ 
\textstyle
\frac{ \cos\lambda_k(H)-\cos\lambda_l(H)}{2\sin\alpha(H) \sin \beta(H)}B(I_0P^\beta,\, Q^\beta)=-B(I_0P^\beta,\,Q^\beta) =\omega_I(a_* iP^\alpha,\,a_*iQ^\beta),$$
due to the trigonometric  identity
 $\,\cos 2t_k-\cos 2t_l=-2\sin(t_k+t_l)\sin(t_k-t_l)$. 

\medskip
\noindent
 {\bf \boldmath Case  $\, \alpha=e_k=\beta$.}
One has
$$-dd_I^c\rho_I(a_*iP^\alpha,\,a_*iQ^\alpha)=
\textstyle
{1\over {2\sin^2\alpha(H)}} \widetilde f(\lambda_k(H))\lambda_k(H)B(I_0P^\alpha,\,Q^\alpha)$$
 $$\textstyle= {1\over { 2\sin^2\alpha(H)}}(\cos2\alpha(H)-1)B(I_0P^\alpha,Q^\alpha)$$
 $$=-B(I_0P^\alpha,\, Q^\alpha)=\omega_I(a_* iP^\alpha,\,a_*iQ^\alpha)\,.$$
\end{proof}


\smallskip
\section{Appendix A: a proof of uniqueness of the adapted hyper-K\"ahler structure for $\,G=SL_2(\R)$.}
\label{UNIQUESL2}

\bigskip
Here we carry out a proof of uniqueness of the adapted hyper-K\"ahler
structure in the case of $\,G=SL_2(\R)$, as announced in 
Section \ref{PROOF}.

Consider the map
$\,\p \times \k \times \Omega^+ 
\to \Xi$, given by $\,(U,C,H) \to \exp U \exp C \exp iH K^\C$, 
which is an analytic diffeomorphism of a neighborhood
of $\,\{0\}\times \{0\}\times \Omega^+\,$  onto its image $\,\Omega''\,$
(cf. \cite{KrSt05}, Cor. 4.2\,). On $\,\Omega''\,$ we consider the vector fields
$$\textstyle \widecheck A_{ \exp U \exp C \exp iHK^\C} :=
{d\over ds}\Big |_{s=0} \exp (U+sA) \exp C \exp iH K^\C \,,$$
$$\textstyle \widecheck {P}_{\exp U \exp C \exp iHK^\C}: = {d\over ds}\Big |_{s=0} \exp (U+sP) \exp C \exp iH K^\C\,,$$
$$\textstyle \widecheck {K}_{ \exp U \exp C \exp iHK^\C}: = {d\over ds}\Big |_{s=0} \exp U \exp (C+sK)
\exp iH
 K^\C \ ,$$
$$\textstyle \widecheck {iA}_{ \exp U \exp C \exp iHK^\C}: = {d\over ds}\Big |_{s=0} \exp (U) \exp C \exp i(H+sA) K^\C\,,$$

\sn
where  $\,A:=[\theta  E,\, E]$, $\,P=E - \theta E\,$ and $\,K=E + \theta E\,$.
In particular  $\,I_0A=-P$, $\,I_0P=A\,$. Moreover
$\,[A,\,K]=\alpha(A)P=2P\,$ and $\,[A,\,P]=\alpha(A)K=2K\,$   (see (\ref{NORMALIZ1}), (\ref{KJPJ}) and (\ref{CPLX0})). 
All above  vector fields commute, since they are push-forward of coordinate vector fields in the product
$\,\p \times \k \times \Omega^+ $.

\bigskip
\noindent
{\bf Proof of  uniqueness of the adapted hyper-K\"ahler structure} ($\,$case of $\,G=SL_2(\R)\,$). 
Let
$$({\mathcal I},\,{\mathcal J},\,{\mathcal K},\,\omega_{\mathcal I},\,\omega_{\mathcal J},\, \omega_{\mathcal K})$$
 be an
arbitrary 
 $\,G$-invariant hyper-K\"ahler structure  with the property that $\,{\mathcal J}=J_{ad}\,$ and the restriction of the K\"ahler structure
 $\,({\mathcal I},\,\omega_{\mathcal I})\,$ to $\,\p\,$ coincides with the standard K\"ahler structure $\,(I_0,\,\omega_0)\,$ of $\,G/K$. 
Consider the map  $\,\overline L:\Omega \to GL_\R(\p^\C)\, $ which describes  $\,{\mathcal I}\,$
 along the slice by $${\mathcal I}a_*Z=a_*\overline L_H  Z\,,$$ 
 where $\,H\in\Omega\,$ and $\,a= \exp iH$. 
 As observed in the proof of the main Theorem in Section \ref{PROOF}, we need to show that  
for every $\,H\,$ in $\,\Omega\,$ and $\,Z \in \p^\C$ one has $\,\overline L_H  Z=F_aI_0F_a^{-1}\overline Z$.
Note that 
\begin{equation}
\label{OMEGAL}
\omega_{\mathcal J}(a_*Z \,, \, a_*W)=\omega_{\mathcal I}({\mathcal J}a_*Z \,, \, {\mathcal I}a_*W)\,
 =-{\rm Im}B(I_0Z,\,\overline L_HW )\,.
\end{equation}

\nmedskip
{\it Claim.}  With respect to the  basis 
$$\{A,\,P,\,iA,\,iP \}$$
of $\,\p^\C$,
the anti-linear anti-involution $\,\overline L_H\,$ of $\,\p\,$ is represented by the matrix
$$
 \begin{pmatrix}
 a_1&\ a_2&b_1&\ 0 \cr
a_3&-a_1&0&\ b_1\cr
b_1&0&-a_1&-a_2\cr
0&b_1&-a_3&a_1
 \end{pmatrix}\,,$$
where $\,a_1$, $\,a_2$, $\,a_3\,$ and $\,b_1\,$ are real-analytic functions of 
 $\,H\,$ and $\, b_1^2+a_1^2+a_2a_3 =-1$.

\nmedskip
{\it Proof of the claim.}
Let
$$
 \begin{pmatrix}
 A&B \cr
C& D \end{pmatrix}\,$$

\nmedskip
be the representative matrix of $\,L_H\,$ with respect to
the above basis, which is compatible with  the decomposition
$\,\p \oplus i\p$. Since $\,{\mathcal I}{\mathcal J}=-{\mathcal J}{\mathcal I}\,$, it follows that
$\, \overline L_H {\mathcal J}Z=-{\mathcal J}\overline L_HZ\,$ for
every $\,Z\,$ in $\,\p$, i.e. $\,\overline L_H\,$ is anti-linear. 
This implies that $\,C=B\,$ and $\,D=-A$, where 
$$A=  \begin{pmatrix}
a_1 & a_2\cr
a_3 & a_4 \end{pmatrix}\, \qquad {\rm and}\qquad B= \begin{pmatrix}  b_1 & b_2 \cr
b_3& b_4 \end{pmatrix}\,.$$
Since 
 $\,\omega_{\mathcal J}(\, \cdot \, , \,\, \cdot \, )$ is skew-symmetric, (\ref{OMEGAL}) implies that 
$${\rm Im}B(I_0Z,\,\overline L_HW )=
-{\rm Im}B(I_0W,\,\overline L_HZ )=-{\rm Im}B(\overline L_HZ,\,I_0W )$$
for every $Z,\,W \in \p^\C$. 
As  $\ {}^t\! I_0 = -I_0$, one obtains
$$ -\begin{pmatrix}
 I_0& 0\cr
0& I_0
\end{pmatrix}
  \begin{pmatrix}
 0&Id \cr
Id& \ 0
\end{pmatrix}
 \begin{pmatrix}
 A&B \cr
B& -A \end{pmatrix}=-
 \begin{pmatrix}
{}^t\! A& {}^t B \cr
{}^t\! B& {-}^t\! A
\end{pmatrix}
\begin{pmatrix}
 0&Id \cr
Id& \ 0
\end{pmatrix}
\begin{pmatrix}
 I_0&0\cr
0& I_0
\end{pmatrix}
\,,$$
which implies 
$\,I_0A= {-}^t AI_0\,$ and $\,I_0B= {}^t BI_0$.
Thus the matrix realization of $\,\overline L_H\,$ is as  claimed and the relation 
$\, b_1^2+a_1^2+a_2a_3 =-1\,$ follows from the fact that 
$\,(\overline L_H)^2=-Id$. This concludes the proof of the claim.

\medskip
Then  in order to conclude the proof,
we need to show that the functions $\,a_1\,$ and $\,b_1\,$ identically vanish and $\,a_3(H)= -\cos \alpha(H)$\,
(recall that $I_0A=-P$ and $I_0P=A$). This will be done by showing that such functions
are solutions of a system of differential equations with initial conditions
$a_1(0)=0=b_1(0)$, $\,a_3(0)=-1$.  Without loss of generality, in the sequel
 we assume that the Killing form $\,B\,$ is normalized by  $\,B(A,A)=B(P,P)=1$.

\nsmallskip
\noindent
 {\bf \boldmath  $\bullet \ \,b_1\equiv 0$.}
Let $\,z=aK^\C \in \Xi''$, with $\,a=\exp iH$. 
Since the vector fields
$\,\widecheck A$, $\,\widecheck P$, $\,\widecheck {iA}\,$ commute
 and $\,\omega_{\mathcal J}\,$ is closed, 
the classical Cartan's formula gives
$$
d\omega_{\mathcal J}(\widecheck A_z,\,\widecheck P_z,\,\widecheck {iA}_z)=
\widecheck A_z\omega_{\mathcal J}(\widecheck P,\,\widecheck {iA})- \widecheck P_z\omega_{\mathcal J}(\widecheck A,\,\widecheck {iA} )+
\widecheck {iA}_z\omega_{\mathcal J}(\widecheck A,\,\widecheck P) 
=0\,.
$$
One has
$$ \textstyle
d\omega_{\mathcal J}(\widecheck A_z,\,\widecheck P_z,\,\widecheck {iA}_z)
=\ddt \omega_{\mathcal J}(\widecheck P_{\exp tAaK^\C},\,\widecheck {iA}_{\exp tAaK^\C})+$$
$$\textstyle
- \ddt \omega_{\mathcal J}(\widecheck A_{\exp tPaK^\C},\,\widecheck {iA}_{\exp tPaK^\C} )+
\ddt \omega_{\mathcal J}(\widecheck A_{\exp i(H+tA)K^\C} ,\,\widecheck P_{\exp i(H+tA)K^\C}) =
$$
\medskip
$$= \textstyle \ddt \omega_{\mathcal J}(\dds \exp (tA+sP)aK^\C,\,\dds \exp tA\exp i(H+sA)K^\C)+$$
$$\textstyle
-\ddt \omega_{\mathcal J}(\dds \exp (tP+sA)aK^\C,\,(\exp tPa)_*iA )+$$
$$+ \textstyle 
\ddt \omega_{\mathcal J}(\exp i(H+tA)_*A ,\,\dds \exp s P \exp i(H+tA)K^\C) =
$$
\medskip
$$= \textstyle \ddt \omega_{\mathcal J}(\dds {\exp tA\exp s(P-\frac{t}{2}[A,P] + O(t^2))aK^\C},\,(\exp tAa)_*iA )+$$
$$\textstyle
-\ddt \omega_{\mathcal J}(\dds \exp tP\exp s(A-\frac{t}{2}[P,A] + O(t^2))aK^\C,\,(\exp tPa)_*iA )+$$
$$+ \textstyle 
\ddt \omega_{\mathcal J}(\exp i(H+tA)_*A ,\,\exp i(H+tA)_*\cos \alpha(H+tA)P) 
$$
which, by (\ref{OMEGAL}), gives
$$ \textstyle \ddt \omega_{\mathcal J}(\dds \exp s(-t\alpha(A)K)aK^\C,\,a_*iA )+$$

$$-\textstyle 
\ddt \cos \alpha(H+tA){\rm Im}B(I_0A ,\,\overline L_{H+tA}P)=
$$
\medskip
$$= \textstyle  \omega_{\mathcal J}(a_* \alpha(A) \sin \alpha(H)iP,\,a_*iA )+
\ddt \cos \alpha(H+tA)b_1(H+tA)=
$$
\medskip
$$= - \textstyle \alpha(A) \sin \alpha(H){\rm Im}B(I_0iP,\,\overline L_HiA )+
\ddt \cos \alpha(H+tA)b_1(H+tA)=
$$
\medskip
$$= - \textstyle \alpha(A) \sin \alpha(H){\rm Im}B(A,\,\overline L_H A )+
\ddt \cos \alpha(H+tA)b_1(H+tA)=
$$
\medskip
$$= - \textstyle \alpha(A) \sin \alpha(H)b_1(H)+
\ddt \cos \alpha(H+tA)b_1(H+tA)=
$$
\medskip
$$= -2 \textstyle \alpha(A) \sin \alpha(H)b_1(H)+
\cos \alpha(H)\ddt b_1(H+tA)=0\,.
$$
Equivalently
$$
\ddt b_1(H+tA)=2 \textstyle \frac{\alpha(A) \sin \alpha(H)}
{\cos \alpha(H)}b_1(H)\,.
$$
The solution of this differential equation is 
$\,b_1(H)=ce^{-2 log \cos \alpha(H)}= \frac{c}{\cos^2 \alpha(H)}$, where $\,c\,$ is a real constant.
The initial condition
$\,b_1(0)=0$ forces
$\,c=0\,$ and consequently
 $\,b_1 \equiv 0$.

\nsmallskip
 {\bf \boldmath  $\bullet \ \,a_1\equiv 0$.} In this case we choose the vector fields
$\,\widecheck A$, $\,\widecheck K$ and $\,\widecheck {iA}$. One has 
$$ \textstyle
d\omega_{\mathcal J}(\widecheck A_z,\,\widecheck K_z,\,\widecheck {iA}_z)
=\ddt \omega_{\mathcal J}(\widecheck K_{\exp tAaK^\C},\,\widecheck {iA}_{\exp tAaK^\C})+$$
$$\textstyle
- \ddt \omega_{\mathcal J}(\widecheck A_{\exp tKaK^\C},\,\widecheck {iA}_{\exp tKaK^\C} )+
\ddt \omega_{\mathcal J}(\widecheck A_{\exp i(H+tA)K^\C} ,\,\widecheck K_{\exp i(H+tA)K^\C}) 
\,.$$

\smallskip
\noindent
The first term on the right-hand side of the equal sign vanishes by the $\,G$-invariance of $\,\omega_{\mathcal J}$.
Thus one obtains

\smallskip
$$ -\textstyle \ddt \omega_{\mathcal J}(\dds \exp sA{\exp tKaK^\C},\,(\exp tKa)_*iA )+$$
$$\textstyle 
\ddt \omega_{\mathcal J}(\exp i(H+tA)_*A ,\,\dds \exp s K \exp i(H+tA)K^\C) =
$$
\medskip
$$= -\textstyle \ddt \omega_{\mathcal J}(\dds {\exp tK\exp s(A-t[K,\,A] + O(t^2))aK^\C},\,(\exp tKa)_*iA )+$$
$$ \textstyle 
\ddt \omega_{\mathcal J}(\exp i(H+tA)_*A ,\,-\exp i(H+tA)_*\sin \alpha(H+tA)iP) =
$$
\medskip
$$= -\textstyle \ddt \omega_{\mathcal J}(a_*(A+t\alpha(A)F_aP + O(t^2)),\,a_*iA )+$$
$$\textstyle 
\ddt \sin \alpha(H+tA){\rm Im}B(I_0A ,\,\overline L_{H+tA}iP) =
$$
\medskip
$$=  - \textstyle\omega_{\mathcal J}(a_*\alpha(A)F_aP,\,a_*iA )+ 
\ddt \sin \alpha(H+tA){\rm Re}B(P ,\,\overline L_{H+tA}P) =
$$
\medskip
$$=  \textstyle \alpha(A)\cos \alpha(H) {\rm Im}B(I_0P,\,\overline L_H iA )-
\ddt \sin \alpha(H+tA)a_1(H+tA)  =
$$
\medskip
$$=  -\textstyle \alpha(A)\cos \alpha(H) {\rm Re}B(A,\,\overline L_H A )-
\ddt \sin \alpha(H+tA)a_1(H+tA)  =
$$
\medskip
$$=  \textstyle -\alpha(A)\cos \alpha(H) a_1(H)- 
\ddt \sin \alpha(H+tA)a_1(H+tA) =$$
\medskip
$$  =\textstyle -2\alpha(A)\cos \alpha(H) a_1(H)- 
\sin \alpha(H)\ddt a_1(H+tA) =0\,.$$
Equivalently
$$  \textstyle  
\ddt a_1(H+tA) =-\frac{2\alpha(A)\cos \alpha(H)}{\sin \alpha(H)} a_1(H)\,.$$
\medskip
For $\,H\not=0\,$ the solution of this differential equation is 
$\,a_1(H)=ce^{-2log \sin \alpha(H)}= \frac{c}{\sin^2 \alpha(H)}\,$ and, due to the initial condition
$\,a_1(0)=0$,
one has $\,\lim_{H\to 0}a_1(H)=0$. Hence $\,c=0\,$ and 
 $\,a_1 \equiv 0$.

\nsmallskip
 {\bf \boldmath  $\bullet \ \,a_3(H)= -\cos \alpha(H)$.} For this choose
the vector fields $\,\widecheck P$, $\,\widecheck K\,$ and $\,\widecheck {iA}$.
One has 
$$ \textstyle
d\omega_{\mathcal J}(\widecheck P_z,\,\widecheck K_z,\,\widecheck {iA}_z)
=\ddt \omega_{\mathcal J}(\widecheck K_{\exp tPaK^\C},\,\widecheck {iA}_{\exp tPaK^\C})+$$
$$\textstyle
- \ddt \omega_{\mathcal J}(\widecheck P_{\exp tKaK^\C},\,\widecheck {iA}_{\exp tKaK^\C} )+
\ddt \omega_{\mathcal J}(\widecheck P_{\exp i(H+tA)K^\C} ,\,\widecheck K_{\exp i(H+tA)K^\C})\,,
$$
where the first term on the right-hand side of the equal sign vanishes by the $G$-invariance of 
$\omega_{\mathcal J}$. Thus one obtains
$$\textstyle -\ddt \omega_{\mathcal J}(\dds \exp sP \exp tK aK^\C,\,\dds \exp tK \exp i(H+sA)K^\C)+$$
$$+ \textstyle 
\ddt \omega_{\mathcal J}(\dds \exp s P \exp i(H+tA)K^\C ,\,\dds \exp s K \exp i(H+tA)K^\C) =
$$
\medskip
$$= -\textstyle \ddt \omega_{\mathcal J}(\dds {\exp tK\exp s(P-t[K,P] + O(t^2))aK^\C},\,(\exp tKa)_*iA )+$$
$$+ \textstyle 
\ddt \omega_{\mathcal J}(\exp i(H+tA)_*\cos \alpha(H+tA)P ,\,
-\exp i(H+tA)_*\sin \alpha(H+tA)iP) =
$$
(recall that $[K,P]=2A$)
$$=\textstyle \omega_{\mathcal J}(2a_*A,\,a_*iA )$$
$$\textstyle -
\ddt \cos \alpha(H+tA)\sin \alpha(H+tA)\omega_{\mathcal J}(\exp i(H+tA)_*P ,\,
\exp i(H+tA)_*iP) =
$$
$$= \textstyle -2{\rm Im}B(I_0A, \overline L_H iA )+
\ddt \cos \alpha(H+tA)\sin \alpha(H+tA){\rm Im}B(I_0P ,\,
\overline L_{H+tA}iP) =
$$
$$= \textstyle -2{\rm Re}B(P, \overline L_H A )-
\ddt \cos \alpha(H+tA)\sin \alpha(H+tA){\rm Re}B(A ,\,
\overline L_{H+tA}P) =
$$
$$= -\textstyle 2a_3(H)-
\ddt \cos \alpha(H+tA)\sin \alpha(H+tA)a_2(H+tA) =
$$
$$= -\textstyle 2a_3(H)+
\ddt \frac {\cos \alpha(H+tA)}{a_3(H+tA)}\sin \alpha(H+tA) =0\,.
$$
For the last equality we use that, since $\,a_1=b_1 \equiv 0$,
one has $\,a_2=-\frac{1}{a_3}$ (see claim).
Due to the initial condition $\,a_3(0)= -1\,$ and  the fact that $\,\alpha(A)=2$,
it follows that $\,a_3(H)=- \cos \alpha(H)$. This concludes the proof. 
\qed


\bigskip
\section{Appendix B: the canonical K\"ahler form and its potential}
\label{STANDA}

\bigskip
Define  $\,\rho_{can}:\Xi \to \R\,$ by 
$$\,\textstyle \rho_{can}(gaK^\C):= \frac{1}{2}B(H,H)\,,$$
for $\,gaK^\C \in \Xi\,$ with
$\,a= \exp iH$,
and set
 $\,\omega_{can}=-dd_J^c \rho_{can}$, where $\,J=J_{ad}$. As  mentioned
in the introduction,  $\,\Xi\,$ can be thought as a $\,G$-invariant domain in 
the cotangent bundle $\,T^*G/K$. In this realization, from the results in 
\cite{GuSt91} and \cite{LeSz91} (see also \cite{Sz91}),  it follows that
$\,\omega_{can}\,$ coincides with  the 
canonical real symplectic form on~$\,T^*G/K$. 

An analogous computation as in Proposition \ref{POTENTIAL} 
gives the following Lie group theoretic realization of $\,\omega_{can}\,$ and of 
the associated moment map on  $\,\Xi \subset G^\C/K^\C$.
 
\bigskip
\begin{prop}
\label{POTENTIAL2}
The  function  $\,\rho_{can}\,$  is a $G$-invariant potential of the
canonical symplectic 
form,
determined by  
$$\,\omega_{can}(\widetilde Z_{aK^\C},\,\widetilde W_{aK^\C}):=
-{\rm Im}B\big (Z,\, E_a^{-1}F_a\overline W \big )\,,$$
for $Z,\,W \in \p^\C$.
Equivalently,
$$\,\omega_{can}(a_*Z,\,a_*W)=
-{\rm Im}B\big (F_a^{-1}Z,\, E_a^{-1}\overline W \big )\,.$$
The moment map $\,\mu_{can}:\Xi \to \g^*\,$ associated with $\,\rho_{can}\,$ is given by 
$$\,\mu_{can}(gaK^\C)(X) =B \big (\Ad_{g^{-1}} X ,\,H \big )\,.$$
\end{prop}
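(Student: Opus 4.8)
The plan is to mirror the proof of Proposition \ref{POTENTIAL} almost verbatim, replacing the function $\,\rho_J\,$ by $\,\rho_{can}\,$ and, correspondingly, the map $\,\Psi\,$ by the identity of $\,\p$. First I would verify that $\,\rho_{can}\,$ is well defined and $\,G$-invariant: if $\,gaK^\C=g'a'K^\C\,$, then by \cite{KrSt05}, Prop.~4.1, the slice coordinates $\,H,H'\,$ are related by an element of $\,N_K(\a)$, whose adjoint action on $\,\a\,$ is a restricted Weyl group element and hence a $\,B$-isometry; thus $\,B(H,H)=B(H',H')\,$ and $\,\rho_{can}\,$ descends to a $\,G$-invariant function on $\,\Xi$. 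Note that, unlike $\,\rho_J$, the function $\,\rho_{can}\,$ is \emph{not} plurisubharmonic (the canonical form is symplectic but not positive), so the computation must proceed through Lemma \ref{DIFFER}, which requires neither plurisubharmonicity nor the strict-convexity input of \cite{BHH03}.

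The first substantial step is to compute $\,\mu_{can}^X(z)=d^c_J\rho_{can}(\widetilde X_z)\,$ for $\,z=aK^\C\,$ with $\,a=\exp iH\,$ and $\,H\in\Omega'$. For $\,X\in\p\,$, using $\,J\widetilde X_z=\widetilde{iX}_z\,$ together with Lemma \ref{CURVESSTA}(i) and the $\,G$-invariance of $\,\rho_{can}$, one reduces to $\,\frac{d}{ds}\big|_{s=0}\frac{1}{2}B(H+sX_\a,H+sX_\a)=B(X_\a,H)=B(X,H)$, the last equality because $\,\a\perp_B\p[\alpha]$. For $\,X\in\k\,$ the same manipulation as in Proposition \ref{POTENTIAL} — writing $\,\exp siX\,aK^\C=\exp\!\big(s\sum_\alpha\frac{\sin\alpha(H)}{\cos\alpha(H)}P^\alpha\big)aK^\C\,$ and invoking $\,G$-invariance — gives $\,d^c_J\rho_{can}(\widetilde X_z)=0=B(X,H)$. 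Together with the $\,G$-invariance of $\,\rho_{can}\,$ and of $\,J$, this yields the moment map $\,\mu_{can}(gaK^\C)(X)=B(\Ad_{g^{-1}}X,H)$.

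Next I would apply Lemma \ref{DIFFER} to obtain $\,\omega_{can}(\widetilde X_z,\widetilde W_z)=d\mu_{can}^X(\widetilde W_z)\,$ for $\,X\in\p\,$ and $\,W=U+iV\in\p^\C$. The summand $\,d\mu_{can}^X(\widetilde U_z)\,$ vanishes since $\,[U,X]\in\k\perp_B\a$, exactly as before. For the summand $\,d\mu_{can}^X(\widetilde{iV}_z)$, Lemma \ref{CURVESSTA}(i) gives $\,\widetilde{iV}_z=\frac{d}{ds}\big|_{s=0}\exp sC\exp i(H+sV_\a)K^\C\,$ with $\,C=-\sum_\alpha\frac{\cos\alpha(H)}{\sin\alpha(H)}K^\alpha$, and differentiating $\,B(\Ad_{\exp(-sC)}X,\,H+sV_\a)\,$ produces $\,B\big(X,\,V_\a+[C,H]\big)$. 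The computational heart of the argument is the identity $\,V_\a+[C,H]=E_a^{-1}F_aV$, which follows from $\,[H,K^\alpha]=\alpha(H)Q^\alpha\,$ (Lemma \ref{BASIS}) together with the eigenvalue relations (\ref{EFFE}) and (\ref{E}); it is the exact analogue of the relation $\,(\Psi_*)_HV_\a+[C,\Psi(H)]=(\Psi_*)_HE_a^{-1}F_aV\,$ used in Proposition \ref{POTENTIAL}, specialised to $\,\Psi={\rm id}$. Hence $\,\omega_{can}(\widetilde X_z,\widetilde W_z)=B(X,E_a^{-1}F_aV)$.

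Finally, since $\,\omega_{can}=-dd^c_J\rho_{can}=2i\partial\bar\partial_J\rho_{can}\,$ is a real $\,(1,1)$-form it is $\,J$-invariant, so writing $\,Z=X+iY\,$ and using $\,\omega_{can}(\widetilde{iY}_z,\widetilde W_z)=-\omega_{can}(\widetilde Y_z,\widetilde{iW}_z)\,$ upgrades the previous formula to $\,\omega_{can}(\widetilde Z_z,\widetilde W_z)=-{\rm Im}\,B(Z,E_a^{-1}F_a\overline W)\,$ for all $\,Z,W\in\p^\C$. The equivalent expression then follows from $\,\widetilde Z_z=a_*F_aZ\,$ and the reality of $\,F_a$, which gives $\,E_a^{-1}F_a\overline{F_a^{-1}W}=E_a^{-1}\overline W$. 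Both identities are established on the dense regular set $\,\Xi'\,$ where Lemma \ref{CURVESSTA} applies, and extend to all of $\,\Xi\,$ by real-analyticity. The main obstacle is bookkeeping rather than conceptual: one must keep careful track of the conjugations and of the systematic passage $\,\Psi\to{\rm id}$, and verify the bracket identity $\,V_\a+[C,H]=E_a^{-1}F_aV\,$ cleanly; it is precisely the non-plurisubharmonicity of $\,\rho_{can}\,$ that forces the use of Lemma \ref{DIFFER} in place of the moment-map/strict-convexity route of Proposition \ref{POTENTIAL}.
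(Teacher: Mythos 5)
Your proposal is correct and is exactly the computation the paper intends: the paper gives no separate proof of Proposition \ref{POTENTIAL2}, stating only that it follows from ``an analogous computation as in Proposition \ref{POTENTIAL}'', and your specialisation $\Psi\to{\rm id}$, the key identity $V_\a+[C,H]=E_a^{-1}F_aV$, and the passage through Lemma \ref{DIFFER} on $\Xi'$ followed by real-analytic continuation reproduce that computation faithfully. The one misstatement is your aside that $\rho_{can}$ is not plurisubharmonic: since $H\mapsto\frac{1}{2}B(H,H)$ is strictly convex on $\Omega$ (as $B|_{\a}$ is positive definite), the same appeal to Theorem 10 of \cite{BHH03} used for $\rho_J$ shows that $\rho_{can}$ \emph{is} strictly plurisubharmonic with respect to $J_{ad}$ --- this is precisely the Guillemin--Stenzel/Lempert--Sz\H{o}ke picture in which $\omega_{can}$ is K\"ahler for the adapted complex structure --- but this error is harmless, since Lemma \ref{DIFFER} requires no plurisubharmonicity and your computation does not use the claim.
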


\bigskip
\begin{remark}\label
{PULLSIMPL}
By means of Lemma \ref{FORMS2}$\,($i$\,)$ and Proposition \ref{POTENTIAL2}, one can check that the form $\,\omega_J\,$ is the
pull-back of  $\,\omega_{can}$ via the $\,G$-equivariant map $\,\psi\,$ defined in Section \ref{DEFORMATION}. $($cf. Rem. \ref{PULLI} and   \cite{DaSz97}, Thm.$\,$4.1$)$.
\end{remark}


\medskip
\bigskip

\end{document}